\DeclareMathAlphabet{\mathsf}{OT1}{\sfdefault}{m}{n}
\SetMathAlphabet{\mathsf}{bold}{OT1}{\sfdefault}{b}{n}
\DeclareMathAlphabet{\mathfrak}{U}{jkpmia}{m}{it}
\SetMathAlphabet{\mathfrak}{bold}{U}{jkpmia}{bx}{it}
\numberwithin{equation}{section}
\definecolor{grey_pers}{RGB}{69 90 100}
\definecolor{WIMgreen}{RGB}{60 134 132}
\definecolor{red_pers}{RGB}{204 37 41}
\definecolor{UMblue}{RGB}{4 47 86}
\definecolor{myteal}{RGB}{0 123 137}
\definecolor{dartmouthgreen}{rgb}{0.05, 0.5, 0.06}\definecolor{cobalt}{rgb}{0.0, 0.28, 0.67}\definecolor{coolblack}{rgb}{0.0, 0.18, 0.39}
\definecolor{glaucous}{rgb}{0.38, 0.51, 0.71}\definecolor{hooker\'sgreen}{rgb}{0.0, 0.44, 0.0}\definecolor{lemonchiffon}{rgb}{1.0, 0.98, 0.8}\definecolor{oucrimsonred}{rgb}{0.6, 0.0, 0.0}\definecolor{radicalred}{rgb}{1.0, 0.21, 0.37}\definecolor{raspberry}{rgb}{0.89, 0.04, 0.36}\definecolor{royalazure}{rgb}{0.0, 0.22, 0.66}
\definecolor{Purple}{RGB}{103 58 183}
\definecolor{Green}{rgb}{.13 .55 .13}
\theoremstyle{plain}
\newtheorem{theorem}{Theorem}[section]
\newtheorem{proposition}[theorem]{Proposition}
\newtheorem{lemma}[theorem]{Lemma}
\newtheorem{corollary}[theorem]{Corollary}
\theoremstyle{definition}
\newtheorem{definition}[theorem]{Definition}
\theoremstyle{assumption}
\newtheorem{assumption}[theorem]{Assumption}
\theoremstyle{remark}
\newtheorem{remark}[theorem]{Remark}
\def\supp{\operatorname{supp}}
\def\E{\mathbb{E}}
\def\N{\mathbb{N}}
\def\N{\mathbb{N}}
\def\R{\mathbb{R}}
\definecolor{darkred}{rgb}{0,0.6,0}
\newcommand{\PP}{\mathbb{P}}
\renewcommand{\subseteq}{\subset}
\renewcommand{\hat}{\widehat}
\newcommand{\e}{\mathrm{e}}
\renewcommand{\tilde}{\widetilde}%
\renewcommand{\d}{\mathop{}\!\mathrm{d} }
\newcommand{\lebesgue}{\boldsymbol{\lambda}}
\newcommand{\cost}{\kappa}
\DeclareMathOperator*{\argmin}{arg\,min}
\DeclareMathOperator{\interior}{int}
\DeclareMathOperator{\Vol}{Vol}
\DeclareMathOperator{\tr}{tr}
\newcommand{\mat}[1]{\begin{bmatrix}#1\end{bmatrix}}
\newcommand{\n}[2][]{#1\lvert#2#1\rvert}
\newcommand{\coloneqq}{\coloneq}
\newcommand*\diff{\mathop{}\!\mathrm{d} }
\newcommand{\one}{\mathbf{1}}
\newcommand{\vertiii}[1]{{\left\vert\kern-0.25ex\left\vert\kern-0.25ex\left\vert #1
    \right\vert\kern-0.25ex\right\vert\kern-0.25ex\right\vert}}
\def\supp{\mathrm{supp}}
\let\originalleft\left
\let\originalright\right
\renewcommand{\left}{\mathopen{}\mathclose\bgroup\originalleft}
\renewcommand{\right}{\aftergroup\egroup\originalright}
\title{\fontsize{16}{19} \selectfont Data-driven rules for multidimensional reflection problems}
\author{S\"oren Christensen\thanks{Kiel University, Department of Mathematics, Heinrich-Hecht-Platz 6,
24118 Kiel, Germany. \newline Email: \href{mailto:christensen@math.uni-kiel.de}{christensen@math.uni-kiel.de}} \and Asbjørn Holk Thomsen\thanks{Aarhus University, Department of Mathematics, Ny Munkegade 118, 8000 Aarhus C, Denmark. \newline Email: \href{mailto:a.holk@math.au.dk}{a.holk@math.au.dk}/\href{mailto:trottner@math.au.dk}{trottner@math.au.dk}} \and Lukas Trottner\footnotemark[2]}
\date{\today}
\begin{document}
\maketitle
\begin{abstract} 
Over the recent past data-driven algorithms for solving stochastic optimal control problems in face of model uncertainty have become an increasingly active area of research. However, for singular controls and underlying diffusion dynamics the analysis has so far been restricted to the scalar case. In this paper we fill this gap by studying a multivariate singular control problem for reversible diffusions with controls of reflection type. Our contributions are threefold. We first explicitly determine the long-run average costs as a domain-dependent functional, showing that the control problem can be equivalently characterized as a shape optimization problem. For given diffusion dynamics, assuming the optimal domain to be strongly star-shaped, we then propose a gradient descent algorithm based on polytope approximations to numerically determine a cost-minimizing domain. Finally, we investigate data-driven solutions when the diffusion dynamics are unknown to the controller. Using techniques from nonparametric statistics for stochastic processes, we construct an optimal domain estimator, whose static regret is bounded by the minimax optimal estimation rate of  the unreflected process' invariant density. In the most challenging situation, when the dynamics must be learned simultaneously to controlling the process, we develop an episodic learning algorithm to overcome the emerging exploration-exploitation dilemma and show that given the static regret as a baseline, the loss in its sublinear regret per time unit is of natural order compared to the one-dimensional case.
\end{abstract}

{\noindent\small\href{https://mathscinet.ams.org/mathscinet/msc/msc2020.html}{\textit{2020 MSC}}: Primary 93E35, 68T05; secondary 49Q10, 60J60, 62M05 \\
\textit{key words}: Stochastic singular control, reinforcement learning, exploration vs.\ exploitation, reversible diffusions, shape optimization, nonparametric statistics}

\section{Introduction and problem formulation}
\normalfont
Our underlying processes are Langevin diffusions $X$ on $\R^d$ for $d \geq 2$, which is a well studied class of reversible diffusion processes with drift of potential form. For a $\mathcal{C}^2$-function $V\colon\R^d\to\R$ and a $d$-dimensional Brownian motion $W$, $X$ solves the SDE
\[\diff{X_t} =-\nabla V(X_t)\diff{t}+ \sqrt{2}\diff{W_t}.\]

We consider a basic class of stochastic control problems with a clear interpretation: $X$ is interpreted as the position of a particle, which we want to be close to a target state, 0, say. The distance is measured by a locally bounded function $f\colon\R^d\to[0,\infty)$ in such a way that $f(X_t)$ stand for the costs associated with being away from the target state. The decision maker can now control the process by choosing a nonempty bounded domain ($=$ open, connected set) $D\subseteq \R^d$ of class $\mathcal{C}^{2}$ and normally reflecting the process at $\partial D$. We denote the resulting normally reflected processes by $X^D$, which is given as the solution to 
\[\diff X_t^D=-\nabla V(X_t^D)\diff{t}+ \sqrt{2}\diff{W_t}+n(X_t^D)\diff{L_t^D},\]
where $n$ is the unit inward normal vector of $D$ at $x\in\partial D$ and $L^D$ denotes the local time of $X^D$ on $\partial D$, that is, a non-decreasing one-dimensional process with continuous paths that increases only when $X^D\in\partial D$. 
We assume that controlling the process is associated with costs proportional to $L^D$ (with proportionality factor
 $\cost>0$), so that for each $D$, the total costs associated with $D$ until time $t$ are
\[\int_0^tf(X_s^D)\d s+\cost L_t^D.\]
Here, we consider a long-term-average criterion and our problem consists of minimizing
\begin{align}\label{eq:problem}
	J(D)\coloneqq\liminf_{t\to\infty}\frac{1}{t}\E^x\left[\int_0^tf(X_s^D)\d s+\cost L_t^D\right]
\end{align}
over all admissible $D$. The limit is independent of the initial value $x \in D$ due to the ergodic nature of the reflected diffusion, which will be made precise in Theorem \ref{prop:erg_cost}. For known characteristics of the underlying process, the problem we consider here is closely connected to singular stochastic control problems. We discuss this in more detail in Section \ref{subsec:lit} below.

\subsection{Contributions}
In this paper, we first address the question how the solution to the problem \eqref{eq:problem} can be meaningfully characterized when the characteristics of the underlying process are known. Our main contribution here is Theorem \ref{prop:erg_cost} which shows under minimal assumptions that $J(D)$ is the $L^1$-limit of the average costs and is explicitly given by
\begin{align}\label{eq:J_functional}
J(D)=\frac{1}{\int_{D}\e^{-V(y)}\d y}\left(\int_Df(y)\e^{-V(y)}\diff{y} +\cost \int_{\partial D}\e^{-V(y)}\, \mathcal{H}^{d-1}(\diff{y})\right),
\end{align}
where $\mathcal{H}^{d-1}$ denotes the $(d-1)$-dimensional Hausdorff measure. 
The formula is interesting in that the control problem has been transformed into a shape optimization problem.

In Section \ref{sec:numerical} we give a numerical approach to minimizing $J(D)$ based on approximating $D$ by polytopes and then applying a gradient descent method. To this end, we derive explicit formulas (Theorem \ref{theo:numeric}) under the assumption that $D$ is star-shaped and then illustrate the method on problems with underlying Brownian motions and Ornstein--Uhlenbeck processes. 

Finally, in Section \ref{sec:learning} we show that the approach we present provides a suitable basis for addressing the problem in the context of model-based reinforcement learning.  More precisely, we show that the problem can be solved when the drift function $b=-\nabla V$ is unknown to the decision maker, so that the control has to be purely data driven. Our approach is based on estimating the stationary density of the uncontrolled process $\rho$ nonparametrically and specifying the exact rate with techniques from nonparametric statistics for diffusion processes (Theorem \ref{theo:est}). We use this via \eqref{eq:J_functional} to estimate the optimal boundary based on a path of the uncontrolled process and obtain that the resulting static regret has the same sublinear rate (Proposition \ref{prop:stat_regret}). In the more practically relevant situation of simultaneous optimization and data collection, we face an exploration vs. exploitation dilemma. The previous results together with an episodic learning approach lead to a proof of a sublinear regret rate (Theorem \ref{theo:expl} and Corollary \ref{coro:expl}).

\subsection{Related literature}\label{subsec:lit}
Stochastic singular control problems are a class of stochastic control problems that have been extensively studied, see \cite{MR3931325,MR2179357} for textbook treatments. They arise in various applications such as inventory management in operations research, control of queueing networks, portfolio selection with transaction costs in finance, equity issuance in insurance mathematics, position control in engineering, or optimal harvesting in biology. The actions of the agents affect the state and the costs in proportion to the size of the action. This structure implies that the optimal controls usually have the following general form: The space is divided into a region where action is required and a region $D$ where no action is required. The optimal control then reflects the controlled process at the boundary of the no-action region to keep it inside. Therefore, the problem class is closely related to reflected SDEs \cite{sznitman84,pilipenko2014introduction}, as observed for a variate of examples, see e.g., \cite{MR3959083, MR2476430, MR1001925,MR1766432}
(although this connection is difficult to be made precise in full generality, see the discussion in \cite{MR1649007}).
Thus, stochastic singular control can typically be reduced to finding the optimal no-action region $D$ and the optimal reflection direction at the boundary of $D$. However, finding the solution is usually only possible explicitly in a few examples, typically with underlying one-dimensional diffusions, see the following section. 
Characterizing optimal controls becomes much harder when the problem has more than one dimension. Some characterizations of the optimal solutions in special multidimensional cases can be found in \cite{MR4316795, MR4149543, MR4597536, MR1646687,MR1788074}. 
Thus, the problem \eqref{eq:problem} can be viewed as an optimization problem over a class of strategies typically relevant to general singular control problems. We restrict this, however, by the fact that only normal reflections are admissible. This assumption may be justifiable in some cases from the real world problem being modeled. However, we will also see below that in subclasses of problems, such as the radially symmetric case, this is not a restriction at all. In what follows, we will see that the assumption of normal reflection simplifies the problem to the point where a deeper analysis can be performed.

There is a fair amount of literature on the numerical treatment of singular control problems. The methods range from approaches based on a discretization with discrete Markov chains \cite{kushner1990numerical} over an approximation of the solution of the corresponding Hamilton--Jacobi--Bellman equation \cite{kumar2004numerical}, approaches using linear programming \cite{vieten2020solution} up to finite element approximations \cite{vieten2018convergence}. However, all methods have limited applicability, especially in higher dimensions, and care must be taken in the exact implementation. In this paper, we present a new approach that is structurally different from the existing ones.

As described above, the approach we have taken allows for data-driven control when the drift of the process is unknown. This question falls into the currently fast growing field of model based reinforcement learning (RL), where the agent does not know the system parameters and learns them by interacting with the environment and getting feedback. The agent chooses policies based on the current parameter estimation and tries to minimize the regret, which is the gap between the expected reward of the best policy and the actual reward achieved. Many discrete-time RL problems have been studied, where sublinear regret bounds have been obtained for different scenarios, such as bandit problems, tabular Markov decision problems, and linear quadratic (LQ) problems \cite{osband2014model,he2021logarithmic,dean2020sample}. However, for continuous-time RL problems, it is well known that time discretization does not work satisfactorily in standard approaches such as Deep-Q learning \cite{tallec2019making}. Therefore, the concrete model must be included here. Most of the previous works only propose algorithms, and only a few analyze their regrets, mostly for LQ problems. In particular, \cite{duncan1999adaptive} proved an asymptotic sublinear regret for regularized least-squares algorithms in an ergodic continuous-time LQ problem, but without giving the exact order of the bound. Recently, \cite{basei2022logarithmic,guo2021reinforcement} generalized the least-squares algorithms to finite-time horizon episodic settings and gave non-asymptotic regret bounds. These works assume a parametric structure of the problem. On the other hand, \cite{neuman2023statistical} considered propagator models and combined exploration and exploitation schemes to achieve sublinear regrets with high probability. Closest to this paper are the articles \cite{christensen21,christensen21_alt}, in which singular and impulse control problems with a nonparametric statistical diffusion structure are considered. However, these papers make heavy use of the one-dimensional structure of the underlying processes. This leads to the strategy already being described by one or two values, so only these need to be learned. One of the main contributions of this work is that we can give exact sublinear rates of regression even for the case where the optimal strategy is only infinite-dimensional parameterizable. To the best of our knowledge, this work is the first to provide such results in this context.

\section{Optimal reflection as a shape optimization problem for known characteristics}\label{sec:known_drift}
As detailed above, for our approach it is central to find explicit expressions for $J(D)$ for known drift. Before we get to that, we start here with a brief discussion of the one-dimensional special case, since it serves as the main motivation. The main observation for obtaining explicit solutions for all singular control problems for underlying linear diffusions is that the values $J(D)$ can be found (semi-)explicitly in terms of speed measure and scale function. In this linear case, solving \eqref{eq:problem} therefore boils down to a standard optimization problem, see \cite{alvarez2018class,christensen2023two} and the references therein. In the one dimensional ergodic case discussed here, this is realized by the fact that the stationary density of the reflected diffusion is just the conditional density of the uncontrolled diffusion. The key observation in this section is that this also holds in the multivariate case with underlying Langevin diffusions. This turns out to be key to establishing \eqref{eq:J_functional}.

\subsection{Ergodicity of the reflected Langevin diffusions}\label{subsec:langevin}
It is well-known that for an (uncontrolled) Langevin diffusion as introduced above, if  $\e^{-V}$ is integrable, then $X$ has a stationary density given by
\[
	\rho_{\R^d}(x)
	\coloneqq \rho(x) \coloneqq c^{-1}\exp(-V(x)), \mbox{ where }c=c_{\R^d}(V)=\int_{\R^d}\e^{-V(u)}\d u.
\]
For general diffusions, knowing the distribution on the whole state space does not give any information about the stationary distributions of the corresponding diffusions with reflection in a subdomain $D$.
A main observation for our approach is that for Langevin diffusions, this is different.
Indeed, in this particular situation one can obtain the stationary distribution on $D$ by conditioning:
\begin{lemma}\label{lem:inv_dens}
For a bounded domain $D\subseteq \R^d$ of class $\mathcal{C}^{2}$, the density $\rho_D$ given by 
\[
	\rho_{D}(x)
	=c_D^{-1}\exp(-V(x)), \quad x\in D, \mbox{ where }c_{D}=\int_{D}\e^{-V(u)}\d u,
\]
is a stationary density of the normally reflected process $X^D$.
\end{lemma}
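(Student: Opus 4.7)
The plan is to verify directly that $\rho_D$ is an invariant density of $X^D$ by checking the stationarity identity
\[\int_D (\mathcal{L}^D f)(x)\,\rho_D(x)\,\d x = 0\]
for $f$ ranging over a core of the generator $\mathcal{L}^D$ of the normally reflected process. Since $D$ has $\mathcal{C}^2$ boundary, standard results on reflected diffusions (as cited in the excerpt, e.g.\ \cite{sznitman84, pilipenko2014introduction}) guarantee that $\mathcal{L}^D$ acts as
\[\mathcal{L}^D f(x) = \Delta f(x) - \nabla V(x)\cdot \nabla f(x),\qquad x \in D,\]
on $\mathcal{C}^2(\bar D)$-functions satisfying the Neumann boundary condition $n(y)\cdot \nabla f(y) = 0$ on $\partial D$, and that such functions form a determining class for invariance.

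The key structural observation is that the Langevin generator admits the divergence form
\[e^{-V(x)}\,\mathcal{L}^D f(x) = \nabla \cdot \bigl(e^{-V(x)}\nabla f(x)\bigr),\]
which is checked by expanding the right-hand side using the product rule. Multiplying by $c_D^{-1}$ and applying the divergence theorem then yields
\[\int_D \mathcal{L}^D f(x)\,\rho_D(x)\,\d x \;=\; c_D^{-1}\int_{\partial D} e^{-V(y)}\, n(y)\cdot \nabla f(y)\,\mathcal{H}^{d-1}(\d y) \;=\; 0,\]
the last equality by the Neumann condition on $f$ (the fact that $n$ is the inward rather than the outward normal is immaterial, as the integrand vanishes). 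This essentially is the whole argument, and it crucially relies on the fact that the drift of a Langevin diffusion is the gradient of the log-density.

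The main obstacle is thus not the calculation, but the rigorous justification that vanishing of $\int_D \mathcal{L}^D f\cdot \rho_D$ on the above class of test functions does imply invariance of $\rho_D$ under $X^D$. I would invoke the standard theory of normally reflected diffusions on $\mathcal{C}^2$-domains for this step rather than reconstruct it. A conceptually appealing alternative, which sidesteps subtleties about the precise operator core, is to introduce the symmetric bilinear form $\mathcal{E}(f,g) = \int_D \nabla f \cdot \nabla g\, e^{-V}\,\d x$ on a suitable weighted Sobolev space and recognize that $\mathcal{L}^D$ is the generator of the associated Dirichlet form, so that $X^D$ is $\rho_D$-symmetric and hence $\rho_D$ is automatically invariant.
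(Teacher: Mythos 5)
Your proposal is correct, and the central computation --- integration by parts turning $\int_D \mathcal{L}^D f\,\rho_D\,\d x$ into a boundary term via the divergence structure $e^{-V}\mathcal{L}^D f = \nabla\cdot(e^{-V}\nabla f)$ --- is the same one carried out in the paper's Appendix~\ref{sec:app}. Where you differ is in how you close the gap you yourself flag: justifying that annihilation on test functions implies invariance. You propose either to invoke the standard characterization of the Neumann generator on a core, or to pass to the Dirichlet form $\mathcal{E}(f,g)=\int_D\nabla f\cdot\nabla g\,e^{-V}\d x$ and deduce invariance from $\rho_D$-symmetry. The paper instead appeals to the Kang--Ramanan criterion \cite{kang14}, which states that $\pi$ is stationary provided $\int_D Af\,\d\pi\le 0$ for all $f\in\mathcal{C}^2_c(\overline D)\oplus\R$ with $\langle n,\nabla f\rangle\ge 0$ on $\partial D$ --- a one-sided inequality over a richer test class that deliberately avoids any need to identify the exact domain or a core of $\mathcal{L}^D$. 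Thus the paper's boundary term $-\int_{\partial D}\langle\nabla f,n\rangle\rho_D\,\d\mathcal{H}^{d-1}$ is $\le 0$ rather than $=0$, and the Neumann condition never enters. Both routes are valid; the paper's buys a self-contained argument that sidesteps the core question entirely, while your Dirichlet form variant is arguably the most conceptually transparent (symmetry $\Rightarrow$ invariance) and would generalize most smoothly, at the cost of setting up the form-domain machinery.
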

This result is well known. For the convenience of the reader, a proof is provided in Appendix \ref{sec:app}.

It will be important for our purposes to have sufficiently fast convergence of the reflected diffusion to equilibrium. To this end,  we will assume that $D \subset \R^d$ is a  bounded domain of class $\mathcal{C}^2$ that is sufficiently nice to guarantee that the Markov process $(X^D, (\PP^x)_{x \in \overline{D}})$ has transition densities $(p_t^D(x,y))_{t > 0, x,y \in \overline{D}}$---that is, $\E^x[f(X^D_t)] = \int_{\overline{D}} p_t^D(x,y) f(y) \diff{y}$ for any bounded measurable function $f \colon \overline{D} \to \R$ and $x \in \overline{D}$---such that for any $t > 0$,  $p_t^D$ is continuous on $\overline{D} \times \overline{D}$ and we have the minorization property 
\begin{equation}\label{eq:minor} 
\inf_{x,y \in \overline{D}} p_1^D(x,y) \geq \delta,
\end{equation}
for some $\delta > 0$, where necessarily $\delta \lebesgue(D) \in (0,1)$. Let us denote by $\mathbf{D}$ the class of bounded domains $D \subset \R^d$ of class $\mathcal{C}^2$ such that these assumptions hold for the process reflected in $\partial D$. These  can be verified under mild assumptions on the boundary $\partial D$ by general results on fundamental solutions of parabolic PDEs with Neumann boundary conditions. For instance, continuity of the transition densities and \eqref{eq:minor} are ensured whenever $\partial D$ is the union of a finite number of hypersurfaces of  class $\mathcal{C}^3$, cf.\ \cite[p.166]{ito92}. 
These assumptions now guarantee uniqueness of the invariant distribution and exponential ergodicity of the process.

\begin{lemma}\label{lem:erg}
Let $D \in \mathbf{D}$ with transition minorization as in  \eqref{eq:minor}. Then, the unique stationary distribution $\pi_D$ of the normally reflected diffusion $X^D$ is given by 
\[\pi_D(\diff{x}) = \rho_D(x) \diff{x}, \quad x \in \overline{D},\]
and $X^D$ is uniformly ergodic, satisfying the bound
\[\lVert P_t^D(x,\cdot) - \pi_D \rVert_{\mathrm{TV}} \leq \tfrac{2}{1-\delta\lebesgue(D)} \mathrm{e}^{\log(1-\delta\lebesgue(D)) t}, \quad x \in \overline{D}, \quad t > 0.
\] 
\end{lemma}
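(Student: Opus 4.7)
The plan is to derive Lemma \ref{lem:erg} via the classical Doeblin coupling argument applied to the time-$1$ skeleton $P_1^D$. By Lemma \ref{lem:inv_dens}, $\pi_D(\d x) = \rho_D(x)\d x$ is already known to be an invariant probability measure, so all that remains is uniqueness and the quantitative convergence rate. The uniform minorization \eqref{eq:minor} will supply precisely the contraction needed for both.

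First, I would recast \eqref{eq:minor} in measure-theoretic form: since $p_1^D(x,y) \geq \delta$ uniformly on $\overline{D}\times\overline{D}$, the normalized uniform measure $\nu(A) := \lebesgue(A \cap \overline{D})/\lebesgue(D)$ satisfies
\[P_1^D(x,\cdot) \geq \alpha\,\nu(\cdot), \qquad x \in \overline{D}, \quad \text{with } \alpha := \delta\,\lebesgue(D) \in (0,1).\]
This admits the Doeblin splitting $P_1^D(x,\cdot) = \alpha\,\nu + (1-\alpha)\,Q(x,\cdot)$ where $Q(x,\cdot) := (P_1^D(x,\cdot) - \alpha\nu)/(1-\alpha)$ is a genuine Markov kernel. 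For any two probability measures $\mu, \mu'$ on $\overline{D}$ the common $\alpha\nu$ component cancels, giving $\mu P_1^D - \mu' P_1^D = (1-\alpha)(\mu Q - \mu' Q)$, hence
\[\lVert \mu P_1^D - \mu' P_1^D \rVert_{\mathrm{TV}} \leq (1-\alpha)\,\lVert \mu - \mu'\rVert_{\mathrm{TV}}.\]
Iterating and using $\lVert \mu - \mu'\rVert_{\mathrm{TV}} \leq 2$ yields the discrete-time bound $\lVert \mu P_n^D - \mu' P_n^D\rVert_{\mathrm{TV}} \leq 2(1-\alpha)^n$ for every integer $n \geq 0$. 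Uniqueness of the invariant measure then falls out immediately: any two candidates $\pi_D, \pi_D'$ are fixed by $P_n^D$, so $\lVert \pi_D - \pi_D'\rVert_{\mathrm{TV}} \leq 2(1-\alpha)^n \to 0$ forces $\pi_D = \pi_D'$.

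Finally, to reach arbitrary $t \geq 0$, I would set $n := \lfloor t \rfloor$ and $s := t - n \in [0,1)$, and decompose via the semigroup property as $P_t^D(x,\cdot) = \bigl(P_s^D(x,\cdot)\bigr)P_n^D$; invariance of $\pi_D$ under $P_n^D$ means that applying the contraction to $\mu := P_s^D(x,\cdot)$ and $\mu' := \pi_D$ gives
\[\lVert P_t^D(x,\cdot) - \pi_D \rVert_{\mathrm{TV}} \leq 2(1-\alpha)^n \leq \frac{2}{1-\alpha}(1-\alpha)^t = \frac{2}{1-\delta\lebesgue(D)}\,\mathrm{e}^{\log(1-\delta\lebesgue(D))\,t},\]
where the middle inequality uses $n \geq t-1$ together with $1-\alpha \in (0,1)$. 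No step is expected to pose serious difficulty, as this is essentially the textbook Doeblin/Meyn--Tweedie scheme; the only subtlety is the continuous-time interpolation, which is what produces the $(1-\delta\lebesgue(D))^{-1}$ prefactor in the stated bound.
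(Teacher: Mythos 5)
Your proposal is correct and in substance follows the same strategy as the paper: exploit the minorization \eqref{eq:minor} to get a uniform Doeblin condition on the time-$1$ skeleton, deduce geometric TV contraction for the discrete chain, and interpolate to continuous time via $n = \lfloor t\rfloor$ together with $\pi_D$-invariance. The chief difference is one of presentation: the paper first invokes strong Feller regularity and cites Meyn--Tweedie (Harris recurrence via \cite[Theorem 1.1]{mt92}, then uniform ergodicity of the skeleton via \cite[Theorem 16.2.4]{mt09}), whereas you give a self-contained Doeblin splitting $P_1^D(x,\cdot) = \alpha\nu + (1-\alpha)Q(x,\cdot)$ that yields the contraction $\lVert \mu P_1^D - \mu' P_1^D\rVert_{\mathrm{TV}} \le (1-\alpha)\lVert \mu-\mu'\rVert_{\mathrm{TV}}$ directly, from which both uniqueness and the quantitative rate drop out at once, bypassing Harris recurrence entirely. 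This is arguably a leaner route (no petiteness or Harris theory needed once the uniform minorization is in hand), and your continuous-time step via the semigroup decomposition $P_t^D = P_s^D P_n^D$ is equivalent to the paper's monotonicity-in-$t$ argument but slightly more explicit. One small point to keep in mind for rigor: the argument tacitly relies on $\delta\lebesgue(D) < 1$, which the paper notes is automatic from $\int_{\overline D} p_1^D(x,y)\,\diff y = 1$ combined with strictness of the continuity assumptions; worth flagging since the prefactor $(1-\alpha)^{-1}$ degenerates otherwise.
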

The proof is deferred to Appendix \ref{sec:app}. As a consequence we have the following rate in the ergodic theorem.

\begin{corollary}\label{coro:erg}
Let $D \in \mathbf{D}$. There exists a constant $C=C(D) > 0$ such that for any $h \in L^\infty(\overline{D})$ and $x \in \overline{D}$ it holds 
\[\frac{1}{t}\E^x\Big[\Big\lvert \int_0^t (h(X_s^D) - \pi_D(h)) \diff{s}\Big\rvert\Big] \leq \frac{C \lVert h \rVert_{L^\infty(\overline{D})}}{\sqrt{t}}.\]
\end{corollary}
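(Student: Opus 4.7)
The plan is to reduce to an $L^2$ variance estimate via Jensen's inequality and then exploit the exponential mixing from Lemma \ref{lem:erg} to bound the double time integral of the autocovariance.

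First I would replace $h$ by the centered function $\tilde{h} \coloneqq h - \pi_D(h)$, which satisfies $\|\tilde{h}\|_{L^\infty(\overline{D})} \leq 2\|h\|_{L^\infty(\overline{D})}$ and $\pi_D(\tilde h) = 0$, so it suffices to bound $\E^x[|\int_0^t \tilde h(X_s^D)\diff{s}|]$. Applying Jensen's inequality reduces the task to controlling the second moment
\[
\E^x\left[\left(\int_0^t \tilde h(X_s^D)\diff{s}\right)^2\right] = 2\int_0^t\int_0^s \E^x\bigl[\tilde h(X_u^D)\tilde h(X_s^D)\bigr]\diff{u}\diff{s}.
\]

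The key step is to estimate the covariance $\E^x[\tilde h(X_u^D)\tilde h(X_s^D)]$ for $u \leq s$ using the Markov property and uniform ergodicity. Writing
\[
\E^x\bigl[\tilde h(X_u^D)\tilde h(X_s^D)\bigr] = \E^x\bigl[\tilde h(X_u^D)\, (P_{s-u}^D \tilde h)(X_u^D)\bigr],
\]
and using that $\pi_D(\tilde h) = 0$ together with the total variation bound from Lemma \ref{lem:erg}, I obtain $|(P_{s-u}^D\tilde h)(y)| = |(P_{s-u}^D\tilde h)(y) - \pi_D(\tilde h)| \leq 2\|\tilde h\|_{L^\infty(\overline{D})}\|P_{s-u}^D(y,\cdot) - \pi_D\|_{\mathrm{TV}} \leq C_1 \|h\|_{L^\infty(\overline{D})} \mathrm{e}^{-\lambda(s-u)}$ with $\lambda \coloneqq -\log(1-\delta\lebesgue(D)) > 0$ and a constant $C_1 = C_1(D)$. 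Thus
\[
\bigl|\E^x[\tilde h(X_u^D)\tilde h(X_s^D)]\bigr| \leq C_2 \|h\|_{L^\infty(\overline{D})}^2 \mathrm{e}^{-\lambda(s-u)}.
\]

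Plugging this estimate into the double integral yields
\[
\E^x\left[\left(\int_0^t \tilde h(X_s^D)\diff{s}\right)^2\right] \leq 2 C_2\|h\|_{L^\infty(\overline{D})}^2 \int_0^t\int_0^s \mathrm{e}^{-\lambda(s-u)}\diff{u}\diff{s} \leq \frac{2C_2}{\lambda}\|h\|_{L^\infty(\overline{D})}^2\, t,
\]
and Jensen's inequality finally gives $\E^x[|\int_0^t \tilde h(X_s^D)\diff{s}|] \leq C \|h\|_{L^\infty(\overline{D})}\sqrt{t}$ for a constant $C = C(D)$, from which dividing by $t$ yields the claim. There is no real obstacle here; the only point requiring care is the normalization in passing from the TV bound to the pointwise decay of $P_{s-u}^D\tilde h$, which is standard but must be verified to ensure that the constant $C$ indeed depends only on $D$ (through $\delta$ and $\lebesgue(D)$).
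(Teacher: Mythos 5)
Your proof is correct, but it takes a genuinely different route from the paper. The paper first invokes an external reference (Corollary 3.3 of the cited work by Dexheimer et al.) to obtain the $L^1$ estimate under the stationary initial distribution $X_0^D \sim \pi_D$, and then performs a transfer argument to deterministic initial condition $x$: it shifts the time interval by $\sqrt{t}$, applies the Markov property and the total-variation bound from Lemma \ref{lem:erg} to compare $\E^x$ with $\E^{\pi_D}$, and absorbs the boundary contributions. Your approach instead works directly with the arbitrary starting point from the outset: you pass to the second moment via Jensen, expand the square as a double integral of autocovariances, and bound each covariance term using the Markov property together with the exponential TV decay from Lemma \ref{lem:erg} applied pointwise to $(P_{s-u}^D\tilde h)(y)$. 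This is entirely self-contained (no external citation needed), the constant's dependence on $D$ through $\delta$ and $\lebesgue(D)$ is explicit, and the argument is arguably more elementary since it avoids the two-step structure of stationary-then-transfer. The trade-off is that the paper's citation-based route would carry over verbatim to more general recurrent settings where a clean exponential covariance bound is not directly available, whereas your covariance method leans on uniform ergodicity; in the present setting both work equally well.
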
 
Again, the proof can be found in Appendix \ref{sec:app}.

\subsection{Solution of the ergodic control problem}
Given the ergodicity assumptions from the previous subsection we can now fully characterize the ergodic average expected costs $J(D)$ from \eqref{eq:problem} in terms of the invariant distribution of the reflected diffusion $X^D$. In fact, we will show  more: we prove that the bias of the average costs vanishes linearly in time and that their stochastic fluctuation measured in terms of the $L^1$-deviation from $J(D)$ vanishes at square-root rate.

\begin{theorem}\label{prop:erg_cost}
Let $D\in \mathbf{D}$. Then, there exist constants $C(D), C^\prime(D) > 0$ that depend on $D$ but are independent of $x \in \overline{D}$ and $t \geq 1$ such that 
\begin{equation}\label{eq:l1_conv}
\E^x\Big[\Big\lvert \frac{1}{t}\Big(\int_0^t f(X_s^D) \diff{s} + \kappa L^D_t \Big) - \Big(\int_D f(y)\rho_D(y)\d y+\cost\int_{\partial D}\rho_D(y)\, \mathcal{H}^{d-1}(\d y) \Big) \Big\rvert \Big] \leq \frac{C(D)}{\sqrt{t}},
\end{equation} 
and
\begin{equation}\label{eq:exp_conv}
\Big\lvert\E^x\Big[ \frac{1}{t}\Big(\int_0^t f(X_s^D) \diff{s} + \kappa L^D_t \Big) \Big] - \Big(\int_Df(y)\rho_D(y)\d y+\cost\int_{\partial D}\rho_D(y)\, \mathcal{H}^{d-1}(\d y) \Big) \Big\rvert  \leq \frac{C^\prime(D)}{t},
\end{equation}
where $\mathcal{H}^{d-1}$ denotes the $(d-1)$-dimensional Hausdorff measure. 
In particular,
\[
	J(D)
	=\int_Df(y)\rho_D(y)\d y+\cost\int_{\partial D}\rho_D(y)\, \mathcal{H}^{d-1}(\d y).
\]
\end{theorem}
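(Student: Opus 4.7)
The plan is to split the cost into the occupation part $\int_0^t f(X_s^D)\diff{s}$ and the local-time part $\kappa L_t^D$, and to treat each separately, the latter via an auxiliary Neumann-type function. Since $D$ is bounded and $f$ is locally bounded, we have $f \in L^\infty(\overline{D})$, so that Corollary \ref{coro:erg} applied to $h = f$ gives the desired $1/\sqrt{t}$ rate in $L^1$ for the occupation integral centered at $\pi_D(f) = \int_D f \rho_D \diff{y}$; for the expectation bound we instead integrate the total-variation estimate of Lemma \ref{lem:erg}, using that $\lvert \E^x[f(X_s^D)] - \pi_D(f) \rvert \leq \lVert f \rVert_{L^\infty(\overline{D})} \lVert P_s^D(x,\cdot) - \pi_D \rVert_{\mathrm{TV}}$ decays exponentially in $s$, so the Cesàro average is $O(1/t)$.

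The main work is to show $L_t^D/t \to \int_{\partial D} \rho_D \,\mathcal{H}^{d-1}(\diff{y})$ at the required rates. The idea is to pick an auxiliary function $u \in \mathcal{C}^2(\overline{D})$ with $\partial_n u \equiv 1$ on $\partial D$, where $n$ is the inward unit normal. Such a $u$ can be produced by solving a Neumann problem $\mathcal{L} u = g_0$ in $D$ with $\partial_n u = 1$ on $\partial D$ for a constant $g_0$ chosen to satisfy the compatibility condition; standard elliptic theory for domains in $\mathbf{D}$ provides a classical solution. Applying Itô's formula to $u(X_t^D)$ along the Skorokhod SDE and using that $L^D$ increases only when $X^D \in \partial D$ yields
\[
L_t^D = u(X_t^D) - u(x) - \int_0^t \mathcal{L} u(X_s^D) \diff{s} - \sqrt{2}\int_0^t \nabla u(X_s^D)\cdot \diff{W_s},
\]
where $\mathcal{L} = \Delta - \nabla V \cdot \nabla$ denotes the generator.

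To identify the limit, I would exploit the Langevin structure via $\mathrm{e}^{-V}\mathcal{L} u = \nabla \cdot (\mathrm{e}^{-V}\nabla u)$ and the divergence theorem, recalling that the outward normal equals $-n$:
\[
\pi_D(\mathcal{L}u) = \frac{1}{c_D}\int_D \nabla \cdot (\mathrm{e}^{-V}\nabla u)\diff{y} = -\frac{1}{c_D}\int_{\partial D} \mathrm{e}^{-V}\, \partial_n u\,\mathcal{H}^{d-1}(\diff{y}) = -\int_{\partial D}\rho_D\,\mathcal{H}^{d-1}(\diff{y}),
\]
so that the target for $\kappa L_t^D/t$ is exactly $-\kappa\,\pi_D(\mathcal{L}u)$. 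For the $L^1$ bound, after dividing by $t$, the bounded-boundary term $(u(X_t^D) - u(x))/t$ is $O(1/t)$ since $u \in L^\infty(\overline{D})$, the centered ergodic average of $\mathcal{L}u \in L^\infty(\overline{D})$ gives $O(1/\sqrt{t})$ by Corollary \ref{coro:erg}, and the stochastic integral contributes $O(1/\sqrt{t})$ via Cauchy--Schwarz together with Itô's isometry using that $\nabla u$ is bounded on $\overline{D}$. For the expectation bound, the martingale term vanishes, $u(X_t^D)-u(x)$ contributes $O(1/t)$, and the $\mathcal{L}u$-term is again $O(1/t)$ by the Lemma \ref{lem:erg} argument above.

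The main obstacle is the rigorous construction and use of the auxiliary function $u$: one needs the $\mathbf{D}$-regularity of $D$ to guarantee a classical solution of the Neumann problem, and one must justify Itô's formula for the reflected semimartingale with pointwise identification of the local-time term $\int_0^t \partial_n u(X_s^D) \diff{L_s^D} = L_t^D$. Everything else reduces to the already-established ergodic ingredients and a clean integration-by-parts identity tailored to the Langevin measure.
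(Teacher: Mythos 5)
Your proposal is correct and follows the same overall template as the paper: apply the ergodic-rate results (Lemma~\ref{lem:erg} and Corollary~\ref{coro:erg}) to the occupation term, and handle the local-time term via It\^o's formula applied to an auxiliary function whose normal derivative is $1$ on $\partial D$, identifying the limiting constant through the divergence theorem for the weighted Laplacian $\nabla\cdot(\mathrm{e}^{-V}\nabla\,\cdot\,)$. The one genuine difference is the construction of the auxiliary function. The paper (Lemma~\ref{lem:uninormal}) builds $\varphi\in\mathcal{C}^2(\R^d)$ with $\nabla\varphi = n$ on $\partial D$ purely geometrically, via the signed distance function and the Whitney extension theorem; this avoids any PDE solvability input and works directly under the $\mathcal{C}^2$-boundary assumption built into $\mathbf{D}$. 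You instead solve the Neumann problem $\mathcal{L}u = g_0$ in $D$, $\partial_n u = 1$ on $\partial D$, with $g_0$ fixed by the compatibility condition. This buys you something you did not exploit: since $\mathcal{L}u\equiv g_0 = \pi_D(\mathcal{L}u)$ is constant, the ``ergodic average'' term $\int_0^t(\mathcal{L}u(X_s^D)-\pi_D(\mathcal{L}u))\,\diff{s}$ vanishes identically, so for the local-time part you would not even need Corollary~\ref{coro:erg}---only boundedness of $u$ and It\^o isometry for the martingale. The price is that a \emph{classical} $\mathcal{C}^2(\overline D)$ solution of the Neumann problem is not automatic for a $\mathcal{C}^2$ domain; Schauder theory typically asks for $\mathcal{C}^{2,\alpha}$ boundary regularity (or the piecewise-$\mathcal{C}^3$ regularity the paper invokes anyway to verify membership in $\mathbf{D}$). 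You should flag that you are implicitly strengthening the boundary regularity beyond the bare $\mathcal{C}^2$ hypothesis, or fall back on the geometric construction when only $\mathcal{C}^2$ is available. Everything else---the $O(1/t)$ boundary term, It\^o isometry for the martingale, the divergence-theorem identity, and the expectation-rate argument via integrating the exponential total-variation bound---matches the paper's argument and is sound.
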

We will need the following basic  result for the proof, which we include here for the lack of a precise reference.
\begin{lemma} \label{lem:uninormal}
	Let $O\subset\R^d$ be a bounded open set of class $\mathcal{C}^k$ for some $k\ge 2$ and let $n$ be the unit inward  normal vector on $\partial O$.
	Then there exists a function $\varphi \in \mathcal{C}^k(\R^d)$ such that $\nabla \varphi = n$ on $\partial O$.
\end{lemma}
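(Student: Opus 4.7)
The natural candidate for $\varphi$ is a suitably extended signed distance function to $\partial O$. Concretely, I would set
\[
d_O(x) \coloneqq \begin{cases} \operatorname{dist}(x,\partial O), & x \in \overline{O},\\ -\operatorname{dist}(x,\partial O), & x \notin \overline{O},\end{cases}
\]
so that $d_O = 0$ on $\partial O$ and $d_O > 0$ on $O$. The classical regularity result for signed distance functions (see e.g.\ Krantz--Parks, \emph{The Implicit Function Theorem}, or Gilbarg--Trudinger Lemma 14.16) states that since $\partial O$ is of class $\mathcal{C}^k$ with $k \geq 2$, there exists an open tubular neighborhood $U \supset \partial O$ on which $d_O \in \mathcal{C}^k(U)$, and at every boundary point $y \in \partial O$ one has $\nabla d_O(y) = n(y)$ (the sign convention above makes $\nabla d_O$ point into $O$).

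The plan is then to globalize $d_O$ by a cutoff. First I would shrink $U$ to a smaller tubular neighborhood $U'$ with $\overline{U'} \subset U$, which is possible by compactness of $\partial O$. Pick a cutoff $\chi \in \mathcal{C}^\infty(\R^d)$ with $\chi \equiv 1$ on $U'$ and $\supp \chi \subset U$, and define
\[
\varphi(x) \coloneqq \chi(x)\, d_O(x), \qquad x \in \R^d,
\]
with the convention $\varphi(x) = 0$ outside $\supp \chi$, which is consistent since $\chi$ vanishes there. Then $\varphi \in \mathcal{C}^k(\R^d)$: on $U$ it is the product of two $\mathcal{C}^k$ functions, and outside $\supp \chi$ it is identically zero, while the two definitions agree on the overlap. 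Finally, for any $y \in \partial O \subset U'$, the product rule yields
\[
\nabla \varphi(y) = \chi(y)\,\nabla d_O(y) + d_O(y)\,\nabla \chi(y) = 1\cdot n(y) + 0 = n(y),
\]
which gives the claim.

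The only nontrivial input is the $\mathcal{C}^k$-regularity of the signed distance function near a $\mathcal{C}^k$-boundary; everything else is a routine cutoff argument. One can of course also prove the regularity directly by locally straightening $\partial O$ via $\mathcal{C}^k$-charts and invoking the implicit function theorem to represent $d_O$ as the normal coordinate, but citing the standard signed-distance result is cleanest.
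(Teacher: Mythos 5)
Your proof is correct, and the core of the argument --- taking the signed distance function to $\partial O$, invoking its classical $\mathcal{C}^k$-regularity on a tubular neighbourhood of $\partial O$, and observing $\nabla d_O = n$ there --- is exactly what the paper does. Where you diverge is the final globalization step: you multiply $d_O$ by a smooth cutoff $\chi$ supported in the tubular neighbourhood, whereas the paper instead applies the Whitney extension theorem to extend $s$ from a compact closed subneighbourhood of $\partial O$ to all of $\R^d$. Your cutoff construction is a bit more elementary (no need for Whitney's theorem) and equally serves the purpose: since $\{U, \R^d \setminus \supp\chi\}$ is an open cover on which $\chi d_O$ is respectively $\mathcal{C}^k$ and identically zero, and the two descriptions agree on the overlap, $\varphi = \chi d_O \in \mathcal{C}^k(\R^d)$, and $\varphi$ coincides with $d_O$ on the smaller neighbourhood $U'$ so that $\nabla\varphi = n$ on $\partial O$. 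The only thing you skip relative to the paper is the short derivation of $\nabla d_O = n$ on $\partial O$ (via the directional derivative along $n$ plus the implicit-function-theorem orthogonality argument); you cite it as part of the classical package, which is acceptable, but the paper spells it out for self-containedness.
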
 

\begin{proof} 
	For a set $A \subset \R^d$ let $d(x,A) \coloneqq \inf\{\lvert x -y \rvert: y \in A\}$ be the distance function to $A$ and denote by $A^\varepsilon \coloneqq \{y \in \R^d: \lvert x-y \rvert < \varepsilon \text{ for some } x \in A\}$  its $\varepsilon$-fattening for $\varepsilon > 0$.
	Let 
	\[
		s(x)\coloneqq
		\begin{cases}
			d(x,\partial O), & x \in O,\\
			-d(x,\partial O), &x \in \R^d \setminus O,
		\end{cases}
	\]
	be the signed distance to the boundary $\partial O$.
	By \cite[Theorem 3]{krantz81} there exists an open neighborhood $U$ of $\partial O$ such that $s \in \mathcal{C}^k(U)$, and in particular $\nabla s$ is well-defined on $\partial O$.
	Moreover, we have $\nabla s=n$ on $\partial O$.
	Indeed,
 for given $y \in \partial O$, by the above there exists $r > 0$ such that $s\vert_{B(y,r)}\colon B(y,r) \to \R$ is a $\mathcal{C}^k$ function and clearly $s\vert_{B(y,r)}^{-1}(\{0\}) = \partial O \cap B(y,r)$. By the smoothness of $\partial O$ we have $s(y+hn(y)) = h$ for $h \geq 0$ small enough, whence
 \begin{equation}\label{eq:dir_der}
		\langle \nabla s(y), n(y)\rangle
		=\nabla_{n(y)}s(y)
		=\lim_{h\to0}\frac{s\big(y+hn(y)\big)-s(y)}{h}
		=1,
\end{equation}
 and therefore $\nabla s(y) \neq 0$. The usual argument via the implicit function  theorem now shows that $\nabla s(y)$ is orthogonal to the hyperplane tangent to $\partial O$ at $y$. That is, $\nabla s(y) \propto n(y)$ and we conclude from \eqref{eq:dir_der} that indeed $\nabla s(y) = n(y)$. 
Now, since $s\in \mathcal C^k(U)$, we must have $s \in \mathcal{C}^k((\partial O)^\varepsilon)$ for some $\varepsilon>0$,
and since $\operatorname{cl}((\partial O)^{\varepsilon/2}) \subset (\partial O)^\varepsilon$ is compact, it  follows from the classical Whitney extension theorem \cite{whitney34} that there exists a function $\varphi \in \mathcal{C}^k(\R^d)$ such that $\varphi = s$ on $\operatorname{cl}((\partial O)^{\varepsilon/2})$.
In particular, on $\partial O$ it holds $\nabla \varphi = \nabla s = n$, and we conclude that $\varphi$ has the desired properties.
\end{proof}
\begin{proof}[Proof of Theorem \ref{prop:erg_cost}]
We only prove the claim on $L^1(\PP^x)$-convergence at square root rate in \eqref{eq:l1_conv}, the statement on convergence in expectation at linear rate in \eqref{eq:exp_conv} follows from similar, but easier considerations. We first have as in the proof of Lemma \ref{lem:inv_dens}, letting $A$ be the differential operator from \eqref{eq:diff_op},
	\begin{align*}
		\int_{D}Af(x)\, \pi_D(\diff{x})
		&=-\int_{\partial D}\langle \nabla f(x), n(x)\rangle\rho_D(x)\, \mathcal{H}^{d-1}(\diff{x}),\quad f\in \mathcal{C}^2(\R^d).
	\end{align*}
	In particular, since $D$ is of class $\mathcal{C}^2$, by Lemma \ref{lem:uninormal} we may choose $\varphi\in \mathcal{C}^2(\R^d)$ such that $\nabla\varphi(x)=n(x)$ for $x\in\partial D$, whereby we find 
	\[
		\int_DA\varphi(x)\, \pi_D(\diff{x})
		=-\int_{\partial D}\rho_D(x)\, \mathcal{H}^{d-1}(\diff{x}).
	\]
	Now by Itô's formula for $t\ge0$ almost surely,
	\begin{align*}
		\varphi(X_t^D)-\varphi(X_0^D) &=\int_{0}^{t}A\varphi(X_s^D)\diff{s}+\int_{0}^{t}\langle\nabla\varphi(X_s^D), n(X_s^D)\rangle \diff{L_s^D}+\int_{0}^{t}\langle \nabla\varphi(X_s^D),\diff{W_s}\rangle \\
		&=\int_{0}^{t}A\varphi(X_s^D)\diff{s}+L_t^D+\int_{0}^{t}\langle \nabla\varphi(X_s^D), \diff{W_s}\rangle,
	\end{align*}
	where we used 
	\[
		\int_{0}^{t}\langle \nabla\varphi(X_s^D), n(X_s^D)\rangle \diff{L_s^D}
		=\int_{\{s\le t:X_s^D\in\partial D\}}\langle \nabla\varphi(X_s^D), n(X_s^D) \rangle \diff{L_s^D}
		=\int_{\{s\le t: X_s^D\in\partial D\}}\diff{L_s^D}
		=L_t^D.
	\]
	Note also that since $\nabla\varphi$ is continuous, it is bounded on $D$, and hence $\int_{0}^{\cdot}\langle \nabla\varphi(X_s^D), \diff{W_s}\rangle$ is a $L^2$-martingale. Combining these, we find for any $x \in \overline{D}$,
\begin{align*}
&\mathbb{E}^x\Big[\Big\lvert \frac{1}{t} L_t^D - \int_{\partial D} \rho_D(x) \, \mathcal{H}^{d-1}(\diff{x}) \Big\rvert \Big]\\ 
&\,= \mathbb{E}^x\Big[\Big\lvert \frac{1}{t}\Big(\varphi(X_t^D)-\varphi(X_0^D)-\int_{0}^{t}(A\varphi(X_s^D) - \pi_D(A\varphi)) \diff{s} - \int_0^t \langle \nabla \varphi(X_s^D), \diff{W_s} \rangle \Big)\Big\rvert\Big] \\
&\,\leq 2\frac{\lVert \varphi \rVert_{L^\infty(D)}}{t} + \E^x\Big[\frac{1}{t} \Big\lvert \int_{0}^{t}(A\varphi(X_s^D) - \pi_D(A\varphi)) \diff{s} \Big\rvert  \Big] + \frac{1}{t} \E^x\Big[\Big\lvert \int_0^t \langle \nabla \varphi(X_s^D), \diff{W_s} \rangle \Big\rvert \Big] \\
&\, \lesssim \frac{\lVert \varphi \rVert_{L^\infty(D)} + \lVert A\varphi \rVert_{L^\infty(D)}}{\sqrt{t}} + \frac{1}{t} \E^x\Big[\Big\lvert \int_0^t \langle \nabla \varphi(X_s^D), \diff{W_s} \rangle \Big\rvert \Big] \\ 
&\, \leq \frac{\lVert \varphi \rVert_{L^\infty(D)} + \lVert A\varphi \rVert_{L^\infty(D)}}{\sqrt{t}} + \frac{1}{t} \Big(\E^x\Big[\int_0^t \lvert \nabla \varphi(X_s^D) \rvert^2 \diff{s} \Big]\Big)^{1/2} \\
&\leq \frac{\lVert \varphi \rVert_{L^\infty(D)} + \lVert A\varphi \rVert_{L^\infty(D)}+ \lVert \lvert \nabla \varphi \rvert \rVert_{L^\infty(D)}}{\sqrt{t}} 
\end{align*}
where we used Corollary \ref{coro:erg} for the second inequality and Hölder inequality together with It\^{o}-isometry for the third inequality. 
 Using this, another application of Corollary \ref{coro:erg} for the continuous cost component yields 
 \[\E^x\Big[\Big\lvert \frac{1}{t}\Big(\int_0^t f(X_s^D) \diff{s} + \kappa L^D_t \Big) - \Big(\int_Df(y)\rho_D(y)\d y+\cost\int_{\partial D}\rho_D(y)\, \mathcal{H}^{d-1}(\d y) \Big) \Big\rvert \Big] \leq \frac{C}{\sqrt{t}},\]
 as claimed.
\end{proof}

Putting pieces together, we obtain a formula for $J(D)$ which is just based on the stationary density $\rho$ of the uncontrolled process if the latter is ergodic.

\begin{corollary}\label{coro:J_stat}
For any $D \in \mathbf{D}$, it holds that
\[J(D)=\frac{1}{\int_{D}\e^{-V(y)}\d y}\left(\int_Df(y)\mathrm{e}^{-V(y)}\d y+\cost \int_{\partial D}\mathrm{e}^{-V(y)}\, \mathcal{H}^{d-1}(\diff{y})\right).
\]
If, moreover, $\exp(-V) \in L^1(\R^d)$, then 
\[J(D)=\frac{1}{\int_{D}\rho(y)\d y}\left(\int_Df(y)\rho(y)\d y+\cost \int_{\partial D}\rho(y)\, \mathcal{H}^{d-1}(\diff{y})\right).
\]
\end{corollary}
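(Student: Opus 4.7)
The corollary is essentially a direct substitution, so the plan is short. The first identity follows by plugging the explicit expression $\rho_D(x) = c_D^{-1} \exp(-V(x))$ from Lemma \ref{lem:inv_dens} into the formula for $J(D)$ obtained in Theorem \ref{prop:erg_cost}. Pulling the normalising constant $c_D^{-1} = (\int_D \mathrm{e}^{-V(y)}\d y)^{-1}$ in front of both integrals and noting that $\exp(-V)$ is locally bounded (hence integrable on the bounded set $D$ and on $\partial D$ against $\mathcal{H}^{d-1}$, since $\partial D$ is a compact $\mathcal{C}^2$-hypersurface) immediately gives the first displayed identity.

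For the second formula, I would use the additional integrability assumption $\exp(-V) \in L^1(\R^d)$ to rewrite $\exp(-V) = c \rho$ with $c = \int_{\R^d} \mathrm{e}^{-V(u)}\d u$. Substituting $\mathrm{e}^{-V(y)} = c \rho(y)$ into the first formula, the constant $c$ factors out of each of the three integrals and cancels between numerator and denominator, yielding
\[
J(D) = \frac{1}{\int_D \rho(y)\d y}\left(\int_D f(y)\rho(y)\d y + \cost \int_{\partial D} \rho(y)\, \mathcal{H}^{d-1}(\diff{y})\right),
\]
as desired. There is no genuine obstacle here; the only point to mention for completeness is that all integrals are finite by the local boundedness of $f$ and $\rho$, the boundedness of $D$, and the finiteness of $\mathcal{H}^{d-1}(\partial D)$ for $D \in \mathbf{D}$.
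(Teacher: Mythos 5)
Your proof is correct and is the obvious direct substitution of $\rho_D = c_D^{-1}\mathrm{e}^{-V}$ (Lemma \ref{lem:inv_dens}) into the formula for $J(D)$ in Theorem \ref{prop:erg_cost}, followed by cancelling the normalising constant $c = \int_{\R^d}\mathrm{e}^{-V}$ under the extra integrability hypothesis; this is exactly how the paper obtains the corollary (which it states without a separate proof).
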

We point out that this result is a multidimensional version of \cite[Lemma 2.1]{alvarez2018class}.

\section{Numerical optimization}\label{sec:numerical}
Corollary \ref{coro:J_stat} shows that, for known $\cost>0,V$ and $f$, our problem for known dynamics boils down to minimizing the functional
\[
	J\colon D\mapsto\frac{1}{\int_{D}\e^{-V(x)}\d x}\left(\int_Df(x)\e^{-V(x)}\,\d x+\cost\int_{\partial D}\e^{-V(x)}\, \mathcal{H}^{d-1}(\d x)\right)
\]
over a suitable set of bounded domains $D\subseteq \R^d$.
We are therefore faced with a shape optimization problem. For a general overview on shape optimization we refer to \cite{delfour11}. 
To approach our particular problem numerically, we restrict ourselves to bounded domains that are \textit{strongly starshaped at 0}. Specifically, we assume that for any $D$ the boundary is given by
\[
	\partial D
	=\{r(q)q : q\in S^{d-1}\},
\]
for some suitably smooth radial function $r\colon S^{d-1}\to(0, \infty)$ on the $d$-sphere $S^{d-1}$.
Rather than optimizing over all such functions, we discretize the problem by considering $N\in\N$ points placed uniformly (in a suitable sense) 
on the sphere, say $\{q_i\}_{i=1}^N\subseteq S^{d-1}$, and then approximating any star-shaped set $D$ by the polytope $\widetilde{D}$ with vertices $\{p_i\}_{i=1}^N \coloneqq \{r(q_i)q_i\}_{i=1}^N$.
The polytope $\widetilde{D}$ has $N$ facets and hence can naturally be split into $N$ simplices, each with a vertex at the origin and the remaining vertices given by $p_{i_1},p_{i_2},\ldots,p_{i_d}$ for some appropriate set of indices $i_1, i_2, \ldots, i_d$.
Denote for such a set of indices $I=\{i_j\}_{j=1}^d$ by $F_I$ the facet of the simplex opposite the origin, and by $S_I$ the simplex itself.
Letting $\mathcal I$ be the family of such sets of indices, we arrive at the following approximation of the objective function
\begin{equation}\label{eq:approx_obj}
	J(D)
	\approx J(\widetilde{D})
	=\frac{1}{\sum_{I\in \mathcal I}\int_{S_I}\e^{-V(x)}\mathrm{d}x}\sum_{I\in \mathcal I}\Big(\int_{S_I}f(x)\e^{-V(x)}\,\mathrm{d}x+\kappa\int_{F_I}\e^{-V(x)}\, \mathcal H^{d-1}(\mathrm{d}x)\Big).
\end{equation}
Note that this approximation depends on $r$ (and hence on $D$) only through the $N$ lengths $\{r_i\}_{i=1}^N:=\{r(q_i)\}_{i=1}^N$, and hence we may consider $J$ as a function of $N$ variables, $J(\bm{r})\coloneqq J(\widetilde{D})$. 
Gradient based optimization schemes can now be used, since
\begin{equation}\label{eq:approx_gradient}
	\frac{\partial J(\bm{r})}{\partial r_i}
	=\frac{1}{C}\sum_{I\in \mathcal I_i}\Big(\frac{\partial}{\partial r_i}\Big(\int_{S_I}f(x)\e^{-V(x)}\,\mathrm{d}x+\kappa\int_{F_I}\e^{-V(x)}\,\mathcal H^{d-1}(\mathrm{d}x)\Big)
	-J(\bm{r})\frac{\partial}{\partial r_i}\Big(\int_{S_I}\e^{-V(x)}\,\mathrm{d}x\Big)\Big),
\end{equation}
where $C=\sum_{I\in \mathcal I}\int_{S_I}\e^{-V(x)}\, \mathrm{d}x$, and $\mathcal I_i=\{I\in \mathcal I: i\in I\}$.
To evaluate these expressions, we use the following theorem.

\begin{theorem}\label{theo:numeric}
	Let $q_1,\ldots,q_d\in S^{d-1}$ and $r_1,\ldots,r_d>0$ be given and let $p_i=r_iq_i$ for $i=1,\ldots,d$. Denote by $S$ the simplex in $\R^d$ spanned by the origin and the points $p_1,\ldots,p_d$ and by $F$ the facet of $S$ opposite the origin.
	Finally, let $P$ denote the $d\times d$ matrix whose $i$'th column is $p_i$ and $P_{-1}$ the $d\times(d-1)$ matrix whose $i$'th column is $p_{i+1}-p_1$.
	Then we have the following for $g\in \mathcal{C}(\R^d, \R)$:
	\begin{align}
		\int_{S}g(x)\,\mathrm{d}x
		&=|P|\int_{0}^{1}\int_{(0, 1)^{d-1}}g\big(r\eta(\bm{t})\big)\psi(\bm{t})r^{d-1}\mathrm{d}\bm{t}\ \mathrm{d}r\label{intS} \\
		\int_Fg(x)\,\mathcal H^{d-1}(\mathrm{d}x)
		&=\sqrt{|P_{-1}^TP_{-1}|}\int_{(0, 1)^{d-1}}g\big(\eta(\bm{t})\big)\psi(\bm{t})\,\mathrm{d}\bm{t}\label{intF}
	\end{align}
	where $\psi(t_1, \ldots, t_{d-1})=\prod_{i=1}^{d-2}t_i^{d-1-i}$, and
	\[
		\eta(t_1,\ldots,t_{d-1})
		=(1-t_1)p_1+t_1(1-t_2)p_2+\cdots+\Big(\prod_{i=1}^{d-2}t_i\Big)(1-t_{d-1})p_{d-1}+\Big(\prod_{i=1}^{d-1}t_i\Big)p_d.
	\]
	Furthermore, for $i=1,\ldots,d$, we have
	\begin{equation}
		\frac{\partial}{\partial r_i}
		\int_{S}g(x)\mathrm{d}x
		=\frac{1}{r_i}|P|\int_{(0, 1)^{d-1}}g\big(\eta_i(\bm{t})\big)\widehat{\psi}(\bm{t})\,\mathrm{d}\bm{t}\label{DintS},
	\end{equation}
	where $\widehat{\psi}(t_1,\ldots,t_{d-1})=(1-t_1)\psi(t_1,\ldots,t_{d-1})$, and $\eta_i$ denotes $\eta$ after swapping $p_1$ and $p_i$.
	Finally, if also $g\in \mathcal{C}^1(\R^d, \R)$,
	\begin{align}
		\frac{\partial}{\partial r_i}\int_{F}g(x)\, \mathcal H^{d-1}(\mathrm{d}x)
		&=\frac{1}{2}\tr\Big((P_{-1}^TP_{-1})^{-1}\frac{\partial}{\partial r_i}(P_{-1}^TP_{-1})\Big)\int_{(0, 1)^{d-1}}g\big(\eta(\bm{t})\big)\psi(\bm{t})\,\mathrm{d}\bm{t}\nonumber \\
		&\quad+\sqrt{\lvert P_{-1}^TP_{-1}\rvert}\int_{(0, 1)^{d-1}}\big\langle\nabla g\big(\eta_i(\bm{t})\big), q_i\big\rangle\widehat{\psi}(\bm{t})\diff{\bm{t}}.\label{DintF}
	\end{align}
\end{theorem}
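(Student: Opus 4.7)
My plan is to establish \eqref{intS} and \eqref{intF} by iterated changes of variables, and then derive \eqref{DintS} and \eqref{DintF} by differentiating under the integral in parametrizations whose domain is independent of $r_i$. For the volume integral I compose three maps: the affine isomorphism $\mu\mapsto\sum_j\mu_jp_j$ from the standard $d$-simplex $\Sigma_d=\{\mu\ge 0,\,\sum_j\mu_j\le 1\}$ onto $S$ with Jacobian $|P|$; the radial decomposition $\mu=r\lambda$ with $r=\sum_j\mu_j\in(0,1)$ and $\lambda$ on the barycentric simplex $\{\lambda\ge 0,\sum_j\lambda_j=1\}$ contributing $r^{d-1}$; and the stick-breaking parametrization of the barycentric simplex by $\bm{t}\in(0,1)^{d-1}$, whose triangular Jacobian equals $\psi(\bm{t})$ by a direct determinant computation. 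Since $\sum_j\lambda_j(\bm{t})p_j=\eta(\bm{t})$ by construction, \eqref{intS} follows. Formula \eqref{intF} is obtained analogously, except that the initial step is now the parametrization $(\lambda_2,\ldots,\lambda_d)\mapsto p_1+\sum_{j\ge 2}\lambda_j(p_j-p_1)$ of $F$, whose tangent Gram determinant is precisely $\sqrt{|P_{-1}^TP_{-1}|}$.

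To prove \eqref{DintS}, I differentiate in the intermediate representation $\int_Sg\,\d x=|P|\int_{\Sigma_d}g\big(\sum_j\mu_jr_jq_j\big)\d\mu$. Writing $|P|=r_1\cdots r_d|Q|$ with $Q=(q_1,\ldots,q_d)$ gives $\partial_{r_i}|P|=|P|/r_i$, while $\partial_{r_i}g(x(\mu))=\mu_i\langle\nabla g,q_i\rangle$. The key identity is $\partial_{\mu_i}g(x(\mu))=r_i\langle\nabla g,q_i\rangle$, which lets me substitute $\mu_i\langle\nabla g,q_i\rangle=r_i^{-1}(\partial_{\mu_i}(\mu_ig)-g)$; the two resulting copies of $\int g\,\d\mu$ cancel, and Fubini in $\mu_i$ (with vanishing contribution at $\mu_i=0$) reduces the remainder to an integral over the top facet $\{\sum_j\mu_j=1\}$ weighted by $1-\sum_{j\neq i}\mu_j$. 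Parametrizing this facet by stick-breaking with index $i$ playing the role of ``first''---so that $\mu_i=1-t_1$ and $x=\eta_i(\bm{t})$---produces the weight $(1-t_1)\psi(\bm{t})=\widehat\psi(\bm{t})$ and completes the identity.

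For \eqref{DintF} I differentiate \eqref{intF} factor by factor. The $r_i$-dependence of the Gram factor yields the trace term via Jacobi's formula $\partial_{r_i}\det A=\det A\cdot\tr(A^{-1}\partial_{r_i}A)$ applied to $A=P_{-1}^TP_{-1}$. For the integrand, $\partial_{r_i}\eta(\bm{t})=\lambda_i(\bm{t})q_i$, where $\lambda_i(\bm{t})$ is the stick-breaking coefficient of $p_i$; hence the derivative contributes $\sqrt{|P_{-1}^TP_{-1}|}\int\lambda_i(\bm{t})\langle\nabla g(\eta(\bm{t})),q_i\rangle\psi(\bm{t})\,\d\bm{t}$. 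Recasting this integral in symmetric barycentric coordinates on the simplex and performing the measure-preserving index swap $1\leftrightarrow i$ converts $\lambda_i$ into $\lambda_1=1-t_1$ and $\eta$ into $\eta_i$, yielding the desired $\int\langle\nabla g(\eta_i),q_i\rangle\widehat\psi\,\d\bm{t}$. The main obstacle I anticipate is precisely this bookkeeping around the swap: recognizing that the appearance of $\eta_i$ together with the extra $(1-t_1)$-weight in both \eqref{DintS} and \eqref{DintF} is a uniform manifestation of the permutation symmetry of the barycentric measure---arising either from the Fubini boundary contribution on the top facet (for the volume derivative) or from a direct change of variables on the barycentric simplex (for the surface derivative).
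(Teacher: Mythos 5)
Your derivations of \eqref{intS} and \eqref{intF} via stick-breaking/barycentric coordinates, and of \eqref{DintF} via Jacobi's formula plus $\partial_{r_i}\eta = \lambda_i q_i$ followed by a relabeling of the vertices, are essentially the same as the paper's (the paper realizes \eqref{intF} through an explicit isometric embedding of $F$ into $\R^{d-1}$ and a dimension-reduction to \eqref{intS}, but the Gram-determinant computation is equivalent to your tangent-frame argument).

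The real divergence is in \eqref{DintS}. The paper works geometrically: it observes that perturbing $r_i$ to $r_i + h$ changes $S$ by a thin simplex $S\triangle S'$ with vertices $p_1,\ldots,p_d$ and $\frac{r_i+h}{r_i}p_i$, applies \eqref{intS} to this simplex (after translating $p_i$ to the origin, which yields the factor $\frac{h}{r_i}\lvert P\rvert$), then divides by $h$ and passes to the limit by dominated convergence; a subsequent substitution $u=rt_1$ collapses the extra radial integral and produces the weight $\widehat\psi$. You instead differentiate under the integral in the barycentric representation, exploit the chain-rule identity $\partial_{\mu_i}g(x(\mu)) = r_i\langle\nabla g, q_i\rangle$ to rewrite $\mu_i\langle\nabla g, q_i\rangle$ as $r_i^{-1}(\partial_{\mu_i}(\mu_i g) - g)$, cancel the two $\int g$ terms, and reduce to a facet integral by integration by parts in $\mu_i$. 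This is a clean and genuinely different route, and it correctly recovers the $\widehat\psi$ weight and the swap $p_1\leftrightarrow p_i$ as the boundary term at $\mu_i = 1-\sum_{j\neq i}\mu_j$.

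However, your argument for \eqref{DintS} uses $\nabla g$ throughout and therefore only establishes the identity for $g\in\mathcal{C}^1$, whereas the theorem (and the paper's proof) asserts \eqref{DintS} for merely continuous $g$; only \eqref{DintF} carries the $\mathcal{C}^1$ hypothesis. To close this gap you would either need a separate approximation argument (approximate $g$ uniformly on a compact set by smooth functions and show that the difference quotients in $r_i$ converge uniformly over such a family, which essentially amounts to re-deriving the paper's symmetric-difference bound), or you would need to switch to the paper's geometric comparison argument outright for this step. As written, the proposal proves a slightly weaker version of \eqref{DintS}.
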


\begin{proof}
	We first note that $\eta$ is simply repeated linear interpolation. That is, if $L(t; x, y)\coloneqq x+t(y-x)$ for $t\in(0, 1)$ and $x, y\in\R^d$, we have
	\[
		\eta(t_1, \ldots, t_{d-1})
		=L\Big(t_1; p_1, L\big(t_2; p_2, L(t_3; p_3\ldots)\big)\Big).
	\]
	As such, for any point $x\in\interior F$, there exists a unique $\bm{t}\in(0, 1)^d$ such that $\eta(\bm{t})=x$, and similarly for any $y\in\interior S$, a unique $r\in(0, 1)$ such that $r\eta(\bm{t})=y$.
	As such, to verify \eqref{intS} and \eqref{intF}, we only need to show that the functions
	\[
		(r, \bm{t})\mapsto|P|\psi(\bm{t})r^{d-1},\qquad\text{and}\qquad
		\bm{t}\mapsto\sqrt{|P_{-1}^TP_{-1}|}\psi(\bm{t}),
	\]
	denote the Jacobian and Gramian of $(r, \bm{t})\mapsto r\eta(\bm{t})$ and $\eta$, respectively.
	To this end, we introduce the following notation: let $t_d\equiv0$, $T_1=1$ and set $T_i=\prod_{j=1}^{i-1}t_j$ for $i=2,\ldots,d$.
	Then we may write $\eta(t_1,\ldots,t_{d-1})=\sum_{i=1}^{d}T_i(1-t_i)p_i$, whereby
	\[
		\frac{\partial}{\partial t_i}r\eta(t_1,\ldots,t_{d-1})
		=r\Big(\frac{\eta(t_1,\ldots,t_{d-1})-\sum_{j=1}^{i-1}T_j(1-t_j)p_j-T_ip_i}{t_i}\Big),\quad i=1,\ldots,d-1,
	\]
	while of course $\frac{\partial}{\partial r}r\eta=\eta$.
	From this it follows by determinant properties that the Jacobian of $(r, \bm{t})\mapsto r\eta(\bm{t})$ is given by
	\[
		|\mathcal J(r, \bm{t})|
		=r^{d-1}\Big(\prod_{i=1}^{d-1}\frac{1}{t_i}\Big)\Big(\prod_{i=1}^{d}T_i\Big)|P|
		=|P|\psi(\bm{t})r^{d-1},
	\]
	showing \eqref{intS} as desired.

	To show \eqref{intF}, we note that since $F$ lies in a $d-1$-dimensional hyperplane, say $H$, we may embed it in $\R^{d-1}$ by an isometry $\Psi\colon H\to\R^{d-1}$ with $\Psi(p_1)=0$.
	Specifically, $\Psi$ can be constructed as $x\mapsto\Psi'\big(A(x-p_1)\big)$, where $A$ is the rotation matrix such that the $d$'th coordinate of $Ax$ is $0$ for all $x\in H$, and $\Psi'\colon\R^d\to\R^{d-1}$ simply discards the last coordinate.
	Then, integrating a function $g$ over $F$ with respect to $\mathcal H^{d-1}$ is equivalent to integrating $g\circ\Psi^{-1}$ over $\Psi(F)$ with respect to the $d-1$-dimensional Lebesgue measure.
	Now, since $\Psi(F)$ by the above construction of $\Psi$ is a simplex in $\R^{d-1}$ consisting of the origin and $d-1$ other points, say $p_1',\ldots,p_{d-1}'$, it follows by the above,
	\[
		\int_{F'}g\circ\Psi^{-1}(x)\mathrm{d}x
		=|P'|\int_{0}^{1}\int_{(0, 1)^{d-2}}g\circ\Psi^{-1}\big(r\eta'(\bm{t})\big)\psi'(\bm{t})r^{d-1}\diff{\bm{t}}\ \mathrm{d}r,
	\]
	where $\eta'$ similarly is linear interpolation between $p_1',\ldots,p_{d-1}'$, $\psi'$ is the $d-1$-dimensional equivalent of $\psi$ and $P'$ is the $(d-1)\times(d-1)$ matrix whose $i$'th column is $p_i'$.
	By some elementary substitutions and renaming of variables, we may write in an abuse of notation $\Psi^{-1}\big(r\eta'(\bm{t})\big)=\eta(\bm{t})$ and $\psi'(\bm{t})r^{d-1}=\psi(\bm{t})$.
	Finally, to find $|P'|$, we see
	\[
		|P'|
		=(d-1)!\Vol_{d-1} F'
		=(d-1)!\Vol_{d-1} F
		=\sqrt{|P_{-1}^TP_{-1}|},
	\]
	where we use that $\Psi$ is an isometry and hence preserves volumes.

	To show \eqref{DintS}, let $i\in\{1,\ldots,d\}$ be fixed and consider now for some small $h$ the simplex $S'$ with a vertex at the origin and at the points $p_1,\ldots,p_{i-1},\frac{r_i+h}{r_i}p_i,p_{i+1},\ldots,p_d$.
	Since either $S\subseteq S'$ or $S'\subseteq S$, the symmetric difference $S\triangle S'$ is another simplex with vertices at $p_1,\ldots,p_d$ and $\frac{r_i+h}{r_i}p_i$.
	Assume without loss of generality that $S\subseteq S'$.
	Shifting the coordinate system so that $p_i$ lies at the origin, we get a simplex with a vertex at the origin and at the points $p_1-p_i,p_2-p_i,\ldots,\frac{h}{r_i}p_i,\ldots,p_d-p_i$.
	Note that by properties of the determinant, we get
	\[
		\Big\lvert\mat{p_1-p_i & p_2-p_i & \cdots & \frac{h}{r_i}p_i & \cdots & p_d-p_i}\Big\rvert
		=\frac{h}{r_i}\lvert P \rvert.
	\]
	Using this and \eqref{intS}, we find that
	\begin{align*}
		\int_{S'}g(x)\mathrm{d}x-\int_Sg(x)\mathrm{d}x
		=\int_{S'\setminus S}g(x)\mathrm{d}x
		=\frac{h}{r_i}|P|\int_{0}^{1}\int_{(0, 1)^{d-1}}g\big(r\eta_{i,h}(\bm{t})+p_i\big)r^{d-1}\psi(\bm{t})\diff{\bm{t}}\ \mathrm{d}r,
	\end{align*}
	where, using the same notation as earlier,
	\[
		\eta_{i, h}(t_1, \ldots, t_{d-1})
		=T_i(1-t_i)\frac{h}{r_i}p_i+\sum_{j\neq i}T_j(1-t_j)(p_j-p_i).
	\]
	Dividing by $h$ and letting $h\to0$ (implicitly using dominated convergence and the continuity of $g$), we thus find
	\begin{equation}
		\frac{\partial}{\partial r_i}
		\int_{S}g(x)\mathrm{d}x
		=\frac{1}{r_i}|P|\int_{0}^{1}\int_{(0, 1)^{d-1}}g\big(r\eta_{i, 0}(\bm{t})+p_i\big)r^{d-1}\psi(\bm{t})\diff{\bm{t}}\diff{r}.\label{DintS_bad}
	\end{equation}
	Noting that
	\[
		\sum_{j\neq i}T_j(1-t_j)
		=\sum_{j\neq i}^{d-1}(T_j-T_{j+1})+T_d
		=T_1-(T_i-T_{i+1})
		=1-T_i(1-t_i),
	\]
	we find
	\begin{align*}
		r\eta_{i, 0}(t_1,\ldots,t_{d-1})+p_i
		&=r\Big(\sum_{j\neq i}T_j(1-t_j)(p_j-p_i)\Big)+p_i \\
		&=r\Big(\sum_{j\neq i}T_j(1-t_j)p_j-\big(1-T_i(1-t_i)\big)p_i\Big)+p_i \\
		&=r\eta(t_1,\ldots,t_{d-1})+(1-r)p_i,
	\end{align*}
	which together with \eqref{DintS_bad} yields
	\[
		\frac{\partial}{\partial r_i}
		\int_{S}g(x)\mathrm{d}x
		=\frac{1}{r_i}|P|\int_{0}^{1}\int_{(0, 1)^{d-1}}g\big(r\eta(\bm{t})+(1-r)p_i\big)r^{d-1}\psi(\bm{t})\diff{\bm{t}}\diff{r}.
	\]
	At this point we remark that the ordering of the points $p_1,\ldots,p_d$ is arbitrary and has no influence on the value of the above integrals.
	As such, we may swap the places of two points, say $p_1$ and $p_i$, and thus replacing $\eta$ by $\eta_i$ in the above integral.
	From this, we see
	\[
		r\eta_i(t_1,\ldots,t_{d-1})+(1-r)p_i
		=(1-rt_1)p_i+rt_1(1-t_2)p_2+\cdots+rt_1\Big(\prod_{j=2}^{d-1}t_j\Big)p_d
		=\eta_i(rt_1,\ldots,t_{d-1}).
	\]
	Thus, making the substitution $u=rt_1$ in the above integral, we get
	\begin{align*}
		\int_{0}^{1}\int_{(0, 1)^{d-1}}g\big(r\eta(\bm{t})+(1-r)p_i\big)r^{d-1}\psi(\bm{t})\diff{\bm{t}}\diff{r}
		&=\int_{0}^{1}\int_{(0, 1)^{d-2}}\int_{0}^{r}g\big(\eta_i(u, \bm{t})\big)\psi(u, \bm{t})\mathrm{d}u \diff{\bm{t}} \diff{r} \\
		&=\int_{(0, 1)^{d-2}}\int_{0}^{1}g\big(\eta(u, \bm{t})\big)\psi(u, \bm{t})(1-u)\diff{u} \diff{\bm{t}} \\
		&=\int_{(0, 1)^{d-1}}g\big(\eta_i(\bm{t})\big)\widehat{\psi}(\bm{t})\diff{\bm{t}},
	\end{align*}
	which shows \eqref{DintS}.
	Finally, to show \eqref{DintF}, we have by Jacobi's formula, 
	\[
		\frac{\partial}{\partial r_i}|P_{-1}^TP_{-1}|
		=|P_{-1}^TP_{-1}|\ \tr\Big((P_{-1}^TP_{-1})^{-1}\frac{\partial}{\partial r_i}(P_{-1}^TP_{-1})\Big),
	\]
	whereby
	\begin{align*}
		\frac{\partial}{\partial r_i}\int_{F}g(x)\, \mathcal H^{d-1}(\mathrm{d}x)
		&=\frac{1}{2}\tr\Big((P_{-1}^TP_{-1})^{-1}\frac{\partial}{\partial r_i}(P_{-1}^TP_{-1})\Big)\int_{(0, 1)^{d-1}}g\big(\eta(\bm{t})\big)\psi(\bm{t})\,\mathrm{d}\bm{t} \\
		&\qquad+\sqrt{\lvert P_{-1}^TP_{-1}\rvert}\frac{\partial}{\partial r_i}\Big(\int_{(0, 1)^{d-1}}g\big(\eta_i(\bm{t})\big)\psi(\bm{t})\diff{\bm{t}}\Big) \\
		&=\frac{1}{2}\tr\Big((P_{-1}^TP_{-1})^{-1}\frac{\partial}{\partial r_i}(P_{-1}^TP_{-1})\Big)\int_{(0, 1)^{d-1}}g\big(\eta(\bm{t})\big)\psi(\bm{t})\,\mathrm{d}\bm{t}\nonumber \\
		&\qquad+\sqrt{|P_{-1}^TP_{-1}|}\int_{(0, 1)^{d-1}}\big\langle\nabla g\big(\eta_i(\bm{t})\big), q_i\big\rangle\widehat{\psi}(\bm{t})\diff{\bm{t}},
	\end{align*}
	where we in the last equality used $\frac{\partial}{\partial r_i}\eta_i(t_1, \ldots, t_{d-1})=(1-t_1)q_i$.
\end{proof}

When the dynamics of the process are known, that is we have access to the potential $V$, the theorem allows us to find explicit expressions for the right hand side of \eqref{eq:approx_obj} and \eqref{eq:approx_gradient} given  $N$ points on the sphere and therefore to numerically optimize within the space of polytope approximations to star-shaped sets via gradient descent. For the two-dimensional case, strikingly simple  expressions can be derived from this and, given sufficient regularity of the star-shaped domain, the approximation rate in \eqref{eq:approx_obj} is at most of order $1/N$.

\begin{corollary}\label{coro:twodim}
Let $r\in \mathcal{C}^2([0, 2\pi], [r_{\min}, r_{\max}])$ with $0<r_{\min}\le r_{\max}<\infty$ be some periodic radial function, and denote by $D\subseteq\R^2$ the set with $\partial D=r([0, 2\pi])$.
For any $N\in\N$, let $r_i=r(\frac{2 i\pi}{N})$, $q_i=(\cos\frac{2i\pi}{N},\sin\frac{2i\pi}{N})$ and $p_i=r_iq_i$ and denote by $\widetilde{D}\subseteq\R^2$ the simplex with vertices $\{p_i\}_{i=1}^N$.
	Finally, for $i=1,\ldots,N$ and $t\in(0,1)$, let $\eta_i^+(t)=p_i+t(p_{i+1}-p_i)$ and $\eta_i^-(t)=p_i+t(p_{i-1}-p_i)$, where we identify $p_{0}=p_N$ and $p_{N+1}=p_1$.
	Then, there exists a constant $K\ge0$ such that
	\[
		|J(D)-J(\widetilde{D})|
		\le\frac{K}{N},
	\]
 where for $\tilde{\rho} = \e^{-V}$ we have the explicit representations
	\[
		J(\widetilde{D})
		=\frac{1}{C}\sum_{i=1}^{N}\Big(
			\sin\frac{2\pi}{N}r_ir_{i+1}\int_{0}^{1}\int_{0}^{1}(f\tilde{\rho})\big(r\eta_i^+(t)\big)r \diff{r} \diff{t}
			+\kappa\n{p_{i+1}-p_{i}}\int_{0}^{1}\tilde{\rho}\big(\eta_i^+(t)\big)\diff{t}
		\Big),
	\]
	and
	\begin{align*}
		\frac{\partial J(\widetilde{D})}{\partial r_i}
		=\frac{1}{C} \int_0^1 \big(\psi^+(t)\tilde\rho\big(\eta_i^+(t)\big) + \psi^-(t)\tilde\rho\big(\eta_i^-(t)\big)\big) \diff{t}, 
    \end{align*}
	where 
    \[\psi^\pm(t) = \big(\sin\frac{2\pi}{N}r_{i\pm1}\big(f\big(\eta_i^\pm(t)\big)-J(\widetilde{D})\big)-\kappa\n{p_{i\pm1}-p_i}\big\langle \nabla V\big(\eta_i^\pm(t)\big), q_i\big\rangle
					\big)(1-t)
					+\frac{\kappa(r_i-\cos\frac{2\pi}{N}r_{i\pm1})}{\n{p_i-p_{i\pm1}}},\]
    and 
	\[
		C
		=\sum_{i=1}^{N}
			\sin\frac{2\pi}{N}r_ir_{i+1}\int_{0}^{1}\int_{0}^{1}\tilde\rho\big(r\eta_i^+(t)\big)r \diff{r} \diff{t}.
	\]
\end{corollary}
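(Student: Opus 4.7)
My plan is to derive the explicit formulas by direct specialization of Theorem \ref{theo:numeric} to $d=2$, and to obtain the $O(1/N)$ approximation bound from a standard piecewise-linear approximation estimate for $\mathcal{C}^2$ star-shaped domains.

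For the explicit formulas, in $d=2$ each simplex $S_I$ is a triangle with vertices $0,p_i,p_{i+1}$ and each facet $F_I$ is the chord $[p_i,p_{i+1}]$, so the sum over $\mathcal{I}$ becomes the sum $\sum_{i=1}^N$. Since $d-1=1$, the weight $\psi$ is the empty product $1$, $\widehat\psi(t)=1-t$, and the linear interpolation $\eta(t)=(1-t)p_i+tp_{i+1}$ coincides with $\eta_i^+(t)$. A direct calculation with $q_i=(\cos\tfrac{2i\pi}{N},\sin\tfrac{2i\pi}{N})$ yields $|P|=r_ir_{i+1}\sin\tfrac{2\pi}{N}$ and $\sqrt{|P_{-1}^TP_{-1}|}=\|p_{i+1}-p_i\|$. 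Substituting $g=f\tilde\rho$ into \eqref{intS} and $g=\tilde\rho$ into \eqref{intF} and summing over $i$ then produces the announced formulas for $J(\widetilde D)$ and $C$.

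For the partial derivative $\partial J(\widetilde D)/\partial r_i$, only the two triangles sharing vertex $p_i$—those built on $(0,p_i,p_{i+1})$ and $(0,p_{i-1},p_i)$—contribute. In the first triangle, $p_i$ plays the role of $p_1$ in Theorem \ref{theo:numeric}, so the theorem's $\eta_i$ equals our $\eta_i^+$; in the second, $p_i$ plays the role of $p_2$, and the canonical swap of $p_1,p_2$ turns $\eta_i$ into our $\eta_i^-$. Applying \eqref{DintS} to the interior terms, the factor $|P|/r_i$ reduces to $r_{i\pm1}\sin\tfrac{2\pi}{N}$ and produces the $(1-t)$-weighted interior pieces of $\psi^\pm$. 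For the facet terms, I expand the trace term in \eqref{DintF} using the law of cosines $\|p_{i\pm1}-p_i\|^2=r_i^2+r_{i\pm1}^2-2r_ir_{i\pm1}\cos\tfrac{2\pi}{N}$, whose $r_i$-derivative equals $2(r_i-r_{i\pm1}\cos\tfrac{2\pi}{N})$; this yields the constant-in-$t$ contribution $\kappa(r_i-\cos\tfrac{2\pi}{N}r_{i\pm1})/\|p_i-p_{i\pm1}\|$ in $\psi^\pm$. The gradient term in \eqref{DintF}, combined with $\nabla\tilde\rho=-\tilde\rho\nabla V$, accounts for the $\langle\nabla V(\eta_i^\pm(t)),q_i\rangle$ piece. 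The quotient rule $\partial J/\partial r_i=C^{-1}(\partial N/\partial r_i - J\,\partial C/\partial r_i)$ then regroups everything into the stated form.

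The approximation bound follows from a classical geometric estimate. Since $r\in\mathcal{C}^2$, its angular piecewise-linear interpolant at the $N$ equidistant nodes differs from $r$ by $O(\|r''\|_\infty/N^2)$ uniformly. Combined with $r\ge r_{\min}>0$, this gives a Hausdorff distance $O(1/N^2)$ between $\partial D$ and $\partial\widetilde D$, an area discrepancy $|D\triangle\widetilde D|=O(1/N^2)$, and arclength discrepancy of the same order. Since $f$ and $\tilde\rho=\e^{-V}$ are $\mathcal{C}^1$ and bounded on a compact neighborhood of $\overline{D\cup\widetilde D}$, while $\int_D\tilde\rho\,\diff x\ge\lebesgue(D)\inf_{\overline D}\tilde\rho>0$ is bounded below, a quotient-rule argument upgrades these to $|J(D)-J(\widetilde D)|=O(1/N^2)\le K/N$ for some $K$ depending on $\|r\|_{\mathcal{C}^2}, r_{\min}, r_{\max}, \kappa$, and the $\mathcal{C}^1$-norms of $V$ and $f$ on a ball containing $\overline D$. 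The only genuinely delicate step is the bookkeeping in the derivative computation—especially tracking the $p_1\leftrightarrow p_i$ swap for the second neighboring triangle; the approximation-theoretic estimate is standard and I do not expect any real obstacle there.
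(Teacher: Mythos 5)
Your derivation of the explicit formulas by specializing Theorem \ref{theo:numeric} to $d=2$ is exactly the route the paper takes (it simply declares this part ``an immediate consequence''), and your bookkeeping is correct: in particular you correctly identify $\lvert P\rvert = r_ir_{i+1}\sin\tfrac{2\pi}{N}$, $\sqrt{\lvert P_{-1}^TP_{-1}\rvert} = \n{p_{i+1}-p_i}$, the $p_1\leftrightarrow p_i$ swap for the two neighboring triangles, and the law-of-cosines derivative $\frac{\partial}{\partial r_i}\n{p_{i\pm1}-p_i}^2 = 2(r_i-r_{i\pm1}\cos\tfrac{2\pi}{N})$ that produces the constant-in-$t$ term of $\psi^\pm$.

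For the approximation bound you take a genuinely different route from the paper. The paper works entirely in polar coordinates: it writes down the explicit piecewise formula for the polygon's radial function $\widetilde r(\theta)=\frac{c}{m\cos\theta-\sin\theta}$ on each sector, bounds $\widetilde r,\widetilde r',\widetilde r''$ uniformly, and then derives the pointwise bounds $\lvert r-\widetilde r\rvert\vee\lvert r'-\widetilde r'\rvert\lesssim 1/N$ from Lipschitz continuity and the mean value theorem, yielding $O(1/N)$ with explicit constants. You instead argue geometrically through Hausdorff distance, symmetric difference, and arclength discrepancy, claiming the sharper rate $O(1/N^2)$. That rate is plausible (and would of course imply the stated $O(1/N)$), but your sketch glosses over the most delicate step: the quantity you actually need to control in the boundary term is the \emph{weighted} arclength integral $\int_{\partial D}\tilde\rho\,\mathcal{H}^1(\diff x)-\int_{\partial\widetilde D}\tilde\rho\,\mathcal{H}^1(\diff x)$, and this does not follow from ``total arclength discrepancy $O(1/N^2)$'' plus ``Hausdorff distance $O(1/N^2)$'' alone; it requires a piece-by-piece comparison of each chord against the arc it subtends, tracking simultaneously the local length ratio (which deviates from $1$ by $O(1/N^2)$) and the positional offset. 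Similarly, $\widetilde r$ is not the piecewise-linear interpolant of $r$ in $\theta$ --- it is the polar parametrization of the inscribed polygon, a different function --- so the interpolation bound $O(\lVert r''\rVert_\infty/N^2)$ applies to $\widetilde r$ only after a short extra argument (both $\widetilde r$ and the interpolant agree at the nodes and are each $O(1/N^2)$ from $r$). These gaps are fillable, but as written your argument is considerably less self-contained than the paper's; the trade-off is that the paper's more elementary polar-coordinate computation produces only the $O(1/N)$ rate but keeps everything explicit.
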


\begin{proof}
	We show only the first claim, the rest is an immediate consequence of Theorem \ref{theo:numeric}.
	To this end, we assume for convenience that $N\ge4$ and introduce the following notation: for $\theta\in[0, 2\pi]$ and any radial function $\widehat{r}\colon [0, 2\pi]\to(0, \infty)$ which is $\mathcal{C}^2$ almost everywhere, let $p(\theta)=(\cos\theta, \sin\theta)$ and $\widehat{\bm{r}}(\theta)=(\widehat{r}(\theta), \widehat{r}'(\theta))$ when the derivative exists.
	Also, for any continuous function $g\colon \R^2\to\R$, write
	\[
		I(\widehat{r}, g)
		\coloneqq\int_{0}^{2\pi}\int_{0}^{\widehat{r}(\theta)}g\big(sp(\theta)\big)s\diff{s} \diff{\theta},\quad\text{and}\quad
		S(\widehat{r}, g)
		\coloneqq\int_{0}^{2\pi}g\big(\widehat{r}(\theta)p(\theta)\big)\n{\widehat{\bm{r}}(\theta)}\diff{\theta}.
	\]
	It is then straightforward to check that
	\[
		J(D)
		=\frac{I(r, f\tilde\rho)+\kappa S(r, \tilde\rho)}{I(r, \tilde\rho)}.
	\]
	Letting $\widetilde{r}\colon[0, 2\pi]\to[r_{\min}, r_{\max}]$ denote the radial function corresponding to $\widetilde{D}$, we then have
	\[
		|J(D)-J(\widetilde{D})|
		=\frac{1}{I(r, \tilde\rho)}\Big|(I(r, f\tilde\rho)-I(\widetilde{r}, f\tilde\rho))+\kappa(S(r, \tilde\rho)-S(\widetilde{r}, \tilde\rho))+J(\widetilde{D})(I(r, \tilde\rho)-I(\widetilde{r}, \tilde\rho))\Big|.
	\]
	By our assumptions on $r$, it follows that $I(r, \tilde\rho)$ is bounded from below and $J(\widetilde{D})$ from above.
	As such, to show the desired convergence rate, we only need to show that for any continuous $g\colon\R^2\to\R$,
	\begin{equation}\label{cor:conv_rate_1}
		|I(r, g)-I(\widetilde{r}, g)|\vee|S(r, \tilde\rho)-S(\widetilde{r}, \tilde\rho)|
		\le\frac{K_1}{N}		
	\end{equation}
	for some $K_1\ge0$.
	In fact, by dominating with $\sup_{x\in B(0, r_{\max})}|g(x)|$, it suffices to show \eqref{cor:conv_rate_1} with $g\equiv1$.
	This follows if we show for $i=1,\ldots,N$
	\begin{equation}\label{cor:conv_rate_2}
		|r(\theta)-\widetilde{r}(\theta)|\vee|r'(\theta)-\widetilde{r}'(\theta)|
		\le\frac{K_2}{N},\quad\theta\in\big(\tfrac{2(i-1)\pi}{N}, \tfrac{2i\pi}{N}\big)
	\end{equation}
	for some $K_2\ge0$ independent of $i$.
	Indeed, if this is the case, we have first,
	\[
		|I(r, 1)-I(\widetilde{r}, 1)|
		\le\sum_{i=1}^{N}\int_{\frac{2(i-1)\pi}{N}}^{\frac{2i\pi}{N}}\Big|\int_{\widetilde{r}(\theta)}^{r(\theta)}s\ \mathrm{d}s\Big|\ \mathrm{d}\theta
		\le r_{\max}\sum_{i=1}^{N}\int_{\frac{2(i-1)\pi}{N}}^{\frac{2i\pi}{N}}|r(\theta)-\widetilde{r}(\theta)|\ \mathrm{d}\theta
		\le \frac{2\pi r_{\max}K_2}{N},
	\]
	and similarly, since $\n{\bm{r}-\widetilde{\bm{r}}}=\sqrt{(r-\widetilde{r})^2+(r'-\widetilde{r}')^2}\le\sqrt{2}\frac{K_2}{N}$,
	\begin{align*}
		\lvert S(r, \tilde\rho)-S(\widetilde{r}, \tilde\rho) \rvert
		&\le\sum_{i=1}^N\int_{\frac{2(i-1)\pi}{N}}^{\frac{2i\pi}{N}}\big(\n{\bm{r}(\theta)}\big\lvert\tilde\rho\big(r(\theta)p(\theta)\big)-\tilde\rho\big(\widetilde{r}(\theta)p(\theta)\big)\big\rvert +\tilde\rho\big(\widetilde{r}(\theta)p(\theta)\big)\big\lvert \n{\bm{r}(\theta)}-\n{\widetilde{\bm{r}} (\theta)}\big\rvert\big)\mathrm{d}\theta \\
		&\le K_3\sum_{i=1}^{N}\int_{\frac{2(i-1)\pi}{N}}^{\frac{2i\pi}{N}} \big(\lvert r(\theta)-\widetilde{r}(\theta)\rvert +\n{\bm{r}(\theta)-\widetilde{\bm{r}}(\theta)} \big)\diff{\theta}
		\le\frac{2\pi(1+\sqrt{2})K_2K_3}{N},
	\end{align*}
	where
	\[
		K_3
		=\Big(\big(\sup_{\theta\in[0, 2\pi]}\n{\bm{r}(\theta)}\sup_{x\in \operatorname{cl}B(0, r_{\max})}\n{\nabla\tilde\rho(x)}\big)\vee\sup_{x\in \operatorname{cl} B(0, r_{\max})}\tilde\rho(x)\Big)
	\]
	whereby we may choose $K_1=2\pi K_2(r_{\max}\vee(1+\sqrt{2})K_3)$.
	To verify \eqref{cor:conv_rate_2}, we first find by some straightforward trigonometry, identifying $r_0=r_N$,
	\[
		\widetilde{r}(\theta)
		=\sum_{i=1}^{N}\frac{r_i(m_i\cos\frac{2i\pi}{N}-\sin\frac{2i\pi}{N})}{m_i\cos\theta-\sin\theta}\bm{1}_{[\frac{2(i-1)\pi}{N},\frac{2i\pi}{N})}(\theta),\quad\text{where}\quad
		m_i
		=\frac{r_i\sin\frac{2i\pi}{N}-r_{i-1}\sin\frac{2(i-1)\pi}{N}}{r_i\cos\frac{2i\pi}{N}-r_{i-1}\cos\frac{2(i-1)\pi}{N}}.
	\]
 Here, when $r_i\cos\frac{2i\pi}{N}=r_{i-1}\cos\frac{2(i-1)\pi}{N}$ we set $m_i=\infty$ and treat $\tilde{r}$ in a limiting sense. 
	Now, for $i\in\{1, \ldots, N\}$ and $\theta\in(\frac{2(i-1)\pi}{N}, \frac{2i\pi}{N})$, let $\varphi_i(\theta)$ denote the $i$'th summand in $\widetilde{r}(\theta)$.
	Let also $\psi_i(\theta)=\frac{m_i\sin\theta+\cos\theta}{m_i\cos\theta-\sin\theta}$, such that $\varphi_i'(\theta)=\varphi_i(\theta)\psi_i(\theta)$.
	Further, since $\psi_i'(\theta)=1+\psi_i(\theta)^2$, we find also $\varphi_i''(\theta)=\varphi_i(\theta)(1+2\psi_i(\theta)^2)$, whereby to bound $\widetilde{r}$, $\widetilde{r}'$ and $\widetilde{r}''$ (when the latter two exist), we need only bound $\varphi_i$ and $\psi_i$ for all $i$.
	Clearly, we have $\varphi_i(\theta)\le r_{\max}$, and since $\psi_i'(\theta)>0$, we must have $|\psi_i(\theta)|\le|\psi_i(\frac{2(i-1)\pi}{N})|\vee|\psi_i(\frac{2i\pi}{N})|$.
	Some simple, albeit tedious, applications of trigonometric identities show that
	\[
		\psi_i\Big(\frac{2(i-1)\pi}{N}\Big)
		=-\frac{r_{i-1}-r_i\cos\frac{2\pi}{N}}{r_i\sin\frac{2\pi}{N}}
		\qquad\text{and}\qquad
		\psi_i\Big(\frac{2i\pi}{N}\Big)
		=\frac{r_i-r_{i-1}\cos\frac{2\pi}{N}}{r_{i-1}\sin\frac{2\pi}{N}},
	\]
	and since e.g. $\lvert r_i-r_{i-1}\cos\frac{2\pi}{N}\rvert\le\cos\frac{2\pi}{N}\lvert r_i-r_{i-1}\rvert +r_i(1-\cos\frac{2\pi}{N})$, we find by the Lipschitz-continuity of $r$ and  cosine,
	\[
		\lvert\psi_i(\theta)\rvert
		\le\frac{2\pi}{Nr_{\min}}\Big(\frac{1}{\tan\frac{2\pi}{N}}\sup_{\widehat{\theta}\in[0, 2\pi)}\lvert r'(\widehat{\theta})\rvert+r_{\max}\Big)
		\lesssim\frac{\sup_{\widehat{\theta}\in[0, 2\pi]}\lvert r'(\widehat{\theta})\rvert}{r_{\min}}.
	\]
        Here we use that $\frac{x}{\tan x}\in[0, 1]$ for $0\le x\le\frac{\pi}{2}$, and hence $\frac{2\pi}{N\tan\frac{2\pi}{N}}\in[0, 1]$ for $N\ge4$.
	Since this bound depends neither on $i$ nor $\theta$, we find for all $\theta\in[0, 2\pi)\setminus\{\frac{2i\pi}{N}\}_{i=1}^N$
	\[
		\widetilde{r}(\theta)
		\le r_{\max},\quad
		\lvert \widetilde{r}'(\theta)\rvert
		\lesssim\frac{r_{\max}}{r_{\min}}\sup_{\widehat{\theta}\in[0, 2\pi]}|r'(\widehat{\theta})|,\quad\text{and}\quad
		|\widetilde{r}''(\theta)|
		\lesssim r_{\max}\Big(1+2\Big[\frac{1}{r_{\min}}\sup_{\widehat{\theta}\in[0, 2\pi]}|r'(\widehat{\theta})|\Big]^2\Big).
	\]
	Thus we have by Lipschitz-continuity for $i\in\{1, \ldots, N\}$ and $\theta\in[\frac{2(i-1)\pi}{N}, \frac{2i\pi}{N})$, since $r(\frac{2(i-1)\pi}{2})=r_{i-1}=\widetilde{r}(\frac{2(i-1)\pi}{N})$
	\[
		\lvert r(\theta)-\widetilde{r}(\theta)\rvert
		\le\lvert r(\theta)-r_{i-1}|+|r_{i-1}-\widetilde{r}(\theta)\rvert
		\le\frac{2\pi(1+\frac{r_{\max}}{r_{\min}})\sup_{\widehat{\theta}\in[0, 2\pi]}\lvert r'(\widehat{\theta})\rvert}{N}
	\]
	Furthermore, since $\widetilde{r}$ and $r$ agree on $\{\frac{2i\pi}{N}\}_{i=1}^N$, it follows by the mean value theorem that there exist $\theta^*,\widetilde{\theta}^*\in(\frac{2(i-1)\pi}{N},\frac{2i\pi}{N})$, such that $r'(\theta^*)=\widetilde{r}'(\widetilde{\theta}^*)$.
	Thus we also have
	\begin{align*}
		\lvert r'(\theta)-\widetilde{r}'(\theta)\rvert 
		&\le\lvert r'(\theta)-r'(\theta^*)|+|\widetilde{r}'(\widetilde{\theta}^*)-\widetilde{r}'(\theta) \rvert\\
		&\le\frac{2\pi(\sup_{\widehat{\theta}\in[0, 2\pi]}\lvert r''(\widehat{\theta})\rvert+r_{\max}(1+2(\frac{1}{r_{\min}}\sup_{\widehat{\theta}\in[0, 2\pi]}\lvert r'(\widehat{\theta})\rvert)^2))}{N},
	\end{align*}
	and the proof is finished by setting
	\[
		K_2
		=2\pi\Big(\big((1+\tfrac{r_{\max}}{r_{\min}})\sup_{\widehat{\theta}\in[0, 2\pi]}\lvert r'(\widehat{\theta})\rvert\big)\vee\big(\sup_{\widehat{\theta}\in[0, 2\pi]}\lvert r''(\widehat{\theta})\rvert+r_{\max}(1+2(\tfrac{1}{r_{\min}}\sup_{\widehat{\theta}\in[0, 2\pi]}\lvert r'(\widehat{\theta})\rvert)^2)\big)\Big).
	\]
\end{proof}

This now implies for $d=2$ that for suitable admissible domain families $\Theta \ni D$, the infimum of $D \mapsto J(D)$ over $\Theta$ is well approximated by the infimum over the polytope approximations. 
\begin{corollary}
Let $d=2$ and let $\Theta$ be the family of domains $D$ that are strongly starhaped at $0$ and are identified by $\mathcal{C}^2$ periodic radial functions $r_D \colon [0,2\pi] \to (0,\infty)$ such that for some global constants $\underline{\lambda},\overline{\lambda},\Lambda$,
\[\underline{\lambda} \leq r_D \leq \overline{\lambda} \quad \text{and} \quad \max_{\theta \in [0,2\pi]} (\lvert r^\prime_D(\theta) \rvert + \lvert r^{\prime \prime}_D(\theta) \rvert) \leq \Lambda.\] 
Then, letting $\tilde{D}_N$ be the polytope approximation of $D$ from Corollary \ref{coro:twodim}, it holds 
\[\big\lvert \inf_{D \in \Theta} J(\tilde{D}_N) - \inf_{D \in \Theta} J(D) \big\rvert \lesssim 1/N.\]
\end{corollary}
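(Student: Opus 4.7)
The plan is to upgrade Corollary \ref{coro:twodim}, which provides a pointwise bound $|J(D) - J(\tilde{D}_N)| \leq K(D)/N$ for each individual $D$, to a uniform bound $K_0/N$ that holds simultaneously for all $D \in \Theta$, and then apply a standard two-sided infimum comparison.

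First, I would inspect the proof of Corollary \ref{coro:twodim} to identify exactly how the constant $K$ there depends on $D$. Tracing through, $K$ is built from $r_{\min}, r_{\max}$, $\sup_\theta \lvert r'(\theta) \rvert$, $\sup_\theta \lvert r''(\theta)\rvert$, together with $\lVert \tilde\rho \rVert_{L^\infty(\overline{B(0,r_{\max})})}$, $\lVert |\nabla \tilde\rho| \rVert_{L^\infty(\overline{B(0,r_{\max})})}$, $\lVert f \rVert_{L^\infty(\overline{B(0,r_{\max})})}$, the lower bound on the normalizing integral $I(r,\tilde\rho)$, and $\kappa$. The assumptions defining $\Theta$ — namely $\underline{\lambda} \leq r_D \leq \overline{\lambda}$ and $\max_\theta(\lvert r_D'(\theta)\rvert + \lvert r_D''(\theta)\rvert) \leq \Lambda$ — precisely control the $D$-dependent quantities, and the potential $V$ together with $f$ are fixed. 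In particular, the lower bound on the normalization is $I(r_D,\tilde\rho) \geq \pi\underline{\lambda}^2 \inf_{\overline{B(0,\overline{\lambda})}} \tilde\rho > 0$ since $V \in \mathcal{C}^2$ and the domain is bounded, while $J(\tilde{D}_N)$ is bounded above uniformly by $\lVert f\rVert_{L^\infty(\overline{B(0,\overline{\lambda})})} + \kappa \cdot 2\pi \overline{\lambda} \cdot \lVert \tilde\rho \rVert_{L^\infty}/( \pi\underline{\lambda}^2 \inf \tilde\rho)$. Plugging these uniform bounds into the explicit expressions for $K_1, K_2, K_3$ in the proof of Corollary \ref{coro:twodim} yields a single constant $K_0 = K_0(\underline{\lambda},\overline{\lambda},\Lambda,V,f,\kappa)$ such that
\begin{equation*}
\sup_{D \in \Theta} \lvert J(D) - J(\tilde{D}_N) \rvert \leq \frac{K_0}{N}, \quad N \geq 4.
\end{equation*}

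With the uniform bound in hand, the corollary follows from an elementary two-sided $\varepsilon$-argument. For arbitrary $D \in \Theta$,
\begin{equation*}
J(\tilde{D}_N) \leq J(D) + \tfrac{K_0}{N} \quad \text{and} \quad J(D) \leq J(\tilde{D}_N) + \tfrac{K_0}{N}.
\end{equation*}
Taking $\inf_{D \in \Theta}$ in each inequality gives $\inf_\Theta J(\tilde{D}_N) \leq \inf_\Theta J(D) + K_0/N$ and $\inf_\Theta J(D) \leq \inf_\Theta J(\tilde{D}_N) + K_0/N$, which combine to the claim.

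The main obstacle is the uniformity check: the proof of Corollary \ref{coro:twodim} involves several $D$-dependent estimates (the $\psi_i$ bound using $\lVert r'\rVert_\infty/r_{\min}$, the Lipschitz constants depending on $\lVert r''\rVert_\infty$, and the bounds on $\tilde\rho, \nabla\tilde\rho$ over the closed ball of radius $r_{\max}$). Each of these must be revisited and replaced by its worst case over $\Theta$; none of them require new techniques, but care must be taken that the lower bound on $I(r_D,\tilde\rho)$ does not degenerate — which is guaranteed by $\underline{\lambda} > 0$ together with the strict positivity of $\tilde\rho = \mathrm{e}^{-V}$ on the compact ball $\overline{B(0,\overline{\lambda})}$. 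Once this uniform constant is secured, the comparison of infima is immediate.
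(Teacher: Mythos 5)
Your proposal is correct and takes essentially the same approach as the paper: both establish (implicitly in the paper, explicitly in your argument) that the constant $K$ from Corollary \ref{coro:twodim} is bounded uniformly over $\Theta$ by the hypotheses $\underline{\lambda}\le r_D\le\overline{\lambda}$ and $|r_D'|+|r_D''|\le\Lambda$, and then perform a standard two-sided comparison of infima. The only cosmetic difference is that you take infima directly on both sides of $|J(D)-J(\tilde D_N)|\le K_0/N$, whereas the paper runs a near-minimizer $\varepsilon$-argument and lets $\varepsilon\downarrow0$; the two are equivalent.
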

\begin{proof} 
For given $\varepsilon > 0$ choose $D^\varepsilon \in \Theta$ s.t.\ $J(D^\varepsilon) \leq \inf_{D \in \Theta} J(D) + \varepsilon$. Then, by Corollary \ref{coro:twodim},
\[\inf_{D \in \Theta} J(\tilde{D}_N) - \inf_{D \in \Theta} J(D) \leq \inf_{D \in \Theta} J(\tilde{D}_N) - J(D^\varepsilon) + \varepsilon \leq J(\tilde{D^\varepsilon}_N) - J(D^\varepsilon) + \varepsilon \leq \sup_{D \in \Theta} (J(\tilde{D}_N) - J(D)) + \varepsilon \lesssim \frac{1}{N} + \varepsilon.\]
Similarly, letting $D^\varepsilon \in \Theta$ such that $J(\tilde{D^\varepsilon}_N) \leq \inf_{D \in \Theta} J(\tilde{D}_N) + \varepsilon$, we have,
\[\inf_{D \in \Theta} J(D) - \inf_{D \in \Theta} J(\tilde{D}_N) \leq   J(D^\varepsilon)- J(\tilde{D^\varepsilon}_N) + \varepsilon \leq \sup_{D \in \Theta} \lvert J(\tilde{D}_N) - J(D) \rvert + \varepsilon \lesssim \frac{1}{N} + \varepsilon.\] 
Taking together both bounds and letting $\varepsilon \downarrow 0$ therefore gives the result.
\end{proof}

In the  simple case of a pure Brownian motion, corresponding to $\e^{-V}\equiv1$, and given radially symmetric costs $f=\n{\cdot}$, one expects the optimal reflection boundary to be a sphere centered at $0$. Optimizing the corresponding cost functional over the space of such balls only, gives the optimization problem a parametric structure that can be easily solved  analytically to reveal the optimal ball to be $D^*=B(0, r^\ast)$, where $r^\ast=\sqrt{(d+1)\kappa}$. It is now interesting to test our method with regard to two questions: does the numerical optimization over the more general class of star-shaped domains support our intuition by identifying a ball as the optimal reflection domain, and if so, do we obtain a good approximation of the optimal radius $r^\ast=\sqrt{(d+1)\kappa}$ as well? The result for different choices of $\kappa$  is visualized in  Figure \ref{fig:optimal_radii}, giving an affirmative answer to both questions.
\begin{figure}[h]
	\centering
	\includegraphics[width=0.6\textwidth]{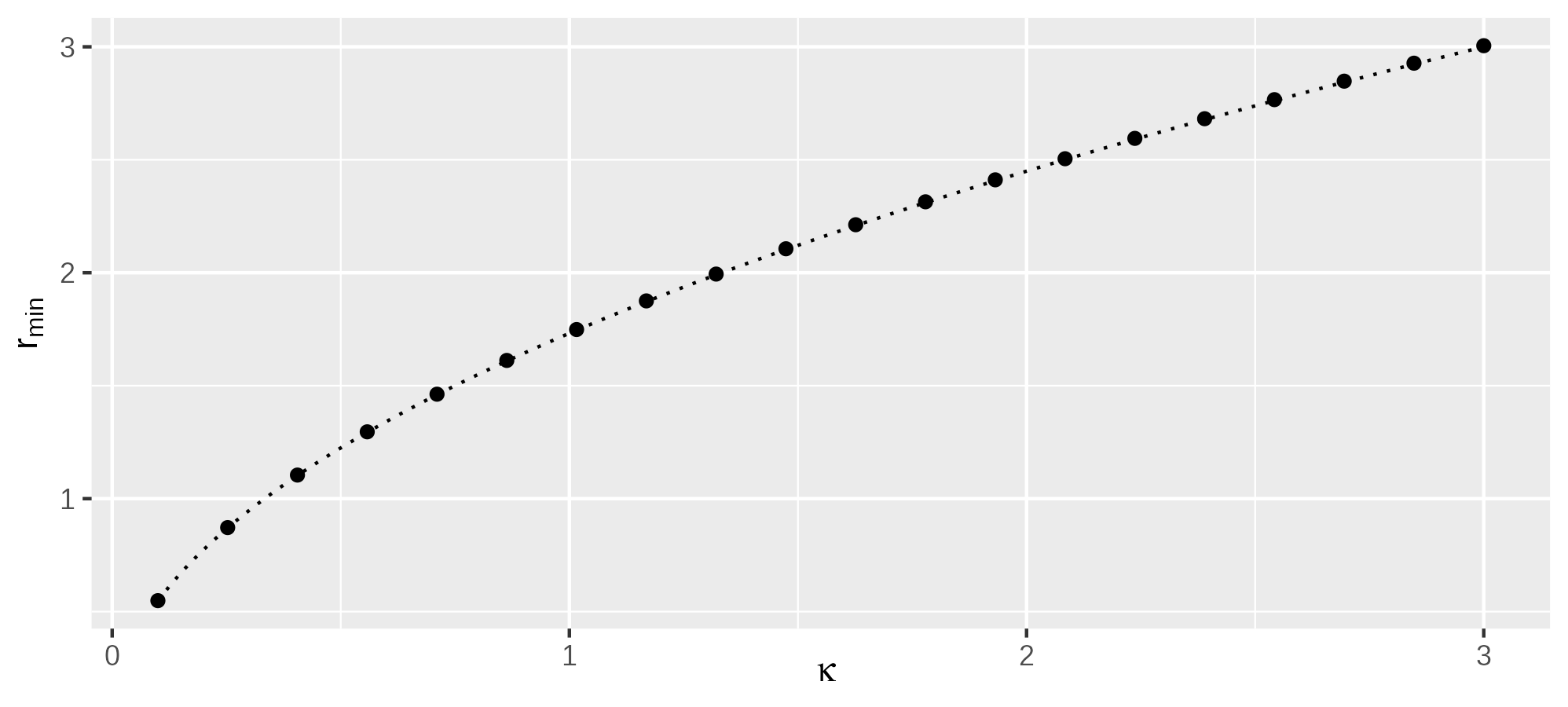}
	\caption{For each value of $\kappa$, we use a quasi-Newton method for finding the optimal shape according to the gradients above.
 Each time, a ball is indeed identified as the optimal shape and we plot its radius as a function of $\kappa$.
 Finally the dotted line represents the optimal line $r^\ast=\sqrt{3\kappa}$.}
 \label{fig:optimal_radii}
\end{figure}

Our method is also well-equipped for handling more challenging non-symmetric situations, where it is hard to make an educated guess on the optimal shape.
As such, we also test the method on reflected Ornstein--Uhlenbeck processes with strong correlation, in particular the processes
\[
	\mathrm{d}X_t^D
	=AX_t^D \,\mathrm{d}t+\sqrt{2}\,\mathrm{d}W_t+n(X_t^D)\,\mathrm{d}L_t^D,\quad
	A
	=\mat{1 & 0.9 \\ 0.9 & 1}^{-1},
\]
as well as a skewed cost-function, namely $f(x, y)=\sqrt{x^2+5y^2}$.
The found approximately optimal shapes can be seen in Figure \ref{fig:simulated}. Here,  for each shape we take $N=50$, $\kappa=1$ and use a quasi-Newton method to find the optimal shape with starting values $r_i=1$ for $i=1,\ldots,50$.
The found optimal shapes correspond to, from top-left to bottom-right, Brownian motion with norm cost, Ornstein--Uhlenbeck process with norm cost, Brownian motion with skewed cost and Ornstein--Uhlenbeck process with skewed cost.
\begin{figure}[ht]
	\centering
	\includegraphics[width=0.6\textwidth]{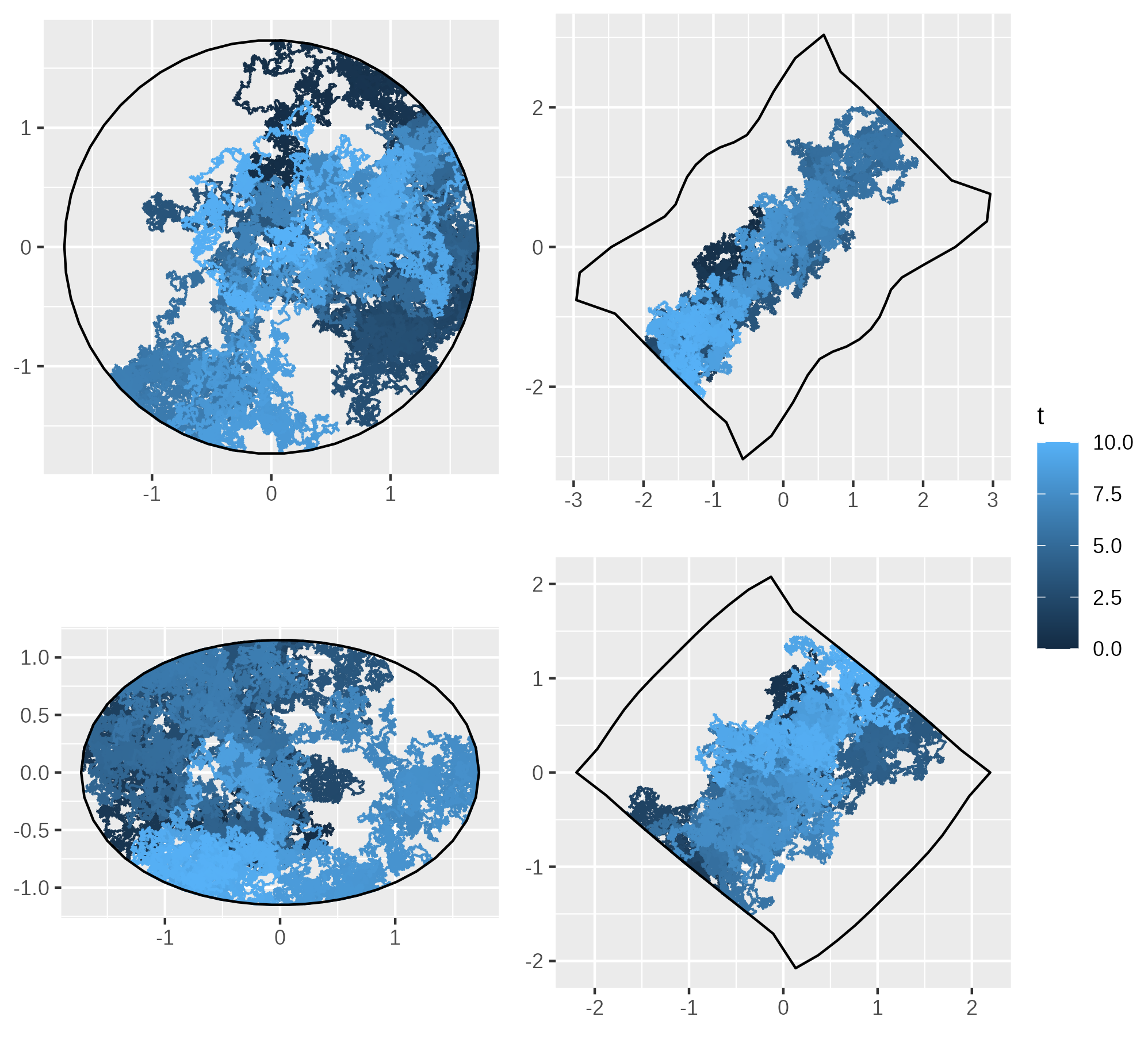}
	\caption{
    Simulated optimal shapes and corresponding path realizations of reflected processes.
	}
 \label{fig:simulated}
\end{figure}
Finally, Figure \ref{fig:simulated} also shows simulations of the above reflected processes in these approximately optimal shapes to asses the convergence of the realized costs towards the theoretical objective function.
To do this, we use the usual Euler scheme, where we then project the process onto $D$ whenever it leaves the domain.
The distance of this projection is then the associated simulated increase in local time, cf.\ \cite{slo94}. We simulate the relevant processes  with time-steps of $10^{-4}$ until time $T=100$ (but plot only until $T=10$ for visual clarity). The average realized cost in comparison to the expected average long run costs are given in Table \ref{tab:sim}.

\begin{table}[h]
\centering
\begin{tabular}{l | c | c }
& norm cost function & skewed cost function \\\hline
Brownian motion & $2.22$ ($2.31$) & $2.83$ ($2.91$) \\ \hline 
Ornstein--Uhlenbeck & $1.18$ ($1.15$) & $1.66$ ($1.74$) 
\end{tabular}
\caption{Average realized costs vs.\ expected average long term costs (in brackets)}
\label{tab:sim}
\end{table}

\section{Learning the optimal boundary}\label{sec:learning}
We now turn to the challenging situation, when the dynamics of the unconstrained Langevin diffusion are unknown, which makes it impossible to set the  optimization algorithm from the previous section into motion, without feeding it information based on collected data first. As apparent from the explicit form of the cost functional given in Corollary \ref{coro:J_stat}, a natural data-driven reflection procedure  can be based on a plug-in approach, provided that we have an efficient estimator of (functionals of) the invariant density of the unconstrained Langevin diffusion at our disposal. 

\subsection{Adaptive nonparametric estimation of the invariant density}
As in the scalar case discussed in \cite{christensen21}, we employ a kernel estimator of the invariant density, whose $\sup$-norm risk given appropriate conditions on the diffusion coefficients is well-understood in a general context by now. In the following, we will concentrate on a class of potentials $V$ s.t.\ the  process satisfies certain functional inequalities. This setting is quite natural given the reversible nature of Langevin diffusions and is studied in \cite{str18}, where minimax optimal estimation rates for a Lepski type \textit{adaptive} kernel estimator are established under \textit{anisotropic} Hölder smoothness assumptions on the invariant density. To recall these results, some preliminary definitions are necessary.

Let $\pi$ be the invariant distribution of $X$ with density $\rho \propto \exp(-V)$ and let $\mathrm{L}$ be the $L^2(\pi)$-generator of $X$ with domain $\mathcal{D}(\mathrm{L})$. Since $-\mathrm{L}$ is self-adjoint and nonnegative on the Hilbert space $L^2(\pi)$ endowed with the inner product $\langle f, g\rangle_{\pi} \coloneqq \int fg \diff{\pi}$, we may define $\sqrt{-\mathrm{L}}$ via spectral calculus and note that for any $f \in \mathcal{D}(\sqrt{-\mathrm{L}})$, we have $\lVert \sqrt{-\mathrm{L}} f \rVert_{\pi}^2 = -\langle \mathrm{L}f,f \rangle_{\pi}$, which for $f \in \mathcal{C}^2_c(\R^d)$ is equal to $\lVert \nabla f \rVert_{\pi}^2$. We treat diffusion models satisfying the following conditions.

\begin{definition} 
\begin{enumerate}
\item[{\crtcrossreflabel{(PI)}[def:poinc]}] $X$ satisfies a \textit{Poincaré inequality}  with constant $C_{\mathrm{P}}$ if, for any $f \in \mathcal{D}(\sqrt{-\mathrm{L}})$, 
\[\mathrm{Var}_{\pi}(f) \coloneqq \pi(f^2) - \pi(f)^2 \leq C_{\mathrm{P}}\lVert \sqrt{-\mathrm{L}} f \rVert_{\pi}^2.\]
\item[{\crtcrossreflabel{(NI)}[def:nash]}] $X$ satisfies a \textit{Nash inequality} with constants $C^1_{\mathrm{N}}, C^2_{\mathrm{N}}$ if, for any $f \in \mathcal{D}(\sqrt{-\mathrm{L}})$, 
\[\lVert f \rVert^{d+2}_{\pi} \leq \Big(C^1_{\mathrm{N}} \lVert f \rVert^2_{\pi} + C^2_{\mathrm{N}} \lVert \sqrt{-\mathrm{L}} f \rVert^2_{\pi} \Big)^{d/2} \lVert f \rVert^2_{L^1(\pi)}.\]
\end{enumerate}
Denote by $\Sigma(C_{\mathrm{P}}, C^1_{\mathrm{N}}, C^2_{\mathrm{N}})$ the class of potentials $V\colon \R^d \to \R^d$ s.t.\ the corresponding Langevin diffusion satisfies \ref{def:poinc} and \ref{def:nash}.
\end{definition}
\begin{remark} 
A Nash inequality is called \textit{tight} if $C_{\mathrm{N}}^1 = 1$, in which case it implies a \textit{logarithmic Sobolev inequality} and therefore also a Poincaré inequality. 
\end{remark}
The combination of a Poincaré inequality with a Nash inequality is particularly attractive from a statistical point of view. A Poincaré inequality is equivalent to exponential ergodicity in $L^2(\pi)$. More precisely, for any $f \in L^2(\pi)$ with $\pi(f) = 0$, $\lVert P_t f \rVert_{\pi} \lesssim \exp(-t/C_{\mathrm{P}}) \lVert f \rVert_{\pi}$, which  enforces a fast mixing behavior of the diffusion. A Nash inequality on the other hand is equivalent to \textit{ultracontractivity} expressed through the heat kernel bound $\lVert P_t \rVert_{L^1(\pi) \to L^\infty(\pi)} \lesssim t^{-d/2}$ for $t \in (0,1]$, cf.\ \cite[Theorem 6.3.1]{bakry14}. The key observation for the statistical approach is that the combination of both yields tight variance bounds for path integrals $\int_0^t f(X_s) \diff{s}$ of functionals $f \in L^2(\pi)$, which allows for efficient control of the stochastic fluctuations of kernel estimators. 

To control the bias, we impose anisotropic Hölder regularity conditions on the invariant density. 

\begin{definition} 
Let $\bm{\beta} = (\beta_1,\ldots,\beta_d) \in (0,\infty)^d$, $\bm{\mathcal{L}} = (\mathcal{L}_1,\ldots,\mathcal{L}_d) \in (0,\infty)^d$. A function $h\colon \R^d \to \R$ is said to belong to the anisotropic Hölder class $\mathcal{H}_d(\bm{\beta},\bm{\mathcal{L}})$ if, for all $i=1,\ldots,d$, 
\begin{align*} 
\lVert D^k_i g \rVert_\infty &\leq \mathcal{L}_i, \quad k = 1,\ldots, \lfloor \beta_i \rfloor,\\ 
\lVert D_i^{\lfloor \beta_i \rfloor} g(\cdot + te_i) - D_i^{\lfloor \beta_i \rfloor} g(\cdot) \rVert_\infty &\leq \mathcal{L}_i \lvert t \rvert^{\beta_i - \lfloor \beta_i \rfloor}, \quad t \in \R,
\end{align*}
where $\lfloor \beta \rfloor$ denotes the largest integer strictly smaller than $\beta > 0$. Denote by 
\[\mathbb{H}_d(\bm{\beta},\bm{\mathcal{L}}) = \mathbb{H}_d(\bm{\beta},\bm{\mathcal{L}};\, C_\infty, C_{\mathrm{P}}, C_{\mathrm{N}}^1, C_{\mathrm{N}}^2),\] 
the set of invariant densities $\rho_V \in \mathcal{H}_d(\bm{\beta} + \one,\bm{\mathcal{L}})$ s.t.\ $\lVert \rho_V \rVert_\infty \leq C_\infty$ and $V \in \Sigma(C_{\mathrm{P}}, C^1_{\mathrm{N}}, C^2_{\mathrm{N}})$.
\end{definition}

Let $K\colon \R \to \R$ be a symmetric Lipschitz kernel function with $\supp(K) \subset [-1/2,1/2]$ and $\int K(x) \diff{x} = 1$. We say that $K$ is of order $\ell \in \N$ if $\int x^m K(x) \diff{x} = 0$ for any $m = 0,1,\ldots,\ell$. For $h > 0$ we let $K_h(\cdot) \coloneqq h^{-1} K(\cdot/h)$ and for $\bm{h},\bm{\eta} \in (0,\infty)^d$ we set 
\[\mathbb{K}_{\bm{h}}(x) \coloneqq \prod_{i=1}^d K_{h_i}(x_i), \quad x \in \R^d,\] 
and 
\[\mathbb{K}_{\bm{h}} \star \mathbb{K}_{\bm{\eta}}(x) \coloneqq \prod_{i=1}^d K_{h_i} \ast K_{\eta_i}(x_i), \quad x \in \R^d.\] 
We now define the following kernel estimators given a continuous record $(X_t)_{t \in [0,T]}$ s.t.\ $\rho_V \in \mathbb{H}(\bm{\beta},\bm{L})$: 
\[\hat{\rho}_{\bm{h},T}(x) \coloneqq \frac{1}{T}\int_0^T \mathbb{K}_{\bm{h}}(x-X_s) \diff{s}, \quad \hat{\rho}_{\bm{h},\bm{\eta},T}(x) \coloneqq \frac{1}{T}\int_0^T \mathbb{K}_{\bm{h}} \star \mathbb{K}_{\bm{\eta}}(x-X_s) \diff{s}, \qquad x \in \R^d.\]
In order to efficiently estimate $\rho_V$ via $\hat{\rho}_{\bm{h},T}$, the bandwidth $\bm{h}$ has to be carefully chosen to achieve an optimal balance between bias and variance of the kernel estimator. If the Hölder smoothness parameter $\bm{\beta}$ is unknown, the bias cannot be evaluated directly, which poses the fundamental challenge to design a fully data-driven/adaptive bandwith selection procedure to obtain a rate-optimal but possibly random bandwidth $\hat{\bm{h}}_T$.

As it turns out, in dimension $d = 2$ this problem is significantly simplified since in this case a tight variance bound of the kernel estimator only depends logarithmically on the bandwidth. Consequently, the smoothness independent and deterministic bandwith choice 
\[\hat{\bm{h}}_T \sim T^{-1/2}(1,1),\] 
yields the optimal $\sup$-norm estimation rate $\log T/\sqrt{T}$ given that the order of $K$ is chosen large enough. In dimension $d \geq 3$, the situation is significantly more involved and the bandwidth is chosen according to the following Lepski type selection rule:

Let $q \geq 1$ and  the set of candidate bandwidths $\mathcal{H}_T = \mathcal{H}_T^{(q)}$ be given by 
\[\mathcal{H}_T \coloneqq \Big\{\bm{h} \in (0,1]^d: T\mathfrak{a}_\circ^2  \geq \prod_{j=1}^d h_j^{(2/d)-1}\log T \Big\}.\] 
Here, $\mathfrak{a}_{\circ} = \mathfrak{a}_\circ^{(q)} \coloneqq (2 \Lambda)^{-2}$, where $\Lambda = \Lambda^{(q)} \coloneqq \gamma^\circ_{2q}(d,\lVert K \rVert_\infty;\, C_{\mathrm{P}}, C^1_{\mathrm{N}}, C^2_{\mathrm{N}})$ and the functional $\gamma^\circ_p$, $p \geq 1$, is defined in \cite[Remark 3.5]{str18} based on the functional $\gamma_p$ in \cite{lepski13}. Denote 
\[\overline{\varsigma}_T \coloneqq 2\Big(1\vee \sup_{\bm{h} \in \mathcal{H}_T} \Big\lVert T^{-1} \int_0^T \lvert  \mathbb{K}_{\bm{h}}(X_s - \cdot) \rvert \diff{s}\Big\rVert_\infty \Big),\]
and set 
\[\hat{\Delta}_T(\bm{h}) \coloneqq \sup_{\bm{\eta} \in \overline{\mathcal{H}}_T} \big\{\big[\lVert \hat{\rho}_{\bm{h},\bm{\eta}} - \hat{\rho}_{\bm{\eta}} \rVert_\infty - \lambda \hat{A}_T(\bm{\eta}) \big]_+ \big\},\] 
where $\overline{\mathcal{H}}_T \subset \mathcal{H}_T$ is the dyadic grid in $\mathcal{H}_T$, $\lambda = \lambda^{(q)} \coloneqq (1 \vee \lVert K \rVert_{L^1(\lebesgue)}^d) \Lambda$, and 
\[\hat{A}_T(\bm{h}) \coloneqq \prod_{i=1}^d h_i^{1/d - 1/2} \sqrt{\frac{\overline{\varsigma}_T \log T}{T}}.\] 
We now specify the bandwidth $\hat{\bm{h}}_T = \hat{\bm{h}}_T^{(q)}$ by 
\[\hat{\Delta}_T(\hat{\bm{h}}_T) + \lambda \hat{A}_T(\hat{\bm{h}}_T) =\inf_{\bm{h} \in \overline{\mathcal{H}}_T} \big\{\hat{\Delta}_T(\bm{h}) + \lambda \hat{A}_T(\bm{h}) \big\}.\]
Finally, for $q \geq 1$, $d \geq 2$ and $\hat{\bm{h}}_T = \hat{\bm{h}}_T^{(q)}$ given as above, we set 
\[\hat{\rho}_{\hat{\bm{h}}_T,T}(x) = \frac{1}{T}\int_0^T \mathbb{K}_{\hat{\bm{h}}_T}(x - X_s) \diff{s}, \quad x \in \R^d.\] 
According to the discussion in \cite[Section 3]{str18} on the two-dimensional case and \cite[Theorem 3.4]{str18}, we now have the following uniform $\sup$-norm estimation  result. 

\begin{theorem}\label{theo:est}
Suppose $\bm{\beta} \in (0,\mathfrak{b}]^d$ for some $\mathfrak{b} \in \N \cap[2,\infty)$ and let $K$ have order $\mathfrak{b}+1$. Then, for any $q \geq 1$ and $\bm{\mathcal{L}} \in (0,\infty)^d$ it holds 
\[\sup_{\rho_V \in \mathbb{H}_d(\bm{\beta},\bm{\mathcal{L}})} \Big(\E^\pi\big[\big\lVert\hat{\rho}_{\hat{\bm{h}}_T,T} - \rho_V \big\rVert_\infty^q\big] \Big)^{1/q} = \mathcal{O}(\Psi_{d,\bm{\beta}}(T)),\] 
where for the harmonic mean smoothness $\overline{\bm{\beta} + \one} \coloneqq (d^{-1}\sum_{i=1}^d \frac{1}{\beta_i +1})^{-1}$, the rate $\Psi_{d,\bm{\beta}}$ is specified by 
\[\Psi_{d,\bm{\beta}}(T) \coloneqq \begin{cases} \frac{\log T}{\sqrt{T}}, &d =2,\\ \Big(\frac{\log T}{T}\Big)^{\frac{\overline{\bm{\beta} + \one}}{2\overline{\bm{\beta}+ \one}+d-2}}, &d \geq 3. \end{cases}\]
\end{theorem}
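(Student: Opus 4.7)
The plan is to reduce the statement to the framework established in \cite{str18}, by decomposing the error $\hat{\rho}_{\hat{\bm{h}}_T,T} - \rho_V$ into a bias and a stochastic fluctuation component and carefully balancing them via the Lepski type bandwidth selector defined above. The two key probabilistic ingredients are that \ref{def:poinc} yields exponential $L^2(\pi)$-mixing for the semigroup $(P_t)_{t \geq 0}$ generated by $X$, and \ref{def:nash} yields ultracontractivity in form of the heat kernel bound $\lVert P_t \rVert_{L^1(\pi) \to L^\infty(\pi)} \lesssim t^{-d/2}$ for small $t$. Together, these functional inequalities translate via the spectral calculus arguments of \cite{lepski13} into sharp variance bounds for additive functionals $\int_0^T g(X_s)\diff{s}$ of bounded $g \in L^2(\pi)$, which is precisely the setting needed to control the stochastic error of $\hat{\rho}_{\bm{h},T}(x)$ uniformly in $x$.

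The first step is to control the deterministic bias $\E^{\pi}[\hat{\rho}_{\bm{h},T}(x)] - \rho_V(x) = \mathbb{K}_{\bm{h}} \ast \rho_V(x) - \rho_V(x)$. By the tensor product structure of $\mathbb{K}_{\bm{h}}$, Fubini, the assumption that $K$ has order $\mathfrak{b}+1 \geq \lfloor \beta_i + 1\rfloor$ and a coordinate-wise Taylor expansion, the anisotropic Hölder regularity $\rho_V \in \mathcal{H}_d(\bm{\beta} + \one, \bm{\mathcal{L}})$ yields the standard bound $\lVert \E^\pi[\hat{\rho}_{\bm{h},T}] - \rho_V \rVert_\infty \lesssim \sum_{i=1}^d \mathcal{L}_i h_i^{\beta_i + 1}$. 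The second step is the variance/supremum concentration estimate: combining the tight variance bound mentioned above with a standard chaining argument over a countable dense subset (exploiting Lipschitzness and compact support of $K$) produces, for $q \geq 1$, a moment bound $(\E^\pi[\lVert \hat{\rho}_{\bm{h},T} - \E^\pi[\hat{\rho}_{\bm{h},T}] \rVert_\infty^q])^{1/q} \lesssim \sqrt{\tfrac{\log T}{T}}$ in $d=2$, and $\lesssim (\prod_i h_i^{2/d-1})^{1/2}\sqrt{\tfrac{\log T}{T}}$ in $d \geq 3$; the integrated quantity $\overline{\varsigma}_T$ serves as an empirical upper proxy for the variance prefactor that needs to be inserted into the Lepski penalty.

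The third and most delicate step is the oracle inequality for the data-driven bandwidth $\hat{\bm{h}}_T$. Following the Goldenshluger--Lepski paradigm, one writes for any reference $\bm{h}, \bm{\eta} \in \overline{\mathcal{H}}_T$
\begin{align*}
\lVert \hat{\rho}_{\hat{\bm{h}}_T,T} - \rho_V \rVert_\infty &\leq \lVert \hat{\rho}_{\hat{\bm{h}}_T, \bm{\eta},T} - \hat{\rho}_{\bm{\eta},T} \rVert_\infty + \lVert \hat{\rho}_{\bm{h},\bm{\eta},T} - \hat{\rho}_{\bm{h},T} \rVert_\infty + \lVert \hat{\rho}_{\bm{h},T} - \rho_V \rVert_\infty + \lVert \hat{\rho}_{\bm{\eta},T} - \rho_V \rVert_\infty,
\end{align*}
and uses the definition of $\hat{\bm{h}}_T$ through $\hat{\Delta}_T + \lambda \hat{A}_T$ to bound the first two terms by $\hat{\Delta}_T(\bm{h}) + \lambda \hat{A}_T(\hat{\bm{h}}_T) + \lambda \hat{A}_T(\bm{h})$. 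Substituting the variance/bias bounds from steps one and two, one obtains an oracle inequality of the form $(\E^\pi[\lVert \hat{\rho}_{\hat{\bm{h}}_T,T} - \rho_V\rVert_\infty^q])^{1/q} \lesssim \inf_{\bm{h} \in \overline{\mathcal{H}}_T}\{\mathrm{bias}(\bm{h}) + A_T(\bm{h})\}$. Optimizing over $\bm{h}$ and taking into account that in $d=2$ the variance term does not depend on $\bm{h}$ while for $d \geq 3$ the balance is driven by the harmonic mean smoothness, one reads off the announced rate $\Psi_{d,\bm{\beta}}(T)$.

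The main obstacle is step two, which is a deep concentration result for supremal fluctuations of path integrals under reversible diffusion dynamics, together with the delicate Lepski-type argument requiring careful handling of the random variance proxy $\overline{\varsigma}_T$ and its deviations around its expectation. As these components are worked out in full generality in \cite[Section 3 and Theorem 3.4]{str18} precisely for reversible diffusions satisfying \ref{def:poinc} and \ref{def:nash}, our proof essentially amounts to verifying that our Langevin setting fits into their framework and invoking those results; the main technical lifting is thus delegated to \cite{str18}.
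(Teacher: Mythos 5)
Your proposal is correct and follows the same route the paper takes: the paper proves Theorem \ref{theo:est} by directly citing \cite[Section~3, Theorem~3.4]{str18} for the anisotropic $\sup$-norm adaptive rate under the Poincar\'e plus Nash assumptions, which is exactly the reduction you carry out. The intermediate exposition (bias expansion of order $\sum_i \mathcal{L}_i h_i^{\beta_i+1}$, tight variance bound via \ref{def:poinc}+\ref{def:nash}, Goldenshluger--Lepski oracle inequality with the random proxy $\overline{\varsigma}_T$) accurately summarizes the internals of \cite{str18}, but the proof in the paper is simply the citation, so there is no substantive difference in approach.
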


\subsection{Data-driven estimation of the optimal reflection boundary}
In this section we consider a set of domains $\Theta \subset \mathbf{D}$ such that the set of minimizers $\argmin_{D \in \Theta} J(D)$ is well-defined and let $D^\ast \in \argmin_{D \in \Theta} J(D)$. Our data-driven procedure to determine reflection domains $\hat{D}$ whose average costs are close to the optimal ergodic costs $J(D^\ast)$ uses the following assumption.

\begin{assumption} \label{ass:stat}
\begin{enumerate}[label = (\roman*), ref = (\roman*)]
\item \label{ass:stat1} For some constants $\underline{\lambda}, \overline{\lambda},\Lambda$ it holds $B(0,\underline{\lambda}) \subset D^\ast \subset B(0,\overline{\lambda})$ and $\mathcal{H}^{d-1}(\partial D^\ast) \leq \Lambda$; 
\item we are given information on two constants  $\underline{\rho},\overline{\rho}$ such that $\underline{\rho} \leq \inf_{B(0,\overline{\lambda})} \rho \leq \sup_{B(0,\overline{\lambda})} \rho \leq \overline{\rho}$.
\end{enumerate}
\end{assumption}

Accordingly, we define the truncated invariant density estimator $\hat{\rho}^\ast_{T,q}$ based on data $(X_t)_{t \in [0,T]}$ of the uncontrolled diffusion process as 
\[\hat{\rho}^\ast_{T,q}(x) \coloneqq (\hat{\rho}_{T,q}(x) \wedge 2\overline{\rho}) \vee \underline{\rho}/2, \quad x \in \R^d,\] 
with $\hat{\rho}_{T,q}$ specified as the adaptive invariant density estimator from the previous subsection with bandwidth choice $\bm{h} = \hat{\bm{h}}_T^{(q)}$ for $q \geq 1$. Moreover, we let $\Theta(\underline{\lambda},\overline{\lambda},\Lambda) \subset \Theta$ be the subfamily of reflection domains satisfying Assumption \ref{ass:stat}.\ref{ass:stat1}. Let 
\[\hat{J}_{T,q}(D) \coloneqq \frac{1}{\int_D \hat{\rho}^\ast_{T,q}(x) \diff{x}}\Big(\int_D f(y) \hat{\rho}^\ast_{T,q}(y) \diff{y} + \kappa \int_{\partial D} \hat{\rho}_{T,q}^\ast(y) \, \mathcal{H}^{d-1}(\diff{y}) \Big), \quad D \in \Theta,\] 
be the estimator of the asymptotic costs associated to the reflection domain $D$ and define the reflection domain estimator 
\[\hat{D}_{T,q} \in \argmin_{D \in \Theta(\underline{\lambda},\overline{\lambda},\Lambda)} \hat{J}_{T,q}(D).\]
Here, we must assume that $\Theta \subset \mathbf{D}$ is a metrizable space that is sufficiently nice to allow a measurable choice of $\hat{D}_{T,q}$ considered as a random mapping into the  Borel space associated to $\Theta$. 
We now have the following concentration result for the simple regret.

\begin{proposition} \label{prop:stat_regret}
Suppose that $X_0 \sim \mu$, where $\mu \ll \pi$ with $\lVert \tfrac{\diff{\mu}}{\diff{\pi}} \rVert_{L^q(\pi)} < \infty$ for some $q \in (1,\infty]$. Then, for any $p \geq 1$, given the assumptions from Theorem \ref{theo:est} we have the regret bound
\[\E^\mu\big[\big\lvert J\big(D^\ast\big) - J\big(\hat{D}_{T,p\overline{q}}\big) \big\rvert^p \big]^{1/p} \leq C\lVert \tfrac{\diff{\mu}}{\diff{\pi}} \rVert_{L^q(\pi)}^{1/p} \Psi_{d,\bm{\beta}}(T),\]
where $\overline{q} \coloneqq q/(q-1)$ is the conjugate Hölder exponent of $q$ and $C$ depends on $\kappa,p, q, f$ and the constants from Assumption \ref{ass:stat}. 
\end{proposition}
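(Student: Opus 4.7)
The plan is to proceed by the standard oracle decomposition and reduce the regret to a $\sup$-norm bound on the invariant density estimator, then translate the $\E^\pi$-type rate from Theorem \ref{theo:est} into an $\E^\mu$-type rate via a Hölder change-of-measure argument.

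First, since both $D^\ast$ and $\hat{D}_{T,q}$ lie in $\Theta(\underline{\lambda},\overline{\lambda},\Lambda)$ (by Assumption \ref{ass:stat}\ref{ass:stat1} for $D^\ast$ and by construction for $\hat{D}_{T,q}$) and $D^\ast$ is the minimizer of $J$ over $\Theta \supset \Theta(\underline{\lambda},\overline{\lambda},\Lambda)$, I have $J(\hat{D}_{T,q}) \geq J(D^\ast)$ and $\hat{J}_{T,q}(\hat{D}_{T,q}) \leq \hat{J}_{T,q}(D^\ast)$. The usual telescoping then gives
\[0 \leq J(\hat{D}_{T,q}) - J(D^\ast) \leq 2 \sup_{D \in \Theta(\underline{\lambda},\overline{\lambda},\Lambda)} \lvert J(D) - \hat{J}_{T,q}(D) \rvert.\]

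Next, I would show that this uniform deviation is controlled linearly by $\lVert \hat{\rho}^\ast_{T,q} - \rho \rVert_\infty$ on $B(0,\overline{\lambda})$. Writing $A(D) = \int_D f \rho + \kappa \int_{\partial D} \rho \, \mathcal{H}^{d-1}(\diff{y})$ and $B(D) = \int_D \rho$, and analogously $\hat{A}(D), \hat{B}(D)$ for $\hat{\rho}^\ast_{T,q}$, the identity
\[J(D) - \hat{J}_{T,q}(D) = \frac{A(D) - \hat{A}(D)}{\hat{B}(D)} + J(D) \cdot \frac{\hat{B}(D) - B(D)}{\hat{B}(D)}\]
reduces the estimate to four ingredients: (i) the bounds $\lvert A - \hat{A} \rvert \leq (\lambda(B(0,\overline{\lambda})) \lVert f \rVert_{L^\infty(B(0,\overline{\lambda}))} + \kappa \Lambda) \lVert \hat{\rho}^\ast_{T,q} - \rho \rVert_\infty$ and $\lvert \hat{B} - B \rvert \leq \lambda(B(0,\overline{\lambda})) \lVert \hat{\rho}^\ast_{T,q} - \rho \rVert_\infty$ (using $D \subset B(0,\overline{\lambda})$ and $\mathcal{H}^{d-1}(\partial D) \leq \Lambda$); (ii) a lower bound $\hat{B}(D) \geq (\underline{\rho}/2) \lambda(B(0,\underline{\lambda}))$, which follows from truncation $\hat{\rho}^\ast_{T,q} \geq \underline{\rho}/2$ together with $B(0,\underline{\lambda}) \subset D$; and (iii) a uniform upper bound on $J(D)$ for $D \in \Theta(\underline{\lambda},\overline{\lambda},\Lambda)$ via analogous estimates. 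Here the truncation is actually cost-free: since $\underline{\rho} \leq \rho(x) \leq \overline{\rho}$ on $B(0,\overline{\lambda})$, the clipping to $[\underline{\rho}/2, 2\overline{\rho}]$ only brings $\hat{\rho}_{T,q}$ closer to $\rho$ there, so $\lVert (\hat{\rho}^\ast_{T,q} - \rho)\mathbf{1}_{B(0,\overline{\lambda})} \rVert_\infty \leq \lVert \hat{\rho}_{T,q} - \rho \rVert_\infty$.

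Combining these, there is a constant $C_1$ depending only on $\kappa, f$ and the constants in Assumption \ref{ass:stat} such that
\[\lvert J(\hat{D}_{T,q}) - J(D^\ast) \rvert \leq C_1 \lVert \hat{\rho}_{T,q} - \rho \rVert_\infty.\]
Taking $p$-th moments under $\mathbb{P}^\mu$ and applying Hölder's inequality with conjugate exponents $q$ and $\overline{q}$ to the Radon--Nikodym derivative yields
\[\E^\mu\big[\lvert J(\hat{D}_{T,q}) - J(D^\ast) \rvert^p \big] \leq C_1^p \, \big\lVert \tfrac{\diff{\mu}}{\diff{\pi}} \big\rVert_{L^q(\pi)} \, \E^\pi\big[\lVert \hat{\rho}_{T,q} - \rho \rVert_\infty^{p\overline{q}} \big]^{1/\overline{q}}.\]
Choosing $q = p\overline{q}$ in the Lepski procedure and invoking Theorem \ref{theo:est} for the stationary $L^{p\overline{q}}$-risk delivers the claimed rate after taking the $p$-th root.

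The only genuinely delicate step is the uniform lower bound on $\hat{B}(D)$, which crucially relies on the truncation and on the a priori information $B(0,\underline{\lambda}) \subset D$ built into $\Theta(\underline{\lambda},\overline{\lambda},\Lambda)$; without this, the quotient structure of $J$ would make the plug-in estimator potentially unstable and preclude a linear-in-$\lVert \hat{\rho} - \rho \rVert_\infty$ regret bound.
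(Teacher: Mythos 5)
Your proof is correct and follows essentially the same route as the paper: the triangle/telescoping bound reducing the regret to $2\sup_{D}\lvert J(D)-\hat J_{T,p\overline q}(D)\rvert$, the quotient decomposition of $J-\hat J$, the lower bound $\hat B(D)\ge (\underline\rho/2)\lebesgue(B(0,\underline\lambda))$ via truncation and $B(0,\underline\lambda)\subset D$, and the Hölder change of measure $\E^\mu[\cdot]\le\lVert\diff\mu/\diff\pi\rVert_{L^q(\pi)}\E^\pi[\cdot^{\overline q}]^{1/\overline q}$ followed by Theorem \ref{theo:est} with moment parameter $p\overline q$. The one place you diverge from the paper is the handling of the truncation error: you observe that since projection onto $[\underline\rho/2,2\overline\rho]$ is a $1$-Lipschitz contraction and $\rho(x)\in[\underline\rho,\overline\rho]\subset[\underline\rho/2,2\overline\rho]$ on $B(0,\overline\lambda)$, one has $\lvert\hat\rho^\ast_{T,q}(x)-\rho(x)\rvert\le\lvert\hat\rho_{T,q}(x)-\rho(x)\rvert$ there directly, whereas the paper instead splits on the event $\{\lVert\hat\rho_{T,p\overline q}-\rho\rVert_\infty>\underline\rho/2\}$ and uses Markov's inequality to absorb the truncation error into the rate, at the cost of an extra factor $1+4\overline\rho/\underline\rho$. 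Your contraction argument is shorter, cleaner, and gives a slightly better constant; the two approaches are otherwise logically equivalent.
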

\begin{proof} 
For fixed $D \in \Theta(\underline{\lambda},\overline{\lambda},\Lambda)$, write (in obvious notation) 
\[J(D) = A(D)/B(D), \quad \hat{J}_{T,p\overline{q}}(D) = \hat{A}_T(D)/\hat{B}_T(D).\]
Using 
\[B(D) \wedge \hat{B}_T(D) \geq \int_{B(0,\underline{\lambda})} \rho(x) \wedge \hat{\rho}^\ast_{T,p\overline{q}}(x) \diff{x} \geq \frac{\underline{\rho}}{2} \lebesgue(B(0,\underline{\lambda})) \eqqcolon \varpi,\] 
it follows 
\begin{align*} 
\big\lvert J(D) - \hat{J}_{T,p\overline{q}}(D) \big\rvert &\leq \frac{A(D)}{B(D)\hat{B}_T(D)} \big\lvert \hat{B}_T(D) - B(D) \big\rvert + \frac{\lvert \hat{A}_T(D)- A(D)\rvert}{\hat{B}_T(D)}\\ 
&\leq \frac{\lVert f \rVert_{L^1(\pi)} + \kappa \overline{\rho} \Lambda}{\varpi^2}\lVert \rho - \hat{\rho}^\ast_{T,p\overline{q}} \rVert_{L^\infty(B(0,\overline{\lambda}))} + \frac{\lVert f \rVert_{L^1(B(0,\overline{\lambda}))} + \kappa\Lambda}{\varpi} \lVert \rho - \hat{\rho}^\ast_{T,p\overline{q}} \rVert_{L^\infty(B(0,\overline{\lambda}))}.
\end{align*}
Thus, 
\begin{equation}\label{eq:uni_regret}
\begin{split}
\E^\mu\Big[\sup_{D \in \Theta(\underline{\lambda},\overline{\lambda},\Lambda)} \lvert J(D) - \hat{J}_{T,p\overline{q}}(D) \rvert^p \Big]^{1/p} &\leq C(\kappa,\underline{\lambda},\overline{\lambda},\underline{\rho},\overline{\rho},p, f) \E^\mu\big[\lVert \rho - \hat{\rho}^\ast_{T,p\overline{q}}\rVert_{L^\infty(B(0,\overline{\lambda}))}^p \big]^{1/p}\\ 
&\leq C(\kappa,\underline{\lambda},\overline{\lambda},\underline{\rho},\overline{\rho},p, f) \lVert \tfrac{\diff{\mu}}{\diff{\pi}} \rVert_{L^q(\pi)}^{1/p} \E^\pi\big[\lVert \rho - \hat{\rho}^\ast_{T,p\overline{q}}\rVert_{L^\infty(B(0,\overline{\lambda}))}^{p\overline{q}} \big]^{1/p\overline{q}}, 
\end{split}
\end{equation}
where we used Hölder inequality twice for the last line. The expectation can be bounded as follows:
\begin{align*} 
\E^\pi\big[\lVert \rho - \hat{\rho}^\ast_{T,p\overline{q}}\rVert_{L^\infty(B(0,\overline{\lambda}))}^{p\overline{q}} \big]^{1/p\overline{q}} &\leq \E^\pi\big[\lVert \rho - \hat{\rho}_{T,p\overline{q}}\rVert_\infty^{p\overline{q}} \big]^{1/p\overline{q}} + \E^\pi\big[\lVert \rho - \hat{\rho}^\ast_{T,p\overline{q}}\rVert_{L^\infty(B(0,\overline{\lambda}))}^{p\overline{q}}  \one_{\{\lVert \hat{\rho}_{T,p\overline{q}} - \rho \rVert_\infty > \underline{\rho}/2\}}\big]^{1/p\overline{q}}\\ 
&\leq \E^\pi\big[\lVert \rho - \hat{\rho}_{T,p\overline{q}}\rVert_\infty^{p\overline{q}} \big]^{1/p\overline{q}} + 2\overline{\rho}\big(\PP^\pi\big(\lVert \hat{\rho}_{T,p\overline{q}} - \rho \rVert_\infty > \underline{\rho}/2 \big)\big)^{1/p\overline{q}} \\ 
&\leq (1+4\overline{\rho}/\underline{\rho})  \E^\pi\big[\lVert \rho - \hat{\rho}_{T,p\overline{q}}\rVert_\infty^{p\overline{q}} \big]^{1/p\overline{q}} \\
&\lesssim (1+4\overline{\rho}/\underline{\rho}) \Psi_{d,\bm{\beta}}(T).
\end{align*}
For the first inequality we used that on $\{\lVert \hat{\rho}_{T,p\overline{q}} - \rho \rVert_\infty \leq \underline{\rho}/2\}$ it holds $\hat{\rho}_{T,p\overline{q}}\vert_{B(0,\overline{\lambda})} \equiv \hat{\rho}^\ast_{T,p\overline{q}}\vert_{B(0,\overline{\lambda})}$. The last two lines follow from the Markov inequality and Theorem \ref{theo:est}. Plugging this bound into \eqref{eq:uni_regret} we find 
\begin{equation}\label{eq:uni_regret2} 
\E^\mu\Big[\sup_{D \in \Theta(\underline{\lambda},\overline{\lambda},\Lambda)} \lvert J(D) - \hat{J}_{T,p\overline{q}}(D) \rvert^p \Big]^{1/p} \lesssim \lVert \tfrac{\diff{\mu}}{\diff{\pi}} \rVert_{L^q(\pi)}^{1/p} \Psi_{d,\bm{\beta}}(T).
\end{equation}
Finally, since $D^\ast \in \argmin_{D \in \Theta(\underline{\lambda},\overline{\lambda},\Lambda)} J(D)$ and $\hat{D}_{T,p\overline{q}} \in \argmin_{D \in \Theta(\underline{\lambda},\overline{\lambda},\Lambda)} \hat{J}_{T,p\overline{q}}(D)$, we have 
\[\E^\mu\big[\big\lvert J\big(D^\ast\big) - J\big(\hat{D}_{T,p\overline{q}}\big) \big\rvert^p \big]^{1/p} \leq 2 \E^\mu\Big[\sup_{D \in \Theta(\underline{\lambda},\overline{\lambda},\Lambda)} \lvert J(D) - \hat{J}_{T,p\overline{q}}(D) \rvert^p \Big]^{1/p},\] 
which in combination with \eqref{eq:uni_regret2} yields the claim.
\end{proof}

This result may be interpreted in two different ways. On the one hand, it shows  for the generic situation, where the controller has access to a separate diffusion data sample and uses it in online estimation of an optimal reflection boundary, that the regret vanishes at the nonparametric estimation rate. On the other hand, it demonstrates that a simple explore-then-commit strategy, where we first estimate an optimal set for $T$ time units and afterwards exploit by reflecting the process at the estimated boundaries, yields a regret  bounded by $\Psi_{d,\beta}(T)$. 

As a proof of concept, we apply the above methodology to simulated data.
In particular, we consider the Ornstein--Uhlenbeck process $X$ in $\R^2$ governed by
\[
	\mathrm{d}X_t
	=-\frac{X_t}{10}\, \mathrm{d}t+\sqrt{2}\,\mathrm{d}W_t,
\]
and simulate data from this model until time $T_{\mathrm{end}}$ for increasing values of $T_{\mathrm{end}}$, corresponding to increasing periods of exploration.
For each simulation, we then estimate the invariant density $\rho$ (here a normal density) via a discretized version of the kernel density estimator as well as its gradient $\nabla\rho$ via the gradient of the kernel estimator. 
Finally, setting $f=\n{\cdot}$ and $\kappa=1$, we use these estimates along with Corollary \ref{coro:twodim} to find approximately optimal star-shaped polygons, and plot them in Figure \ref{fig:kernel_estimates} along with the numerically approximated optimal shape, i.e. the one found using Corollary \ref{coro:twodim} with the true density $\rho$.
\begin{figure}[h]
        \label{fig:kernel_estimates}
	\centering
	\includegraphics[width=.55\textwidth]{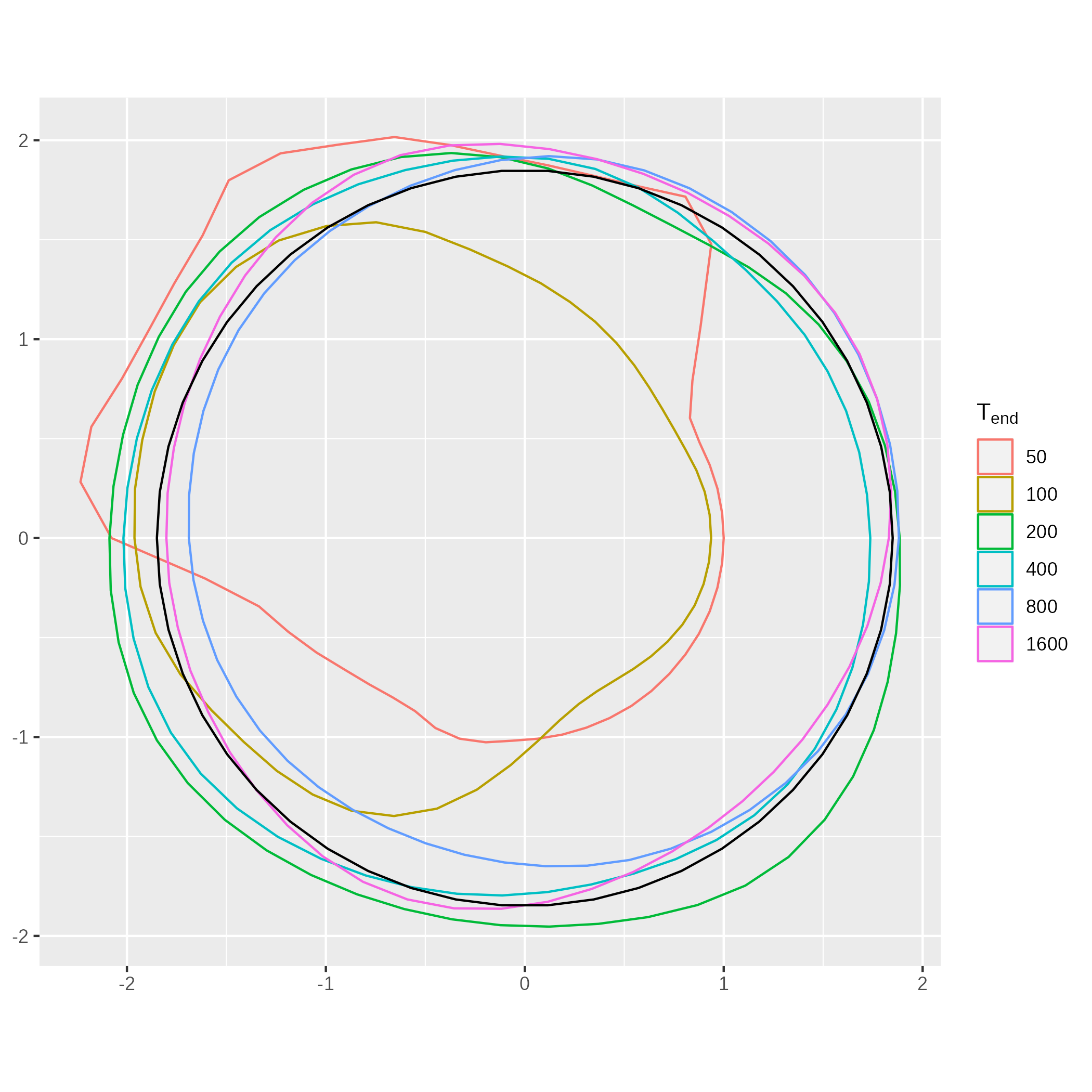}
	\caption{Left: Estimates of the optimal shape (black) using kernel estimates after increasing periods of exploration. Notably, after only $T=150$, the estimated optimal shape has an associated cost only $0.61\%$ higher than the true optimum.}
\end{figure}

\paragraph{Episodic domain learning}
Aiming now at strategies with sublinear regret rates without any simplifying assumptions on the data-collection mechanism, we face a classical exploration vs.\ exploitation dilemma in light of the necessity to simultaneously control the process and estimate its dynamics over time. The key bound from Proposition \ref{prop:stat_regret} will allow us to do so. 

Our episodic learning algorithm separates the time-line into exploration and exploitation phases. Let $T_i$ be the start of the $i$-th exploration period, where we let the diffusion run freely without reflection and $S_i$ be the start of the $i$-th exploitation period, where we reflect the process according to an estimate of the optimal reflection boundary based on past observations of the exploration process. We always start with an exploration period, i.e.\ $T_1 = 0$, and then alternate between exploration and exploitation periods.  We denote by $\tau_i = S_i - T_i$ the length of the $i$-th exploration period and by $\sigma_i = T_{i+1} -  S_i$ the length of the $i$-th exploitation period.

Contrary to the scalar diffusion case in \cite{christensen21_alt}, the multivariate diffusion does not hit points, which makes it difficult to introduce an appropriate life-cycle decomposition of the exploration process that allows for elegant renewal theoretic arguments in the analysis. Instead we choose  sequences $(a_i) \subset [1,\infty)^{\N}$, $(b_i) \subset [1,\infty)^{\N}$ and simply let $S_i = \inf\{t \geq T_i + a_i: \tilde{X}_t \in \operatorname{cl}(B(0,\underline{\lambda}))\}$ and $T_{i+1} = S_i + b_i$, where $(\tilde{X}_t)_{t \geq 0}$ denotes the process that is controlled according to the above strategy. This implies that the $i$-th exploitation length is deterministically given by $\sigma_i = b_i$ and due to the strongly recurrent behavior of the process the $i$-th exploration length is relatively close to $a_i$ with high probability. Moreover, the strategy makes sure that at the start of an exploitation period we have $\tilde{X}_{S_i} \in \operatorname{cl} B(0,\underline{\lambda}) \subset \overline{D}$ for any $D \in \Theta(\underline{\lambda},\overline{\lambda},\Lambda)$. For the estimator of the reflection boundary $\hat{D}_i$ in the $i$-th exploitation period, we only take into account the observations gathered from the last exploration period on the time interval $[T_i,T_i+a_i]$ by letting 
\[\hat{\rho}^\ast_i(x) \coloneqq (\hat{\rho}_{i,2}(x) \wedge 2 \overline{\rho}) \vee \underline{\rho}/2, \quad x \in \R^d,\]
where $\hat{\rho}_{i,2}$ is the adaptive invariant density estimator based on the diffusion data $(\tilde{X}_t)_{t \in [T_i,T_i+a_i]}$ for the parameter choice $q=2$ in the construction of the stochastic bandwidth, and then set 
\[\hat{D}_i \in \argmin_{D \in \Theta(\underline{\lambda},\overline{\lambda},\Lambda)} \frac{1}{\int_D \hat{\rho}^\ast_{i}(y) \diff{y}}\Big(\int_D f(y) \hat{\rho}^\ast_{i}(y) \diff{y} + \kappa \int_{\partial D} \hat{\rho}_{i}^\ast(y) \, \mathcal{H}^{d-1}(\diff{y}) \Big).\]

Let $\tilde{\mathbb{F}} = (\tilde{\mathcal{F}}_t)_{t \geq 0}$ be the filtration generated by the controlled process $\tilde{X}$ and set $\tilde{C}_{a,b} \coloneqq \int_a^b f(\tilde{X}_s) \diff{s} + \kappa (\tilde{L}_b - \tilde{L}_a)$ as the costs on the time interval $[a,b]$ associated to  $\tilde{X}$, where $\tilde{L}$ is the local time on the reflection boundaries during the exploitation phases and is set equal to zero during the exploration phases. We also let $C_{a,b}(x,D)$ be the costs on the time interval $[a,b]$ associated to a Langevin diffusion $Z^{x,D}$ that is driven by a Brownian motion independent of $\tilde{\mathbb{F}}$ and that is reflected in $D$ and is started in $x$. Denote by $\tau(x,D)$ its first hitting time of $\operatorname{cl}(B(0,\underline{\lambda}))$. Furthermore, we set $n(T) \coloneqq \min\{i \in \N: \sum_{j=1}^i (a_j + b_j) \geq T\}$ and note that 
\[n(T) \geq \min\Big\{i\in \N: \sum_{j=1}^{i} (\tau_j + \sigma_j) \geq T\Big\} = \min\{i\in \N: T_{i+1} \geq T\} = \min\{i\in \N: T_i \geq T\} -1  ,\]
which in particular implies that $S_{n(T)+1} > T_{n(T)+1} \geq T$.

For technical reasons, we assume that the potential $V$ satisfies the following drift condition: for some constants $r,M > 0$ it holds 
\begin{equation}\label{cond:drift}
\forall \lvert x \rvert \geq M:\,  \langle \nabla V(x), x\slash \lvert x \rvert \rangle \geq r.
\end{equation}
Due to the specific structure of the generator, it is well known, see e.g.\ \cite{bakry08}, that the Langevin diffusion then satisfies a Poincaré inequality \ref{def:poinc} and that its generator has a Lyapunov function $\mathrm{V} \geq 1$ that is locally bounded, is for some $a,R > 0$  given by $\mathrm{V}(x) =\exp(a \lvert x \rvert)$ for  all $\lvert x \rvert \geq R$, and satisfies $\pi(\mathrm{V}) < \infty$. This implies, see \cite[Theorem 5.2, Theorem 7.2]{DownMeynTweedie1995}, that $X$ is $\mathrm{V}$-uniformly ergodic in the sense that for some constant $b > 0$,
\begin{equation}\label{eq:v-erg}
\sup_{\lvert g \rvert \leq \mathrm{V}} \lvert P_tg(x) - \pi(g)\rvert \lesssim \mathrm{V}(x)\exp(-bt), \quad x \in \R^d,
\end{equation}
and that for any set $C$ s.t.\ $\lebesgue(C) > 0$, it holds 
\begin{equation}\label{eq:mod_mom} 
\E^x\Big[\int_0^{\tau_C} \mathrm{V}(X_s) \diff{s}\Big] \leq c(C) \mathrm{V}(x), \quad x \in \R^d,
\end{equation} 
where $c(C)$ is a constant depending on $C$ and $\tau_C$ is the first hitting time of $C$ (we note that a much stronger statement than \eqref{eq:mod_mom} is true, but this won't be needed in our context). We also need some assumptions on the set of viable reflection domains $\Theta(\underline{\lambda},\overline{\lambda},\Lambda)$ that allow sufficient uniform bounds in the following. More precisely, we assume  the constants $C^\prime(D)$ appearing in Theorem \ref{prop:erg_cost} to be uniformly bounded in $D$, that is, 
\begin{equation}\label{eq:uni_const}
\sup_{D \in \Theta(\underline{\lambda},\overline{\lambda},\Lambda)} C^\prime(D) < \infty.
\end{equation} 
Note that this assumption boils down to uniform lower bounds on the transition densities, cf.\ Lemma \ref{lem:erg}, and uniform bounds on the maxima of the functions $\varphi_D$ and their partial derivatives described in Lemma \ref{lem:uninormal}, i.e., certain uniform regularity assumptions on the boundaries. Moreover, we also require  a uniform upper bound on the transition densities in the form  
\begin{equation}\label{eq:uni_trans}
\sup_{D \in \Theta(\underline{\lambda},\overline{\lambda},\Lambda)} \sup_{t \geq 1, x,y \in \overline{D}} p^D_t(x,y) < \infty.
\end{equation}
By continuity of $p_t^D$, it follows from the Gaussian upper bound on $p^D_t(x,y)$ given in \cite[Corollary 6.15]{ouh05} for a.e.\ $(x,y) \in D^2$ that this holds pointwise, i.e., $\sup_{t \geq 1, x,y \in \overline{D}} p^D_t(x,y) < \infty$ for any $D \in \Theta(\underline{\lambda},\overline{\lambda},\Lambda)$.  Verifying \eqref{eq:uni_const} and \eqref{eq:uni_trans} is highly problem specific and is a difficult task with the tools available in the literature when $\Theta(\underline{\lambda},\overline{\lambda},\Lambda)$ is infinite. Still, these assumptions are not unreasonable provided that   appropriate uniform regularity conditions on the boundaries are in force. With this technical preparation we can prove our final theorem.

\begin{theorem} \label{theo:expl}
Suppose that the non-reflected Langevin diffusion satisfies \eqref{cond:drift} and  \ref{def:nash} and that its invariant density satisfies $\rho_V \in \mathcal{H}_d(\bm{\beta} + \one,\bm{\mathcal{L}})$. Assume also that $f \in L^2(\pi)$, $f \lesssim \mathrm{V}$,  that  the initial distribution of the first exploration phase $\mu$ satisfies $\mu \ll \pi$ and $\diff\mu/\diff\pi \in L^2(\pi)$ and that \eqref{eq:uni_const} and \eqref{eq:uni_trans} hold. Then, the average regret per time unit is bounded by 
\[\frac{1}{T}\E[\tilde{C}_{0,T}] - J(D^\ast) \lesssim \frac{1}{T} \Big(\sum_{i=1}^{n(T)} a_i + \sum_{i=1}^{n(T)} b_i \Psi_{d,\bm{\beta}}(a_i)\Big).\]
\end{theorem}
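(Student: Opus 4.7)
The plan is a phase-by-phase decomposition of $\E[\tilde{C}_{0,T}]$: write
\[\E[\tilde{C}_{0,T}] = \sum_{i=1}^{n(T)} \E[\tilde{C}_{T_i,S_i}] + \sum_{i=1}^{n(T)} \E[\tilde{C}_{S_i,T_{i+1}}] + R_T,\]
where $R_T$ accounts for the possibly incomplete final phase and will be bounded by a constant via the same arguments as for a complete phase. Exploration and exploitation contributions are then bounded separately.

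For the exploration phase $i$, no local time accrues and the process is a free Langevin diffusion. Using the strong Markov property at $T_i + a_i$, I would decompose the integrated running cost as $\int_0^{a_i} f(X_s)\diff{s} + \int_0^{\tau(X_{a_i},\overline{B(0,\underline{\lambda})})} f(X_s)\diff{s}$. Since $f \lesssim \mathrm{V}$, the first piece is bounded by $a_i \pi(f) + C\mathrm{V}(\tilde{X}_{T_i})$ via integrating the $\mathrm{V}$-ergodicity bound \eqref{eq:v-erg}, and the second by $c\,\E[\mathrm{V}(X_{a_i})]$ via the modulated moment bound \eqref{eq:mod_mom}. Since $\tilde{X}_{T_i} \in \overline{B(0,\overline{\lambda})}$ for $i \geq 2$ and $\diff\mu/\diff\pi \in L^2(\pi)$ for $i=1$, $\E[\mathrm{V}(\tilde{X}_{T_i})]$ is uniformly bounded, giving $\sum_i \E[\tilde{C}_{T_i,S_i}] \lesssim \sum_i a_i + n(T)$; since $a_i \geq 1$, the $n(T)$ piece is absorbed into $\sum_i a_i$.

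For the exploitation phase $i$, conditional on $\mathcal{F}_{S_i}$ the process is reflected in $\hat{D}_i$ and starts at $\tilde{X}_{S_i} \in \overline{B(0,\underline{\lambda})}$. The bound \eqref{eq:exp_conv} together with the uniformity assumption \eqref{eq:uni_const} yields
\[\lvert \E[\tilde{C}_{S_i,T_{i+1}} \mid \mathcal{F}_{S_i}, \hat{D}_i] - b_i J(\hat{D}_i) \rvert \leq C^\prime,\]
so $\E[\tilde{C}_{S_i,T_{i+1}}] \leq b_i J(D^\ast) + b_i\,\E[J(\hat{D}_i) - J(D^\ast)] + C^\prime$. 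The middle term is bounded via Proposition \ref{prop:stat_regret} applied with $p=1$, $q=2$ to the exploration record $(\tilde{X}_t)_{t \in [T_i,T_i+a_i]}$, which yields $\E[J(\hat{D}_i) - J(D^\ast)] \lesssim \Psi_{d,\bm{\beta}}(a_i)$.

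The hard part will be justifying this last application of Proposition \ref{prop:stat_regret}: it requires $\diff\mu_i/\diff\pi \in L^2(\pi)$ where $\mu_i$ is the law of $\tilde{X}_{T_i}$, but for $i \geq 2$ that law is an arbitrary distribution supported on $\overline{B(0,\overline{\lambda})}$ which need not even be absolutely continuous with respect to $\pi$. I would resolve this by discarding the first unit of data in each exploration phase: ultracontractivity following from the Nash inequality \ref{def:nash} implies that the law of $\tilde{X}_{T_i+1}$ admits a density uniformly bounded in $i$, so its Radon--Nikodym derivative with respect to $\pi$ lies in $L^\infty(\pi) \subset L^2(\pi)$ with norm independent of $i$. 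The one-unit shift reduces the effective estimation window from $a_i$ to $a_i-1$ and adds a bounded constant per phase, both absorbed into $\sum_i a_i$. Combining everything and using $\sum_i b_i \leq T$ gives
\[\E[\tilde{C}_{0,T}] \leq J(D^\ast) T + C_1 \sum_{i=1}^{n(T)} a_i + C_2 \sum_{i=1}^{n(T)} b_i \Psi_{d,\bm{\beta}}(a_i),\]
and dividing by $T$ yields the claim.
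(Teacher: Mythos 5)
Your decomposition and the overall flow mirror the paper's proof, but two of the key technical steps are handled by genuinely different arguments. The exploration term: the paper controls $\E[C_{0,a_i}(\tilde{X}_{T_i},\R^d)]-a_i\pi(f)$ via the Poincar\'e inequality and the $L^2(\pi)$ ergodic theorem of Cattiaux et al., obtaining a $\sqrt{a_i}$ deviation; you instead integrate the $\mathrm{V}$-uniform ergodicity bound \eqref{eq:v-erg} against $f\lesssim\mathrm{V}$, giving a constant-order deviation proportional to $\mathrm{V}(\tilde{X}_{T_i})$. Both approaches land at $\lesssim\sum_i a_i$. Yours avoids the $L^2$-CLT machinery but commits the proof to $\E^\mu[\mathrm{V}]<\infty$ for $i=1$, which is not quite implied by $\diff\mu/\diff\pi\in L^2(\pi)$ unless one checks $\mathrm{V}\in L^2(\pi)$ (achievable by taking the exponent $a$ in $\mathrm{V}(x)=\e^{a\lvert x\rvert}$ small enough relative to $r$ in \eqref{cond:drift}, but you don't say so). The paper has a kindred blemish at $i=1$ for the return-time term, so this is minor for both, yet worth flagging.

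The more interesting divergence is the Radon--Nikodym control that licenses the use of Proposition~\ref{prop:stat_regret} in each episode. The paper bounds $\diff\PP(\tilde{X}_{T_i}\in\cdot)/\diff\pi$ on $B(0,\overline{\lambda})$ via the assumed uniform upper bound \eqref{eq:uni_trans} on the \emph{reflected} transition densities $p^D_t$. You instead exploit ultracontractivity of the \emph{unreflected} semigroup (coming from the Nash inequality \ref{def:nash}) applied after a one-unit burn-in, so that the estimation record runs over $[T_i+1,T_i+a_i]$ and the relevant initial law is that of $\tilde{X}_{T_i+1}$, which has a bounded $L^\infty(\pi)$ density. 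This is a nice observation: it removes any reliance on \eqref{eq:uni_trans}, which the paper itself flags as difficult to verify uniformly over an infinite family of domains. The price is that you are analyzing a (trivially) modified algorithm that discards the first unit of each exploration window; you should say explicitly that this changes the estimator $\hat{D}_i$ and that $\Psi_{d,\bm{\beta}}(a_i-1)\asymp\Psi_{d,\bm{\beta}}(a_i)$, so the regret rate is unaffected. With those two small points addressed, the argument is complete and arguably cleaner where it departs from the paper.
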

\begin{proof}
Without loss of generality, let $T \geq 1$. Using that the costs are nonnegative, we have  
\begin{equation} \label{eq:decomp}
\begin{split}
\E[\tilde{C}_{0,T}] &= \E\Big[\sum_{T_i \leq T} \tilde{C}_{T_i, S_i \wedge T}\Big] + \E\Big[\sum_{S_i \leq T} \tilde{C}_{S_i, T_{i+1} \wedge T}\Big]\\ 
&\leq \sum_{i=1}^{n(T)} \E[\tilde{C}_{T_i,S_i}] + \sum_{i=1}^{n(T)} \E[\tilde{C}_{S_i \wedge T,T_{i+1}\wedge T}] \\
&\leq \sum_{i=1}^{n(T)} \big(\E\big[C_{0,a_i}(\tilde{X}_{T_{i}}, \R^d)\big] + \E\big[C_{0,\tau(\tilde{X}_{T_i +a_i},\R^d)}(\tilde{X}_{T_{i}+a_i}, \R^d)\big]\big) + \sum_{i=1}^{n(T)-1} \E\big[C_{0,b_i}(\tilde{X}_{S_{i}}, \hat{D}_i)\big] \\ 
&\qquad + \E\big[C_{0,(T-S_{n(T)}) \vee 1}(\tilde{X}_{S_{n(T)}}, \hat{D}_{n(T)})\big].
\end{split}
\end{equation}
We start by bounding the second term, associated to the exploitation periods until the $(n(T)-1)$-th episode. By conditioning on $\tilde{\mathcal{F}}_{S_i}$ we see that 
\begin{equation}\label{eq:exploit0}
\begin{split}
&\Big\lvert \E\big[C_{0,b_i}(\tilde{X}_{S_{i}}, \hat{D}_i)\big] - b_i \E[J(\hat{D}_i) \big] \Big\rvert\\
&\,\leq b_i   \int_{\Theta(\underline{\lambda},\overline{\lambda},\Lambda)} \int_{\operatorname{cl} B(0,\underline{\lambda})} \Big\lvert\E^x\Big[ \frac{1}{b_i}\int_0^{b_i} f(X_s^D) \diff{s} + \kappa L^D_{b_i} - J(D) \Big]  \Big\rvert \,\PP(\tilde{X}_{S_i} \in \diff{x}, \hat{D}_i \in \diff D)  \\
&\,\leq \sup_{D \in \Theta(\underline{\lambda},\overline{\lambda},\Lambda)} C^\prime(D) < \infty, 
\end{split}
\end{equation} 
where the last two lines follow from Theorem \ref{prop:erg_cost} and \eqref{eq:uni_const}. Observe now that on the previous data collection interval $[T_i, T_i+a_i]$ the process $\tilde{X}$ is equal in law to the Langevin diffusion with potential $V$ started according to the law $\PP_{\tilde{X}_{T_i}}$. For $i=1$, the latter has, by assumption on $\mu$, a Radon--Nikodym derivative w.r.t.\ the invariant distribution $\pi$ that lies in $L^2(\pi)$. Extend the transition densities $p_t^D$ from $\overline{D}^2$ to $\R^d \times \R^d$ by setting $p_t^D(x,y) = 0$ for $x,y \notin \overline{D}$. For $i \geq 2$, we then observe that  for any $y \in \operatorname{cl}(B(0,\overline{\lambda}))$ we have
\begin{align*} 
\frac{\diff{\PP(\tilde{X}_{T_i}\in \cdot)}}{\diff{\lebesgue}}(y) &=  \int_{\Theta(\underline{\lambda},\overline{\lambda},\Lambda)} \int_{\operatorname{cl} B(0,\underline{\lambda})} p^D_{b_i}(x,y) \, \PP(\tilde{X}_{S_{i-1}} \in \diff{x}, \hat{D}_i \in \diff{D})\\ 
&\leq \sup_{D \in \Theta(\underline{\lambda},\overline{\lambda},\Lambda)} \sup_{t \geq 1, (x,y) \in \overline{D}^2} p^D_t(x,y) < \infty,
\end{align*}
where we used \eqref{eq:uni_trans}. 
Since the Lebesgue density of $\pi$ is bounded away from zero on $B(0,\overline{\lambda})$, this now implies 
\begin{equation}\label{eq:unif_initial}
\sup_{i \in \N}\big\lVert \tfrac{\diff{\PP(\tilde{X}_{T_i} \in \cdot)}}{\diff{\pi}} \big\rVert_{L^2(\pi)} < \infty.
\end{equation}
Proposition \ref{prop:stat_regret} therefore yields that 
\[\E\big[\big\lvert J(D^\ast) - J (\hat{D}_i)\big\rvert\big] \lesssim \Psi_{d,\bm{\beta}}(a_i).\] 
In summary, the above estimates and the triangle inequality therefore yield 
\begin{equation} \label{eq:bound_exploit}
\begin{split}
\sum_{i=1}^{n(T)-1} \E\big[C_{0,b_i}(\tilde{X}_{S_{i}}, \hat{D}_i)\big] &\lesssim n(T) +  \sum_{i=1}^{n(T)-1} b_i (\Psi_{d,\bm{\beta}}(a_i) + J(D^\ast)).
\end{split}
\end{equation}
We turn to the last summand in \eqref{eq:decomp}. Similarly to \eqref{eq:exploit0} we see that 
\begin{equation}\label{eq:exploit1}
\begin{split}
&\Big\lvert \E\big[C_{0,(T- S_{n(T)}\vee 1}(\tilde{X}_{S_{n(T)}}, \hat{D}_{n(T)})\big] - \E\big[((T- S_{n(T)})\vee 1)J(\hat{D}_{n(T)}) \big] \Big\rvert \leq \sup_{D \in \Theta(\underline{\lambda},\overline{\lambda},\Lambda)} C^\prime(D) < \infty, 
\end{split}
\end{equation} 
and using \eqref{eq:unif_initial} and Proposition \ref{prop:stat_regret} we have 
\begin{equation}\label{eq:exploit2}
\begin{split}
\E\big[((T-S_{n(T)} \vee 1)\big\lvert J(D^\ast) - J (\hat{D}_{n(T)})\big\rvert\big]&\leq b_{n(T)} \E\big[\big\lvert J(D^\ast) - J (\hat{D}_{n(T)})\big\rvert\big]
\lesssim b_{n(T)} \Psi_{d,\bm{\beta}}(a_{n(T)}).
\end{split}
\end{equation}
Combining \eqref{eq:bound_exploit}, \eqref{eq:exploit1} and \eqref{eq:exploit2} we finally arrive at 
\begin{equation}\label{eq:exploit3}
\begin{split}
&\sum_{i=1}^{n(T)-1} \E\big[C_{0,b_i}(\tilde{X}_{S_{i}}, \hat{D}_i)\big]  
+ \E\big[C_{0,(T-S_{n(T)}) \vee 1}(\tilde{X}_{S_{n(T)}}, \hat{D}_{n(T)})\big] \\
&\, \leq n(T) + \sum_{i=1}^{n(T)} b_i \Psi_{d,\bm{\beta}}(a_{n(T)}) + J(D^\ast)\Big(\sum_{i=1}^{n(T)-1} b_i + \E[(T-S_{n(T)})\vee 1]\Big) \\ 
&\leq n(T) + TJ(D^\ast) + \sum_{i=1}^{n(T)} b_i \Psi_{d,\bm{\beta}}(a_i).
\end{split}
\end{equation} 
Let us now treat the exploration periods. Recall that since the unreflected diffusion satisfies a Poincaré inequality, we have exponentially fast convergence of its semigroup in $L^2(\pi)$. Since moreover $f\in L^2(\pi)$, the combined statements of Theorem 3.1 and Corollary 3.2 in \cite{cattiaux12} imply that for $\tilde{f} \coloneqq f -\mu(f)$ and $g \coloneqq  -\int_0^\infty P_s \tilde{f} \diff{s}$, we have $g \in \mathcal{D}(\mathrm{L})$ and 
\[\Big\lVert \int_0^t (f - \pi(f))(X_s) \diff{s} \Big\rVert_{L^2(\PP^\pi)}^2 \leq C t\lVert \sqrt{-\mathrm{L}} g \rVert_\pi^2, \]
for some constant $C$ that is independent of $t \geq 0$.
Using \eqref{eq:unif_initial} and Hölder inequality therefore implies 
\begin{align*} 
\Big\lvert \E\big[C_{0,a_i}(\tilde{X}_{T_{i}}, \R^d)\big] - a_i\pi(f) \Big\rvert &= \Big\lvert\E\Big[\E^{\tilde{X}_{T_i}}\Big[\int_0^{a_i} (f - \pi(f))(X_s)\diff{s} \Big] \Big]  \Big\rvert\\ 
&\leq \int_{\R^d} \E^x\Big[\Big\lvert \int_0^{a_i} (f - \pi(f))(X_s)\diff{s} \Big\rvert \Big] \frac{\diff{\PP(\tilde{X}_{T_i} \in \cdot)}}{\diff{\pi}}(x)  \, \pi(\diff{x})\\ 
&\leq \Big\lVert \frac{\diff{\PP(\tilde{X}_{T_i} \in \cdot)}}{\diff{\pi}} \Big\rVert_{L^2(\pi)} \Big\lVert \int_0^{a_i} (f - \pi(f))(X_s) \diff{s} \Big\rVert_{L^2(\PP^\pi)}\\
&\leq C^\prime \sqrt{a_i}.
\end{align*}
It therefore follows by the triangle inequality that 
\begin{equation}\label{eq:bound_explore1}
\sum_{i=1}^{n(T)} \E\big[C_{0,a_i}(\tilde{X}_{T_{i}}, \R^d)\big] \lesssim \sum_{i=1}^{n(T)} a_i.
\end{equation}
Furthermore, using  \eqref{eq:mod_mom} and the assumption $f \lesssim \mathrm{V}$, we can write 
\[\E\big[C_{0,\tau(\tilde{X}_{T_i +a_i},\R^d)}(\tilde{X}_{T_{i}+a_i}, \R^d)\big] = \int_{\R^d} \E^x\Big[\int_0^{\tau_{\operatorname{cl}(B(0,\underline{\lambda}))}} f(X_s) \diff{s} \Big] \,\PP(\tilde{X}_{T_i+a_i} \in \diff{x}) \lesssim \E\big[\mathrm{V}(\tilde{X}_{T_i+a_i})\big].\]
Now since $\tilde{X}_{T_i+a_i}$ has the same law as $X_{a_i}$ with $X$ started according to the law $\PP_{\tilde{X}_{T_i}}$ which, by construction, is supported on $\operatorname{cl}(B(0,\overline{\lambda}))$, it follows from \eqref{eq:v-erg} that 
\begin{align*}
\E\big[\mathrm{V}(\tilde{X}_{T_i+a_i})\big] &\leq \sup_{x \in \operatorname{cl}(B(0,\overline{\lambda}))} \E^x[\mathrm{V}(X_{a_i})] \leq \pi(\mathrm{V}) + \sup_{x \in \operatorname{cl}(B(0,\overline{\lambda}))} \lvert P_{a_i} \mathrm{V}(x) - \pi(\mathrm{V}) \rvert\\ 
&\leq \pi(\mathrm{V}) + C\lVert \mathrm{V} \rVert_{L^\infty(\operatorname{cl}(B(0,\overline{\lambda})))} < \infty, 
\end{align*}
for some constant independent of $i \in \N$. This allows us to conclude that 
\begin{equation} \label{eq:bound_explore2}
\sum_{i=1}^{n(T)}\E\big[C_{0,\tau(\tilde{X}_{T_i +a_i},\R^d)}(\tilde{X}_{T_{i}+a_i}, \R^d)\big] \leq \big(\pi(\mathrm{V}) + C\lVert \mathrm{V}\rVert_{L^\infty(\operatorname{cl}(B(0,\overline{\lambda})))}\big)n(T).
\end{equation}
Taking together the bounds \eqref{eq:exploit3}, \eqref{eq:bound_explore1} and \eqref{eq:bound_explore2} now shows that the average regret per time unit is bounded by
\[\frac{1}{T}\E[\tilde{C}_{0,T}] - J(D^\ast) \lesssim \frac{1}{T} \Big(\sum_{i=1}^{n(T)} a_i + \sum_{i=1}^{n(T)} b_i \Psi_{d,\bm{\beta}}(a_i)\Big).\]
\end{proof}

As a consequence, we obtain the following explicit rates for a strategy that doubles exploration times and chooses subsequent exploitation times inverse proportionally to the nonparametric estimation rate.
\begin{corollary}\label{coro:expl}
Let $a_i=2^i$ and $b_i = a_i/\Psi_{d,\bm{\beta}}(a_i)$ for $i \in \N$. Given the assumptions of Theorem \ref{theo:expl} it holds 
\[\frac{1}{T}\E\big[\tilde{C}_{0,T}\big] - J(D^\ast)  \lesssim \begin{cases} \big(\frac{(\log T)^2}{T}\big)^{\frac13}, &d= 2,\\ \big(\frac{\log T}{T}\big)^{\frac{\overline{\bm{\beta} + \bm{1}}}{3\overline{\bm{\beta} + \bm{1}} + d-2}}, &d\geq 3,\end{cases}\]
for the corresponding reflection strategy with exploration periods of length $a_i$ and exploitation periods of length $b_i$.
\end{corollary}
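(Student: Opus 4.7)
The plan is to feed the specific schedule $a_i = 2^i$, $b_i = a_i/\Psi_{d,\bm{\beta}}(a_i)$ into the generic regret bound of Theorem \ref{theo:expl} and then perform an asymptotic inversion to express the right-hand side purely as a function of the budget $T$. A first observation I would make is that, because $b_i\Psi_{d,\bm{\beta}}(a_i)=a_i$ by construction, the two sums appearing in Theorem \ref{theo:expl} coincide, so that the regret is controlled by $2T^{-1}\sum_{i=1}^{n(T)} a_i$. Since $(a_i)$ is geometric with ratio $2$, this sum is dominated by its last term, and the entire problem reduces to estimating $a_{n(T)}$ in terms of $T$, so that the regret per time unit is of order $a_{n(T)}/T$.

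Second, I would extract an implicit equation for $a_{n(T)}$ from the definition of $n(T)$. By construction we have $\sum_{j=1}^{n(T)-1}(a_j+b_j) < T \leq \sum_{j=1}^{n(T)}(a_j+b_j)$, and it needs to be checked that $(b_i)_{i\in\N}$ is likewise geometric with ratio $\gamma_\ast > 1$, dominating $(a_i)$; this follows because $\Psi_{d,\bm{\beta}}(a_i)$ decays only polylogarithmically fast in $a_i$ while $a_i$ itself grows exponentially, so that $b_{i+1}/b_i$ tends to a constant strictly greater than one. Consequently the partial sums $\sum_{j=1}^i(a_j+b_j)$ are governed by their last summand $b_i$, and I obtain $b_{n(T)} \asymp T$, i.e.\ the implicit relation $a_{n(T)} \asymp T\,\Psi_{d,\bm{\beta}}(a_{n(T)})$.

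Third, I would invert this implicit relation case-by-case. For $d=2$, the equation $a \asymp T\log(a)/\sqrt{a}$ rearranges to $a^{3/2}\asymp T\log a$; using the crude polynomial bound $a_{n(T)}\lesssim T$ to replace $\log a_{n(T)}$ by $\log T$ yields $a_{n(T)} \asymp (T\log T)^{2/3}$, hence the claimed rate $a_{n(T)}/T \asymp ((\log T)^2/T)^{1/3}$. For $d\geq 3$, writing $\gamma \coloneqq \overline{\bm{\beta}+\one}/(2\overline{\bm{\beta}+\one}+d-2)$, the analogous equation becomes $a^{1+\gamma}\asymp T(\log a)^\gamma$, and the same substitution gives $a_{n(T)} \asymp (T(\log T)^\gamma)^{1/(1+\gamma)}$, so that $a_{n(T)}/T \asymp (\log T/T)^{\gamma/(1+\gamma)}$. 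A direct simplification of $\gamma/(1+\gamma)$ reproduces the exponent $\overline{\bm{\beta}+\one}/(3\overline{\bm{\beta}+\one}+d-2)$ in the statement.

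The main technical delicacy is the rigorous replacement $\log a_{n(T)}\asymp \log T$ in the implicit inversion: one has to sandwich $a_{n(T)}$ between two polynomial-in-$T$ expressions first, and then iterate the crude bound once to absorb the logarithm into a universal constant. This is a routine bootstrap but must be written out with care since the implicit equation features $\log a$ on both sides at intermediate stages. Everything else --- the reduction via Theorem \ref{theo:expl}, the geometric-growth bookkeeping, and the final algebraic simplification of the exponent --- is essentially mechanical.
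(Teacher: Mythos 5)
Your proposal is correct and takes essentially the same route as the paper: reduce both sums to the dominant last (geometric) term so that the bound becomes $a_{n(T)}/T$, relate $n(T)$ to $T$ through the episode-length sum, and asymptotically invert. The only presentational difference is that the paper parameterizes directly by the episode index $n$ — writing $T\sim 2^{n(\alpha+1)}/n^{\alpha}$ and then observing that $(n2^{-n})^{\alpha}=(n^{\alpha+1}2^{-n(\alpha+1)})^{\alpha/(\alpha+1)}\sim(n/T)^{\alpha/(\alpha+1)}$ with $n\sim\log T$ — which sidesteps the explicit implicit-equation bootstrap you describe: because the exponential $2^{n(\alpha+1)}$ visibly dominates the polynomial $n^{\alpha}$, the substitution $n\sim\log T$ is immediate rather than requiring a two-step sandwich in $a_{n(T)}$. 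This is a change of variables ($a_{n}=2^{n}$), not a different argument, and your version is equally valid; just be aware that the bootstrap you flag as delicate essentially dissolves if you work in $n$ rather than $a_{n(T)}$.
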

\begin{proof} 
Let $d \geq 3$ and denote $\alpha \coloneqq \overline{\bm{\beta} + \bm{1}}/(2\overline{\bm{\beta} + \bm{1}} + d-2)$. Let $T \in [\sum_{i=1}^n (a_i+b_i), \sum_{i=1}^{n+1} (a_i+b_i)]$. Then, $T \sim 2^{n(\alpha+1)}/n^\alpha$ and 
\[\sum_{i=1}^{n(T)} a_i + \sum_{i=1}^{n(T)} b_i\Psi_{d,\bm{\beta}}(a_i) \leq 2\sum_{i=1}^{n} a_i \sim 2^n,\]
which, by Theorem \ref{theo:expl} yields 
\[\frac{1}{T}\E[\tilde{C}_{0,T}] - J(D^\ast) \lesssim (n2^{-n})^\alpha = (n n^\alpha 2^{-n(\alpha +1)})^{\alpha/(\alpha+1)} \sim (\log T/T)^{\alpha/(\alpha+1)} = (\log T/T)^{\frac{\overline{\bm{\beta} + \bm{1}}}{3\overline{\bm{\beta} + \bm{1}} + d-2}}. \] 
The claim for $d=2$ is proved analogously.
\end{proof}
The rate loss of the strategy's regret per time unit relative to the static regret from Proposition \ref{prop:stat_regret} provides a natural analogue to the regret bounds from \cite[Theorem 2.5]{christensen21_alt} in the one-dimensional case, even though the construction of the strategies substantially differs. 
Let us also remark that \textit{doubling tricks} in the strategy design that make sure that the number of episodes at time $T$ is of order $\log T$, are commonly encountered in algorithms with verifiable optimal regret rates for undiscounted reinforcement learning problems. For instance,  the popular UCRL2 algorithm in  \cite{auer09} recomputes policies as soon as the occurrence of a state-action pair has doubled. Such strategies have the drawback of choosing suboptimal policies for arbitrarily long periods of time (here, not reflecting at all), see \cite{burnetas97}. Recently, \cite{boone23} have proposed the \textit{regret of exploration} as an appropriate measure to capture such inefficiencies. It is an interesting and challenging question for future work to adapt reflection strategies based on the nonparametric approach advocated in this paper to such finer-grained performance measures.

\begin{appendix}
\section{Remaining proofs} \label{sec:app}
\begin{proof}[Proof of Lemma \ref{lem:inv_dens}]
We show this using \cite[Theorem 1]{kang14}.
Since we consider only smooth sets $D$ and normal reflections, the set of test functions $\mathcal H$ as defined in \cite{kang14} simplifies to
\[
\mathcal H
=\{f\in \mathcal{C}^2_c(\overline{D})\oplus \R : \langle n(x), \nabla f(x)\rangle\ge0\text{ for }x\in\partial D\}.
\]
Letting $\pi(\mathrm{d}x)=\rho_D(x)\mathrm{d}x$, we note $\pi(\partial D)=0$, and thus it suffices to check
\[
\int_DAf(x)\, \pi(\diff{x})\le 0,\quad f\in \mathcal H,
\]
where $A$ is the differential operator given by 
\begin{equation}\label{eq:diff_op}
Af(x)
=-\langle \nabla V(x), \nabla f(x)\rangle+\Delta f(x),\quad f\in \mathcal{C}^2(\R^d), x\in\R^d.
\end{equation}
To this end, let $f\in \mathcal H$ be given, and we find using the divergence theorem
\begin{align*}
\int_{D}Af(x)\, \pi_D(\diff{x})
&=\int_{D} -\langle \nabla V(x),\nabla f(x)\rangle\rho_D(x)\diff{x}+\int_{D}\Delta f(x)\rho_D(x)\diff{x} \\
&=\int_D \langle\nabla f(x),\nabla \rho_D(x) \rangle \diff{x} +\int_{D}\Delta f(x)\rho_D(x)\diff{x}   \\
&=\Big(-\int_{\partial D}\langle \nabla f(x),n(x)\rangle\rho_D(x)\, \mathcal{H}^{d-1}(\diff{x})-\int_D\Delta f(x)\rho_D(x)\diff{x}\Big)\\ 
&\qquad+\int_{D}\Delta f(x)\rho_D(x)\diff{x} \\
&=-\int_{\partial D}\langle \nabla f(x), n(x)\rangle \rho_D(x)\, \mathcal{H}^{d-1}(\diff{x}),
\end{align*}
where $\mathcal H^{d-1}$ denotes the $(d-1)$-dimensional Hausdorff measure.
Since $f\in \mathcal H$, we have $\langle \nabla f, n\rangle\ge0$, and hence $\int_DAf(x)\, \pi_D(\mathrm{d}x)\le 0$ as desired.
\end{proof}

\begin{proof}[Proof of Lemma \ref{lem:erg}]
Since $p_t^D$ is continuous on $\overline{D}^2$, the process $X^D$ is a strong Feller process and, if we denote by $\lebesgue_D$ the restriction of the Lebesgue measure to $\overline{D}$, \eqref{eq:minor} ensures that  
\begin{equation} \label{eq:doeblin}
P_1^D(x,\cdot) \geq \delta \lebesgue_D, \quad x \in \overline{D}.
\end{equation} 
Consequently, the compact state space $\overline{D}$ is small and hence petite in the sense of \cite{MeynTweedie1993}.
 Using \cite[Theorem 1.1]{mt92}, it follows that $X^D$ is Harris recurrent.
 This implies that $X^D$ has a unique invariant distribution $\pi_D$, s.t.\ $\pi_D(\diff{x}) = \rho_D(x) \diff{x}$ follows from Lemma \ref{lem:inv_dens}. Moreover, the Doeblin recurrence property \eqref{eq:doeblin} implies by \cite[Theorem 16.2.4]{mt09} uniform ergodicity of the $1$-skeleton with explicit constants, that is
\[\lVert P_n^D(x,\cdot) - \pi_D \rVert_{\mathrm{TV}} \leq 2\mathrm{e}^{\log(1- \delta \lebesgue(D))  n}, \quad x \in \overline{D}.
\] 
By stationarity of $\pi_D$ it is easy to see that $\lVert P_t^D(x,\cdot) - \pi_D \rVert_{\mathrm{TV}}$ is decreasing in $t$, whence,
\[\lVert P_t^D(x,\cdot) - \pi_D \rVert_{\mathrm{TV}}\leq \lVert P_{\lfloor t \rfloor}^D(x,\cdot) - \pi_D \rVert_{\mathrm{TV}} \leq 2\mathrm{e}^{\log(1-\delta\lebesgue(D)) \lfloor t \rfloor} \leq \tfrac{2}{1-\delta\lebesgue(D)} \mathrm{e}^{\log(1-\delta\lebesgue(D)) t}.
\] 
\end{proof} 
\begin{proof}[Proof of Corollary \ref{coro:erg}]
Due to the exponential ergodicity of $X^D$, it follows from \cite[Corollary 3.3]{dex22} that 
\begin{equation}\label{eq:erg1}
\frac{1}{t}\E^{\pi_D}\Big[\Big\lvert \int_0^t (h(X_s^D) - \pi_D(h)) \diff{s}\Big\rvert\Big] \lesssim \frac{\lVert h \rVert_{L^\infty(\overline{D})}}{\sqrt{t}}.
\end{equation}
To get the result for deterministic initial condition $X^D_0 = x \in \overline{D}$, we note that 
for $t \geq 1$ and $\tilde{h} \coloneqq h - \pi_D(h)$ and $g_t(y) \coloneqq t^{-1}\E^y[\lvert \int_0^{t - \sqrt{t}} h(X_s^D) \diff{s}\rvert]$, we obtain
\begin{equation}\label{eq:erg2}
\begin{split}
&\Big\lvert \E^x\Big[\Big\lvert\frac{1}{t}\int_0^t \tilde{h}(X_s^D) \diff{s} \Big\rvert \Big] - \E^{\pi_D}\Big[\Big\lvert\frac{1}{t}\int_0^t \tilde{h}(X_s^D) \diff{s} \Big\rvert \Big]\Big\rvert\\
&\,\leq 2\lVert \tilde{h} \rVert_{L^\infty(\overline{D})} \frac{1}{\sqrt{t}} +  \Big\lvert \E^x\Big[\Big\lvert\frac{1}{t}\int_0^{t - \sqrt{t}} \tilde{h}(X_{s+\sqrt{t}}^D) \diff{s} \Big\rvert \Big] - \E^{\pi_D}\Big[\Big\lvert\frac{1}{t}\int_0^{t -\sqrt{t}} \tilde{h}(X_{s+ \sqrt{t}}^D) \diff{s} \Big\rvert \Big]\Big\rvert\\
&\,= 2\lVert \tilde{h} \rVert_{L^\infty(\overline{D})}\frac{1}{\sqrt{t}} + \big\lvert \E^x[g_t(X_{\sqrt{t}})] - \pi_D(g_t) \big\rvert \\
&\,\leq 2\lVert \tilde{h} \rVert_{L^\infty(\overline{D})}\Big(\frac{1}{\sqrt{t}} + \lVert P^D_{t^{1/2}}(x,\cdot) - \pi_D \rVert_{\mathrm{TV}}\Big)\\
&\lesssim \lVert h \rVert_{L^\infty(\overline{D})} \frac{1}{\sqrt{t}},
\end{split}
\end{equation}
where we used the Markov property and stationarity of $\pi_D$ for the second line, $\lVert g_t \rVert_\infty \leq \lVert \tilde{h} \rVert_{L^\infty(\overline{D})}$ for the third line and Lemma \ref{lem:erg} for the last line. The claim now follows from combining \eqref{eq:erg1} and \eqref{eq:erg2} with the triangle inequality.
\end{proof}
\end{appendix}

\paragraph*{Acknowledgements}
AHT is grateful for financial funding by Villum Synergy Programme, project no.\ 50099. LT gratefully acknowledges financial support of Carlsberg Foundation Young Researcher Fellowship grant CF20-0640.

\printbibliography

\end{document}